\documentclass[a4paper]{amsart}
\usepackage{a4wide}
\usepackage{times,epsfig}
\usepackage{bbm}
\usepackage{amsfonts,amscd,amsmath,amssymb,amsthm}
\usepackage{mathtools}



\DeclareMathAlphabet{\Ma}{U}{msa}{m}{n}
\DeclareMathAlphabet{\Mb}{U}{msb}{m}{n}

\DeclareMathAlphabet{\Meuf}{U}{euf}{m}{n}
\DeclareSymbolFont{ASMa}{U}{msa}{m}{n}
\DeclareSymbolFont{ASMb}{U}{msb}{m}{n}
\DeclareMathOperator{\ran}{ran}

\newcommand{\scalar}[2]{\langle#1\,,#2\rangle}
\newcommand{\scalarb}[2]{\langle#1\,,#2\rangle_{\partial \Omega}}
\newcommand{\pair}[2]{(#1\,,#2)}
\newcommand{\pairb}[2]{(#1\,,#2)_{\partial \Omega}}
\newcommand{\norm}[1]{\|#1\|}
\newcommand{\normm}[1]{|\negthinspace\|#1\|\negthinspace|}
\newcommand{\vol}{\mathrm{vol}}
\renewcommand{\H}{\mathcal{H}}

\newcommand{\D}{\mathcal{D}}
\renewcommand{\d}{\mathrm{d}}
\newcommand{\pO}{{\partial \Omega}}

\newcommand{\C}{\mathcal{C}}
\newcommand{\diff}{\mathrm{d}}

\newcommand{\1}{\mathbb{I}}
\newcommand{\bx}{\mathbf{x}}
\newcommand{\bt}{\boldsymbol\theta}

\newtheorem{theorem}{Theorem}

\newtheorem{corollary}[theorem]{Corollary}
\newtheorem{proposition}[theorem]{Proposition}
\newtheorem{definition}[theorem]{Definition}
\newtheorem{lemma}[theorem]{Lemma}
\newtheorem{example}[theorem]{Example}
\newtheorem{remark}[theorem]{Remark}

\numberwithin{equation}{section}
\numberwithin{theorem}{section}


\title[Self-adjoint Extensions of the Laplace-Beltrami operator]{
Self-adjoint extensions of the Laplace-Beltrami operator
and unitaries at the boundary
}

\date{}
\author{Alberto Ibort}
\author{Fernando Lled\'{o}}
\author{Juan Manuel P\'erez-Pardo}
\address{Department of Mathematics,
University Carlos~III, Madrid, Avda. de la Universidad 30, E-28911 Legan\'es
(Madrid), Spain
and
\phantom{r} Instituto de Ciencias
Matem\'{a}ticas (CSIC - UAM - UC3M - UCM)
}
\email{albertoi@math.uc3m.es, flledo@math.uc3m.es, jmppardo@math.uc3m.es}

\subjclass[2010]{47B25, 58J32, 58Z05, 58Z05, 47A07}
\keywords{Self-adjoint extensions, Laplace-Beltrami operator, quadratic forms, boundary conditions}

\date{\today; File: \textbf{\jobname.tex}}
\date{\today}

\thanks{The first and third name authors are partly supported by the project MTM2010-21186-C02-02
    of the spanish {\em Ministerio de Ciencia e Innovaci\'on} and QUITEMAD programme P2009 ESP-1594.
The second-named author was
partially supported by projects DGI MICIIN MTM2012-36372-C03-01 and Severo Ochoa SEV-2011-0087
of the spanish Ministry of Economy and Competition.
The third-named author was also partially supported in 2011 and 2012 by mobility grants of the
``\emph{Universidad Carlos III de Madrid}''
}

\begin{document}

\begin{abstract}
We construct in this article a class of closed semi-bounded quadratic forms
on the space of square integrable functions over a smooth Riemannian manifold
with smooth boundary. Each of these quadratic forms specifies a semi-bounded self-adjoint extension of the Laplace-Beltrami operator.
These quadratic forms are based on the  
Lagrange boundary form on the manifold
and a family of domains parametrized by a suitable class of unitary operators on the boundary that will be called admissible.
The corresponding quadratic forms are semi-bounded below and closable.
Finally, the representing operators correspond to semi-bounded self-adjoint extensions of the Laplace-Beltrami operator.
This family of extensions is compared with results existing in the literature and various examples and
applications are discussed.
\end{abstract}

\maketitle

\tableofcontents

\section{Introduction}

In this paper we construct a family of closed quadratic forms corresponding to a class of
self-adjoint extensions of the Laplace-Beltrami operator on a smooth Riemannian manifold with smooth boundary.
It is well-known that in a smooth manifold $\Omega$ with no boundary the minimal closed extension of the Laplace-Beltrami operator
$\Delta_\mathrm{min}$ is essentially self-adjoint.
However, if the manifold has a non-empty boundary $\partial \Omega$, then $\Delta_\mathrm{min}$
defines a closed and symmetric but {\em not self-adjoint} operator. Such situation is common in the study of quantum systems,
where some heuristic arguments suggest an expression for the Hamiltonian which is only symmetric.
The Laplace-Beltrami operator discussed here can be associated to free quantum systems on the manifold.
The description of such systems is not complete until a self-adjoint extension of the Laplace-Beltrami operator has been determined,
i.e.,  a Hamiltonian operator $H$. Only in this case a unitary evolution of the system is given, because of the one-to-one correspondence
between densely defined self-adjoint operators and strongly continuous one-parameter groups of unitary operators
$U_t = \exp itH$ provided by Stone's theorem. Therefore the specification of the self-adjoint extension is not just
a mathematical artifact, but an essential step in the description of the quantum mechanical system
(see, e.g., Chapter~X in \cite{reed-simon-2} for further results and motivation).

The collection of all self-adjoint extensions of a densely defined closed symmetric operator $T$ on a complex separable Hilbert
space $\mathcal{H}$
was described by von Neumann in terms of the isometries between the deficiency spaces $\mathcal{N}_\pm = \ker (T^\dagger \mp i I)$
of the operator $T$ (see, e.g., \cite{Ne29,reed-simon-2,We80}).
Unfortunately, beyond the one-dimensional case, the use of von Neumann's theorem to describe the self-adjoint
extensions of the Laplace-Beltrami operator is unfeasible. In fact, the computation of deficiency indices requires
the knowledge of the adjoint operator which is a difficult problem in itself (see \cite{behrnd-langer-10} and references therein).
Moreover, in von Neumann's classical result the use of important geometrical and physical data becomes rather indirect and for these
reasons the theory of extensions has been developed in many different ways and is still today an active research
area. The use of the Hermitian quadratic forms to address the extension problem has been one of the most
useful approaches since the pioneering work by Friedrichs (cf., \cite{Fr34}).
If $T$ is a symmetric and semi-bounded operator on the domain $\mathcal{D}(T)$, then the semi-bounded quadratic form
\begin{equation}\label{representation}
Q(\Phi) = \langle \Phi, T \Phi \rangle,  \qquad \Phi \in \mathcal{D}(T) \subset \H ,
\end{equation}
is closable and its closure is represented by a self-adjoint extension of $T$ with the same lower bound
(see, e.g., \cite{Ko99,reed-simon-2,We80}). Moreover, the domain of the closure of the quadratic
form satisfies a natural minimality condition. Kato's representation theorem provides the
characterization of closed semi-bounded quadratic forms as those that can be represented
by self-ajoint and semi-bounded operators as in \eqref{representation} (cf., \cite{kato:95,reed-simon-1,reed-simon-2}).
In the particular instance of the Laplace-Beltrami operator some of these closed
extensions on $\H=L^2(\Omega)$ are well known. The simplest examples are the quadratic forms associated to the Dirichlet and Neumann self-adjoint
extensions of the Laplacian: consider the positive and closed quadratic form
\begin{equation} \label{dirichlet}
Q(\Phi)=\norm{d\Phi}^2
\end{equation}
with domain $\D_D=\H^1_0(\Omega)$ in the Dirichlet case and domain
$\D_N=\H^1(\Omega) $ for the Neumann extension (see for instance \cite{davies:95} and references therein).
Also equivariant and Robin-type Laplacians can be naturally described in terms of closed and semi-bounded quadratic forms
(see, e.g., \cite{Grubb73,Grubb11,Kovarik-Laptev-12,lledo-post:07}). In this context the subtle relation between quadratic forms and representing
operators manifests through the fact that the form domain $\mathcal{D}(Q)$ always contains the operator domain $\mathcal{D}(T)$
of the representing operator. Therefore it is often possible to compare different form domains while the domains of the representing operators
remain unrelated. This fact allows, e.g., to develop spectral bracketing techniques in very different mathematical and physical situations
using the language of quadratic forms \cite{lledo-post:08,lledo-post:08b}.

In spite of the vast literature devoted to the subject, the determination of the
self-adjoint or, more generally, sectorial
extension of the operator and their spectral properties is still an active field of research
(see \cite{Arlinskii00,kato:95} and references therein).
Another example where the correct extension of a symmetric operator has been recently analyzed is
the case of the so called Berry's paradox when dealing with a class of Robin boundary conditions with a singular Dirichlet point \cite{Be08,Be09,Ma09}.
Hence the study of such quadratic forms is instrumental not only for the construction of a complete quantum system but for the analysis
of the spectrum of the corresponding self-adjoint Hamiltonian operators \cite{IbPer2010}.

The role of boundaries has been highlighted in the case of the study of self-adjoint extensions of formally self-adjoint differential
operators leading to the complete classification of boundary conditions by Grubb \cite{Grubb68} and to the theory of boundary triples
(see, e.g., \cite{BGP08} and references therein and \cite{behrnd-langer-07}
for the generalization to quasi-boudary triples; see also
Chapter~2 in \cite{post-lnm-12} for the description of boundary triples for quantum graphs and
\cite{post-13} for the theory of boundary pairs in the context of quadratic forms).

In a similar but slightly different direction focused on the physics of boundary dynamics
it was argued in \cite{AIM05} that self-adjoint extensions of the Bochner Laplacian are in one-to-one correspondence
with unitary operators on a Hilbert space of boundary data, the trace of the function and its normal derivative at the boundary.
Such characterization was shown to be particularly useful as it provides an explicit and easily workable description of the domain of the
corresponding self-adjoint extension by means of the condition, called in what follows boundary equation:
\begin{equation}\label{asorey1}
\varphi - i \dot\varphi = U (\varphi + i \dot\varphi) ,
\end{equation}
where $\varphi, \dot\varphi$ denote the trace at the boundary of a function
$\Phi$, i.e., $\varphi = \Phi\mid_{\partial\Omega}$ and its outward normal derivative
$\dot{\varphi} = \d\Phi (\nu )$, and $U$ is a unitary operator on the Hilbert space at the boundary
$L^2(\partial\Omega)$.
The analysis of such self-adjoint extensions by means of the corresponding quadratic forms leads (after integration by parts
on smooth functions) to the study of the quadratic form:
\begin{equation}\label{singular}
 Q(\Phi) = \norm{d\Phi}^2 - \langle \varphi, \dot\varphi \rangle .
 \end{equation}
Such quadratic form can be considered as a singular perturbation of the standard Dirichlet quadratic form \eqref{dirichlet}.
Unfortunately Koshmanenko's theorems \cite{Ko99} on closable singular perturbations of quadratic forms cannot be
directly applied to domains described by the boundary equation \eqref{asorey1} in general. Thus, in order to
characterize the domains of the self-adjoint extensions of the Laplace-Beltrami operator
determined by \eqref{asorey1} a different approach is needed.

In this article we present a self-contained analysis of the quadratic form \eqref{singular} on domains
satisfying Eq.~\eqref{asorey1}.
Their closability is proved under appropriate conditions on the unitary operator $U$ defining the extension.  Actually it is
shown that if $U$ has gap, i.e.,  if the eigenvalue $-1$ is isolated in its spectrum, and its partial Cayley transform is bounded in
the Sobolev norm $1/2$, the singularly perturbed Dirichlet quadratic form \eqref{singular} with domain determined by
condition \eqref{asorey1} is closable and semi-bounded below.
These results are obtained after a careful analysis of the domain defined by the boundary equation \eqref{asorey1}, the structure of the
radial Laplace operator defined on a collar neighborhood of the boundary, and a judiciously use of Neumann's extension of the given
quadratic form on the bulk of the manifold. As particular case these results include that all the Robin boundary conditions of the
form $\dot{\varphi}=g\varphi$\,, $g\in\C(\pO)$\,, lead to lower semi-bounded extensions of the Laplace-Beltrami operator.

The paper is organized as follows. Section~\ref{sectionNotation} is devoted to establish
basic definitions and results on quadratic forms and some technicalities on the Laplace-Beltrami operator
and Sobolev spaces in smooth manifolds with boundary. In Section~\ref{sec:class} we introduce the class of quadratic forms
whose closability and semi-boundedness will be established.
We will also specify the domains of the self-adjoint extensions
in terms of a class of maximal isotropic subspaces (cf., Theorem~\ref{teo:parametriceW} and Proposition~\ref{prop: asorey}).
The class of admissible unitary operators $U$ leading
to closable and semi-bounded quadratic forms is introduced at the end of this section paving the way to Section~\ref{sec:closable and semibounded qf},
where the main theorems proving the closability and semi-boundedness of the quadratic forms defined are discussed.
Finally, in Section~\ref{sec:examples} various families of examples with admissible unitaries at the boundary are obtained by using
several choices of values of the boundary data. For instance combining Dirichlet, Neumann and diverse identifications of subdomains of the boundary.

\section{Preliminaries: quadratic forms and the Laplace-Beltrami operator}\label{sectionNotation}

In this section we fix our notation and recall first some standard results of the theory of unbounded operators
and quadratic forms that will be useful later on. Standard references are,
e.g.,~\cite[Section~4.4]{davies:95}, \cite[Chapter~VI]{kato:95} or \cite[Section~VIII.6]{reed-simon-1}.
Then we will also introduce standard material on Riemannian manifolds with boundary,
the Laplace-Beltrami operator and the associated
Sobolev spaces. Some basic references for this part are, e.g., \cite{Ma01,Ad03,davies:95,Li72,Po81}.

\subsection{Quadratic forms and operators}\label{subsectionNotation}

\begin{definition}
Let $\D$ be a dense subspace of the Hilbert space $\H$ and denote by $Q\colon\mathcal{D}\times\mathcal{D}\to \mathbb{C}$
a sesquilinear form (anti-linear in the first entry and linear in the second entry). The quadratic form associated to $Q$
with domain $\mathcal D$ is its evaluation on the diagonal, i.e.,  $Q(\Phi):=Q(\Phi,\Phi)$,
$\Phi\in\mathcal{D}$. We say that the sesquilinear form is \textbf{Hermitian} if
\[
  Q(\Phi,\Psi)=\overline{Q(\Psi,\Phi)}\;,\quad \Phi,\Psi\in\mathcal{D}\;.
\]
The quadratic form is  \textbf{semi-bounded} if there is an $a\geq 0$ such that
\[
  Q(\Phi)\geq -a \norm{\Phi}^2\;,\; \Phi \in \mathcal{D}\;.
\]
The smallest possible value $a$ satisfying the preceding inequality is called the \textbf{lower bound} for the quadratic form $Q$. ç
In particular, if  $Q(\Phi)\geq 0$ for all $\Phi\in\mathcal{D}$ we say $Q$ is \textbf{positive}.

\end{definition}

Note that if $Q$ is semi-bounded with lower bound $a$, then $Q_a(\Phi):=Q(\Phi)+a\norm{\Phi}^2$, $\Phi\in\D$,
is positive on the same domain. We need to recall also the notions of closable and closed quadratic forms as
well as the fundamental representation theorems that relate closed semi-bounded quadratic forms with self-adjoint semi-bounded operators.

\begin{definition}
Let $Q$ be a semi-bounded quadratic form with lower bound $a\geq 0$ and dense domain $\D\subset\H$.
The quadratic form $Q$ is \textbf{closed} if $\D$ is closed with respect to the norm
\[
 \normm{\Phi}_Q:=\sqrt{Q(\Phi)+(1+a)\|\Phi\|^2}\;,\quad\Phi\in\D\;.
\]
If Q is closed and $\D_0\subset\D$ is dense with respect to the norm $\normm{\cdot}_Q$, then $\D_0$ is called a
\textbf{form core} for $Q$.
Conversely, the closed quadratic form $Q$ with domain $\D$ is called an
\textbf{extension} of the quadratic form $Q$ with domain $\D_0$. A quadratic form is said to be
\textbf{closable} if it has a closed extension.
\end{definition}

\begin{remark}$\phantom{=}$\label{Remclosable}
\begin{enumerate}
\item The norm $\normm{\cdot}_Q$ is induced by the following inner product on
the domain:
\[
 \langle\Phi,\Psi\rangle_Q:= Q(\Phi,\Psi)+(1+a)\langle\Phi,\Psi\rangle\;,\quad \Phi,\Psi\in\D\;.
\]
 \item The quadratic form $Q$ is closable iff whenever a sequence $\{\Phi_n\}_n\subset\D$ satisfies
$\norm{\Phi_n}\to 0$ and $Q(\Phi_n-\Phi_m)\to 0$, as $n,m\to\infty$, then $Q(\Phi_n)\to 0$.
 \item In general it is always possible to close $\D \subset\mathcal{H}$ with respect to the norm $\normm{\cdot}_Q$.
The quadratic form is closable iff this closure is a subspace of $\H$.
 \end{enumerate}

\end{remark}

\begin{theorem}[Kato's representation theorem]\label{fundteo}
Let $Q$ be an Hermitian, closed, semi-bounded quadratic form defined on the dense domain
$\D\subset\H$. Then it exists a unique, self-adjoint, semi-bounded operator $T$
with domain $\D(T)$ and the same lower bound such that:
\begin{enumerate}
\item $\Psi\in\mathcal{D}(T)$ iff $\Psi\in \D$ and it exists $\chi \in \H$ such that
$$Q(\Phi,\Psi)=\langle\Phi,\chi\rangle\,,\quad\forall \Phi\in\D\;.$$
In this case we write $T\Psi=\chi$.
\item $Q(\Phi,\Psi)=\langle\Phi,T\Psi\rangle$ for any $\Phi\in\D$,\;$\Psi\in\D(T)$.
\item $\D(T)$ is a core for $Q$.\\
\end{enumerate}
\end{theorem}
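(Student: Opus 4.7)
The plan is to realise the representing operator as the inverse of a bounded self-adjoint operator obtained via the Riesz representation theorem on the Hilbert space that $Q$ induces on $\D$. Since $Q$ is Hermitian, closed, and semi-bounded with lower bound $a\geq 0$, the shifted sesquilinear form
\[
\tilde Q(\Phi,\Psi):=Q(\Phi,\Psi)+(1+a)\scalar{\Phi}{\Psi}
\]
coincides with the inner product $\scalar{\cdot}{\cdot}_Q$ of Remark~\ref{Remclosable} and satisfies $\tilde Q(\Phi)\geq\norm{\Phi}^2$. Because $Q$ is closed, $(\D,\tilde Q)$ is a Hilbert space and the inclusion $\iota\colon(\D,\tilde Q)\hookrightarrow\H$ is a norm-decreasing continuous embedding with dense range. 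For each $\chi\in\H$ the anti-linear functional $\Phi\mapsto\scalar{\Phi}{\chi}$ on $(\D,\tilde Q)$ is bounded of norm at most $\norm{\chi}$, so the Riesz representation theorem supplies a unique $B\chi\in\D$ with
\[
\tilde Q(\Phi,B\chi)=\scalar{\Phi}{\chi} \qquad\text{for all } \Phi\in\D.
\]
The resulting map $B\colon\H\to\H$ is bounded with $\norm{B}\leq 1$; Hermiticity of $\tilde Q$ together with the density of $\D$ in $\H$ imply that $B$ is self-adjoint, non-negative, and injective, hence has dense range.

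Define the positive self-adjoint operator $A:=B^{-1}$ on $\D(A):=\ran B$. From $\scalar{B\chi}{\chi}=\tilde Q(B\chi)\geq\norm{B\chi}^2$ one obtains $A\geq\1$, so setting $T:=A-(1+a)\1$ with $\D(T):=\D(A)$ produces a self-adjoint operator satisfying $T\geq -a\1$, matching the lower bound of $Q$. Property (ii) is immediate: if $\Psi=B\chi\in\D(T)$ then $\tilde Q(\Phi,\Psi)=\scalar{\Phi}{\chi}=\scalar{\Phi}{A\Psi}$, and subtracting $(1+a)\scalar{\Phi}{\Psi}$ yields $Q(\Phi,\Psi)=\scalar{\Phi}{T\Psi}$. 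For property (i), if $\Psi\in\D$ and $\chi\in\H$ satisfy $Q(\Phi,\Psi)=\scalar{\Phi}{\chi}$ for every $\Phi\in\D$, then $\tilde Q(\Phi,\Psi)=\scalar{\Phi}{\chi+(1+a)\Psi}=\tilde Q(\Phi,B(\chi+(1+a)\Psi))$; positive definiteness of $\tilde Q$ forces $\Psi=B(\chi+(1+a)\Psi)\in\ran B=\D(T)$ and consequently $T\Psi=\chi$. Uniqueness of $T$ follows at once because (i) intrinsically determines both $\D(T)$ and the correspondence $\Psi\mapsto T\Psi$.

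The delicate step, which I expect to be the main obstacle, is property (iii): that $\D(T)=\ran B$ is a core for $Q$, equivalently that $\ran B$ is dense in the Hilbert space $(\D,\tilde Q)$. Suppose $\Psi\in\D$ is $\tilde Q$-orthogonal to $\ran B$, so that $\tilde Q(\Psi,B\chi)=0$ for every $\chi\in\H$. By the defining identity of $B$ this rewrites as $\scalar{\Psi}{\chi}=0$ for every $\chi\in\H$, forcing $\Psi=0$. This orthogonality argument, which crucially uses that the embedding $\iota$ is continuous and that $B$ is built precisely from the pairing between the two inner products, is the conceptual heart of the theorem and is exactly what will justify the quadratic-form approach to self-adjoint extensions pursued throughout the rest of the paper.
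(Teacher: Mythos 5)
Your proof is correct. Note that the paper does not prove this theorem at all: it is quoted as a known result with references to Kato and Reed--Simon, so there is no in-paper argument to compare against. Your construction --- passing to the shifted form $\tilde Q$, using closedness to make $(\D,\tilde Q)$ a Hilbert space, invoking Riesz to produce the bounded non-negative injective operator $B$, and setting $T=B^{-1}-(1+a)\1$ --- is precisely the standard Friedrichs/Lax--Milgram proof found in those references, and all the steps (self-adjointness of $B$, the characterization of $\D(T)$ in (i), and the orthogonality argument for the core property (iii)) are sound. The only point you gloss over is the claim that $T$ has \emph{the same} lower bound as $Q$: from $A\geq\1$ you only get $T\geq -a\1$, and to conclude that the bottom of the spectrum of $T$ is exactly the lower bound of $Q$ you need to combine property (ii) with property (iii), since the form core property guarantees that $\inf_{\Psi\in\D(T)}\langle\Psi,T\Psi\rangle/\norm{\Psi}^2$ coincides with $\inf_{\Psi\in\D}Q(\Psi)/\norm{\Psi}^2$. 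This follows from what you have already established, so it is an omission of a sentence rather than a gap.
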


One of the most common uses of the representation theorem is to obtain self-adjoint extensions of symmetric, semi-bounded operators.
Given a semi-bounded, closed and symmetric operator $T$ one can consider the associated quadratic form
$$Q_T(\Phi,\Psi)=\scalar{\Phi}{T\Psi}\quad \Phi,\Psi\in\D(T)\;.$$ These quadratic forms are always closable,
cf., \cite[Theorem X.23]{reed-simon-2}, and therefore their closure is associated to a unique self-adjoint operator.
Even if the symmetric operator has infinite possible self-adjoint extensions, the representation theorem allows to select a particular one.
This extension is called the Friedrichs extension. The approach that we shall take in this article is close to this method.\\

\subsection{Scales of Hilbert Spaces}

Later on we will need the theory of scales of Hilbert spaces, also known as theory of rigged Hilbert spaces.
In the following paragraph we state the main results, (see, e.g., \cite{Be68,Ko99} for proofs and more results).

Let $\H$ be a Hilbert space with scalar product $\scalar{\cdot}{\cdot}$ and induced norm $\norm{\cdot}$. Let $\H_+$ be a dense linear
subspace of $\H$ which is a complete Hilbert space with respect to another scalar product that will be denoted by $\scalar{\cdot}{\cdot}_+$.
The corresponding norm is $\norm{\cdot}_+$ and we assume that
\begin{equation}\label{inclusion inequality}
	\norm{\Phi}\leq\norm{\Phi}_+\;,\quad \Phi\in\H_+\;.
\end{equation}

Any vector $\Phi\in\H$ generates a continuous linear functional $L_\Phi\colon\H_+\to \mathbb{C}$  as follows. For $\Psi\in\H_+$ define
\begin{equation}
	L_{\Phi}(\Psi)=\scalar{\Phi}{\Psi}\;.
\end{equation}
Continuity follows by the Cauchy-Schwartz inequality and Eq.\ \eqref{inclusion inequality}.
\begin{equation}
L_{\Phi}(\Psi) \leq \norm{\Phi}\cdot\norm{\Psi}\leq\norm{\Phi}\cdot\norm{\Psi}_+\;,
                    \quad\forall \Phi\in\H\;,\forall\Psi\in\H_+\;.	
\end{equation}
Since $L_\Phi$ represents a continuous linear functional on $\H_+$ it can be represented, according to Riesz theorem,
using the scalar product in $\H_+$. Namely, it exists a vector $\xi\in\H_+$ such that
\begin{equation}
	\forall\Psi\in\H_+\,,\quad L_{\Phi}(\Psi)=\scalar{\Phi}{\Psi}=\scalar{\xi}{\Psi}_+\;,
\end{equation}
and the norm of the functional coincides with the norm in $\H_+$ of the element $\xi$, i.e.,
 $$\norm{L_\Phi}=\sup_{\Psi\in\H_+}\frac{|L_\Phi(\Psi)|}{\norm{\Psi}_+}=\norm{\xi}_+\;.$$
One can use the above equalities to define an operator
\begin{equation}
\begin{array}{c}
\hat{I}\colon \H\to\H_+\\
\hat{I}\Phi=\xi\;.
\end{array}
\end{equation}
This operator is clearly injective since $\H_+$ is a dense subset of $\H$ and therefore it can be used to define a new scalar product on $\H$
\begin{equation}
	\scalar{\cdot}{\cdot}_-:=\scalar{\hat{I}\cdot}{\hat{I}\cdot}_+\;.
\end{equation}
The completion of $\H$ with respect to this scalar product defines a new Hilbert space, $\H_-$, and the corresponding norm will be
denoted accordingly by $\norm{\cdot}_-$. It is clear that $\H_+\subset\H\subset \H_-$ with dense inclusions.
Since $\norm{\xi}_+=\norm{\hat{I}\Phi}_+=\norm{\Phi}_-$, the operator $\hat{I}$ can be extended by continuity to an isometric bijection.

\begin{definition}
The Hilbert spaces $\H_+$, $\H$ and $\H_-$ introduced above define a \textbf{scale of Hilbert spaces}. The extension by continuity of the
operator $\hat{I}$ is called the \textbf{canonical isometric bijection}. It is denoted by:
\begin{equation}
I\colon \H_-\to\H_+\;.
\end{equation}
\end{definition}

\begin{proposition}\label{proppairing}
The scalar product in $\H$ can be extended continuously to a pairing
\begin{equation}
\pair{\cdot}{\cdot}\colon\H_-\times\H_+\to\mathbb{C}\;.
\end{equation}
\end{proposition}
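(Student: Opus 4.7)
My plan is to define the pairing directly through the canonical isometric bijection $I\colon\H_-\to\H_+$ introduced just before the proposition, and then verify that the resulting sesquilinear form (i) extends $\scalar{\cdot}{\cdot}$ from $\H\times\H_+$ and (ii) is continuous with respect to the product topology on $\H_-\times\H_+$. Concretely, for $\Phi\in\H_-$ and $\Psi\in\H_+$ I would simply set
\[
\pair{\Phi}{\Psi}:=\scalar{I\Phi}{\Psi}_+ .
\]
The first step is to check that this prescription reproduces the original scalar product whenever $\Phi\in\H$. Since $I$ was constructed as the isometric extension of $\hat I$ to $\H_-$, one has $I|_\H=\hat I$; combining this with the defining identity $\scalar{\Phi}{\Psi}=\scalar{\hat I\Phi}{\Psi}_+$ that fixed $\hat I$ in the first place, the restriction of $\pair{\cdot}{\cdot}$ to $\H\times\H_+$ coincides with $\scalar{\cdot}{\cdot}$ by construction.

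Continuity is then immediate from the Cauchy--Schwarz inequality in $\H_+$ together with the isometry property of $I$:
\[
|\pair{\Phi}{\Psi}|=|\scalar{I\Phi}{\Psi}_+|\leq\norm{I\Phi}_+\,\norm{\Psi}_+=\norm{\Phi}_-\,\norm{\Psi}_+,
\]
valid for every $\Phi\in\H_-$ and $\Psi\in\H_+$. In particular the form is jointly continuous of norm at most one, and the bound is actually attained because $I$ is surjective.

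Uniqueness of this continuous extension follows from the density of $\H$ in $\H_-$: if two continuous sesquilinear forms on $\H_-\times\H_+$ agree on the dense subspace $\H\times\H_+$, then for any $\Phi\in\H_-$ one picks an approximating sequence $\Phi_n\in\H$ with $\norm{\Phi_n-\Phi}_-\to 0$ and transfers the equality to the limit, so the two forms coincide on all of $\H_-\times\H_+$.

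I do not anticipate any genuine obstacle here: the argument is essentially a repackaging of the Riesz representation theorem that was used to construct $\hat I$ and of the definition of $\H_-$ as the completion of $\H$ with respect to $\norm{\cdot}_-$. The only point worth writing out explicitly is the identification $I|_\H=\hat I$, which is exactly the content of the phrase ``extended by continuity to an isometric bijection'' in the definition preceding the proposition.
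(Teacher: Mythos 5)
Your proposal is correct and follows essentially the same route as the paper: the paper's proof consists precisely of the estimate $|\scalar{\Phi}{\Psi}|=|\scalar{I\Phi}{\Psi}_+|\leq\norm{I\Phi}_+\norm{\Psi}_+=\norm{\Phi}_-\norm{\Psi}_+$ for $\Phi\in\H$, $\Psi\in\H_+$, leaving the extension by density implicit, whereas you simply make that extension explicit by defining $\pair{\Phi}{\Psi}:=\scalar{I\Phi}{\Psi}_+$ on all of $\H_-\times\H_+$ and adding the (routine) verification of agreement on $\H$ and uniqueness. No substantive difference in method.
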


\begin{proof}
Let $\Phi\in\H$ and $\Psi\in\H_+$. Using the Cauchy-Schwartz inequality we have the following
\begin{equation}\label{CSpairing}
	|\scalar{\Phi}{\Psi}|=|\scalar{I\Phi}{\Psi}_+|\leq \norm{I\Phi}_+\norm{\Psi}_+=\norm{\Phi}_-\norm{\Psi}_+\;.
\end{equation}
\end{proof}

\subsection{Laplace-Beltrami operator on Riemannian manifolds and Sobolev spaces}

Our aim is to describe a class of closable quadratic forms related to the self-adjoint extensions of the Laplace-Beltrami operator defined
on a compact Riemannian manifold. We shall start with the definition of such manifold and of the different spaces of functions that will
appear throughout the rest of this article.

Let $(\Omega,\pO,\eta)$ be a smooth, orientable, compact, Riemannian manifold  with metric
$\eta$ and smooth boundary $\partial \Omega$.
We will denote as  $\C^\infty (\Omega)$ the space of smooth functions of the Riemannian manifold $\Omega$ and
by $\C_c^\infty (\Omega)$ the space of smooth functions with compact support in the interior of $\Omega$.
The Riemannian volume form is written as $\d\mu_\eta$.

\begin{definition}
The \textbf{Laplace-Beltrami Operator} associated to the Riemannain manifold $(\Omega,\pO,\eta)$ is the
second order differential operator $\Delta_\eta:\C^\infty(\Omega)\to\C^\infty(\Omega)$ given by
$$\Delta_\eta\Phi=\frac{1}{\sqrt{|\eta|}}\frac{\partial}{\partial x�}\sqrt{|\eta|}\eta^{ij}\frac{\partial\Phi}{\partial x^j}\;.$$
\end{definition}

Let $(\tilde{\Omega},\tilde{\eta})$ be a smooth, orientable, boundaryless, compact Riemannian manifold with metric $\tilde{\eta}$.
The Laplace-Beltrami operator $-\Delta_{\tilde{\eta}}$ associated to the Riemannian manifold $(\tilde{\Omega},\tilde{\eta})$ defines a positive,
essentially self-adjoint, second order differential operator, cf. \cite{Ma01}. One can use it to define the following norms.

\begin{definition} \label{DefSobolev}
The \textbf{Sobolev norm of order $k$} in the boundaryless Riemannian manifold $(\tilde{\Omega},\tilde{\eta})$ is defined by
$$|| \Phi ||_k^2 := \int_{\tilde{\Omega}} \overline{\Phi} (I - \Delta_{\tilde{\eta}} )^{k}\Phi \d\mu_{\tilde{\eta}}\;. $$
The closure of the smooth functions with respect to this norm $\H^k(\tilde{\Omega}) := \overline{\C^{\infty}(\tilde{\Omega})}^{\norm{\cdot}_k}$
is the \textbf{Sobolev space of class $k$} of the Riemannian manifold $(\tilde{\Omega},\tilde{\eta})$\,.
The scalar products associated to these norms are written as $\scalar{\cdot}{\cdot}_k$.
In the case $k=0$ we will denote the $\H^0(\tilde{\Omega})$ scalar product simply by
$\scalar{\Phi}{\Psi}=\int_{\tilde{\Omega}} \overline{\Phi}\Psi \d\mu_{\tilde{\eta}}$.
\end{definition}

Note that Definition~\ref{DefSobolev} holds only for Riemannian manifolds without boundary.
The construction of the Sobolev spaces of functions over a manifold $(\Omega,\pO,\eta)$
cannot be done directly like in the definition above because the Laplace-Beltrami operator does not define in general
positive differential operators. However, it is possible to construct it as a quotient of the Sobolev space of functions
over a Riemannian manifold $(\tilde{\Omega},\tilde{\eta})$ without boundary.

\begin{definition}\label{DefSobolev2}
Let $(\Omega,\pO,\eta)$ be a Riemannian manifold and let $(\tilde{\Omega},\tilde{\eta})$ be any Riemannian manifold without boundary such that
$\smash{\overset{\scriptscriptstyle\circ}{\Omega}}$, i.e., the interior of $\Omega$, is an open submanifold of
$\tilde{\Omega}$. The \textbf{Sobolev space of class k} of the Riemannain manifold $(\Omega,\pO,\eta)$ is the quotient
$$\H^k(\Omega):=\H^k(\tilde{\Omega})/\{\Phi\in\tilde{\Omega}\mid \Phi|_\Omega=0\}\;. $$
The norm is denoted again as $\norm{\cdot}_k$. When there is ambiguity about the manifold, the subindex shall denote the full space, i.e.,
$$\norm{\cdot}_k=\norm{\cdot}_{\H^k(\Omega)}\;.$$
\end{definition}

It can be shown that the Sobolev spaces $\H^k(\Omega)$ do not depend on the particular choice of $\;\tilde{\Omega}$\,.
There are many equivalent ways to define the Sobolev norms. In particular we shall need the following characterization.

\begin{proposition}\label{equivalentsobolev}
The Sobolev norm of order 1, $\norm{\cdot}_1$, is equivalent to the norm $$\sqrt{\norm{\d\cdot}^2_{\Lambda^1}+\norm{\cdot}^2}\;,$$
where $\d$ stands for the exterior differential acting on functions, cf. \cite{Ma01}, and $\norm{\d\cdot}_{\Lambda^1}$ is
the induced norm from the natural scalar product among 1-forms $\alpha\in\Lambda^1(\Omega)$.
\end{proposition}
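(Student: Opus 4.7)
The plan is to prove the equivalence of the two norms in two steps: first establish the identity on the boundaryless manifold $\tilde\Omega$, and then transfer it to $\Omega$ using the quotient definition together with a standard Sobolev extension operator.

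First, I would work on the boundaryless auxiliary manifold $(\tilde\Omega,\tilde\eta)$. For $\Phi\in\C^\infty(\tilde\Omega)$, integration by parts (Green's identity without boundary terms, since $\partial\tilde\Omega=\emptyset$) yields
\[
\scalar{\Phi}{-\Delta_{\tilde\eta}\Phi}=\scalar{\d\Phi}{\d\Phi}_{\Lambda^1(\tilde\Omega)}=\norm{\d\Phi}_{\Lambda^1(\tilde\Omega)}^2,
\]
so $\norm{\Phi}_1^2=\norm{\d\Phi}_{\Lambda^1(\tilde\Omega)}^2+\norm{\Phi}^2$ holds on smooth functions. Since $\C^\infty(\tilde\Omega)$ is dense in $\H^1(\tilde\Omega)$ by Definition~\ref{DefSobolev}, and since the right-hand side is continuous with respect to $\norm{\cdot}_1$ (the exterior derivative extends continuously from smooth functions to $\H^1(\tilde\Omega)$), this identity persists on all of $\H^1(\tilde\Omega)$.

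Next, I would transfer this to $\Omega$. For the bound $\sqrt{\norm{\d\Phi}_{\Lambda^1(\Omega)}^2+\norm{\Phi}^2}\leq\norm{\Phi}_{\H^1(\Omega)}$, let $\tilde\Phi\in\H^1(\tilde\Omega)$ be any representative of the equivalence class $\Phi\in\H^1(\Omega)$. Restricting integrals from $\tilde\Omega$ to $\Omega$ only decreases the $L^2$-norms of $\tilde\Phi$ and $\d\tilde\Phi$, so by Step~1 applied to $\tilde\Phi$,
\[
\norm{\d\Phi}_{\Lambda^1(\Omega)}^2+\norm{\Phi}_{L^2(\Omega)}^2
\leq \norm{\d\tilde\Phi}_{\Lambda^1(\tilde\Omega)}^2+\norm{\tilde\Phi}_{L^2(\tilde\Omega)}^2
=\norm{\tilde\Phi}_1^2.
\]
Taking infimum over all representatives $\tilde\Phi$ produces the desired bound by the quotient norm.

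For the reverse inequality, I would invoke the standard extension theorem for Sobolev spaces on a compact Riemannian manifold with smooth boundary: there exists a bounded linear extension operator $E\colon\H^1(\Omega)\to\H^1(\tilde\Omega)$, where the source carries the intrinsic norm $\sqrt{\norm{\d\cdot}_{\Lambda^1(\Omega)}^2+\norm{\cdot}^2}$ and the target carries $\norm{\cdot}_1$, such that $(E\Phi)|_\Omega=\Phi$ and $\norm{E\Phi}_1\leq C\sqrt{\norm{\d\Phi}_{\Lambda^1(\Omega)}^2+\norm{\Phi}^2}$. This is proved in the usual way by choosing a finite collar-coordinate atlas near $\pO$, performing reflection extensions locally, gluing with a partition of unity, and controlling the resulting seminorms by the local $\d$-derivative norms; far from the boundary the function is left unchanged. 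Since $E\Phi$ is a representative of the class $\Phi$ in the quotient,
\[
\norm{\Phi}_{\H^1(\Omega)}\leq \norm{E\Phi}_1\leq C\sqrt{\norm{\d\Phi}_{\Lambda^1(\Omega)}^2+\norm{\Phi}^2}.
\]

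The main technical obstacle is the construction of the extension operator $E$, which is the only place where the smoothness of $\partial\Omega$ is really used; the two steps on the boundaryless manifold and the trivial restriction bound are routine. Once $E$ is in hand, combining both inequalities yields the equivalence of norms claimed in the proposition.
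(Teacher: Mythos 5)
Your Step~1 is exactly the paper's proof: integrate by parts on the boundaryless manifold $(\tilde\Omega,\tilde\eta)$ using $-\Delta_{\tilde\eta}=\d^\dagger\d$ to get the identity $\norm{\Phi}_1^2=\norm{\Phi}^2+\norm{\d\Phi}_{\Lambda^1}^2$ on $\C^\infty(\tilde\Omega)$, then extend by density. Where you differ is that the paper stops there, asserting only that ``it is enough to show it for a boundaryless Riemannian manifold,'' whereas you actually carry out the transfer to $\Omega$: the easy inequality by restricting any representative and taking the infimum defining the quotient norm of Definition~\ref{DefSobolev2}, and the reverse inequality by invoking a bounded extension operator $E\colon\H^1(\Omega)\to\H^1(\tilde\Omega)$ built from collar coordinates, local reflection and a partition of unity. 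This is the right way to close the argument: since $\H^1(\Omega)$ carries the quotient norm while $\sqrt{\norm{\d\cdot}_{\Lambda^1}^2+\norm{\cdot}^2}$ is computed intrinsically on $\Omega$, the equivalence of the two is precisely the content of the extension theorem, and the paper's reduction to the boundaryless case silently presupposes it. Your version is therefore more complete; the only caveat is that you should either cite the extension theorem for compact manifolds with smooth boundary (e.g.\ in the spirit of \cite{Ad03}) or note that the reflection construction genuinely requires the smoothness (or at least Lipschitz regularity) of $\pO$, which the standing hypotheses of the paper provide.
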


\begin{proof}
It is enough to show it for a boundaryless Riemannian manifold $(\tilde{\Omega},\tilde{\eta})\,.$
The Laplace-Beltrami operator can be expressed in terms of the exterior differential and its formal adjoint,
$$-\Delta_{\tilde{\eta}}=\d^\dagger\d\;,$$
where the formal adjoint is defined to be the unique differential operator $\d^\dagger:\Lambda^1(\tilde{\Omega})\to\C^\infty(\tilde{\Omega})$
that verifies $$\scalar{\alpha}{\d\Phi}_{\Lambda^1}=\scalar{\d^\dagger\alpha}{\Phi}\quad
\alpha\in\Lambda^1(\tilde{\Omega}),\Phi\in\C^\infty(\tilde{\Omega})\;.$$ Let $\Phi\in\C^{\infty}(\tilde{\Omega})$.
Then we have that
\begin{align*}
\norm{\Phi}^2_1&=\int_{\tilde{\Omega}}\bar{\Phi}(I-\Delta_{\tilde{\eta}})\Phi\d\mu_{\tilde{\eta}}\\
&=\int_{\tilde{\Omega}}\bar{\Phi}\Phi\d\mu_{\tilde{\eta}}+ \int_{\tilde{\Omega}}\bar{\Phi}\d^\dagger\d\Phi\d\mu_{\tilde{\eta}}\\
&=\norm{\Phi}^2+\scalar{\d\Phi}{\d\Phi}_{\Lambda^1}=\norm{\Phi}^2+\norm{\d\Phi}_{\Lambda^1}^2\;.
\end{align*}
\end{proof}
The subindex $\Lambda^1$ will be omitted when it is clear from the context which are the scalar products considered.\\

The boundary $\pO$ of the Riemannian manifold $(\Omega,\pO,\eta)$ has itself the structure of a Riemannian manifold
without boundary $(\pO,\partial\eta)$. The Riemannian metric induced at the boundary is just the pull-back of the
Riemannian metric $\partial\eta=i^{\star}\eta$, where $i:\pO\to\Omega$ is the inclusion map. The spaces of smooth functions
over the two manifolds verify that $\C^\infty(\Omega)\bigr|_{\pO}\simeq\C^{\infty}(\pO)$.

There is an important relation between the Sobolev spaces defined over the manifolds $\Omega$ and $\pO$.
This is the well known Lions trace theorem (cf. \cite[Theorem 7.39]{Ad03}, \cite[Theorem 8.3]{Li72}):

\begin{theorem}[Lions trace theorem]\label{LMtracetheorem}
Let $\Phi\in\C^{\infty}(\Omega)$ and let $\gamma:\C^\infty(\Omega)\to\C^{\infty}(\pO)$ be the trace map $\gamma(\Phi)=\Phi\bigr|_{\pO}$. There is a unique continuous extension of the trace map such that
\begin{enumerate}
\item $\gamma:\H^{k}(\Omega)\to\H^{k-1/2}(\pO)$, $k\geq 1/2$\;.\\
\item The map is surjective\;.\\
\end{enumerate}
\end{theorem}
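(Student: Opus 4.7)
The plan is to reduce the statement to a standard estimate on the half-space $\mathbb{R}^n_+ := \mathbb{R}^{n-1}\times[0,\infty)$ by means of a boundary collar, a finite atlas of boundary charts flattening $\pO$, and a subordinate partition of unity; compactness and smoothness of $\pO$ make this reduction finite and uniform. Under the pull-back by such a chart the intrinsic norms $\norm{\cdot}_k$ on $\Omega$ and $\norm{\cdot}_{k-1/2}$ on $\pO$ are equivalent to the Euclidean Bessel-potential norms on $\mathbb{R}^n_+$ and $\mathbb{R}^{n-1}$ respectively, so it suffices to work in the model case.

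First I would establish the trace estimate on the half-space. For $\Phi\in\C_c^\infty(\overline{\mathbb{R}^n_+})$ let $\hat\Phi(\xi,x_n)$ denote the partial Fourier transform in the tangential variables and set $\langle\xi\rangle:=(1+|\xi|^2)^{1/2}$. The identity $|\hat\Phi(\xi,0)|^2 = -2\,\mathrm{Re}\int_0^\infty \hat\Phi\,\overline{\partial_n\hat\Phi}\,\d x_n$ combined with the weighted Cauchy--Schwarz bound $2|ab|\le\langle\xi\rangle a^2+\langle\xi\rangle^{-1}b^2$ yields
\begin{equation*}
\langle\xi\rangle^{2k-1}|\hat\Phi(\xi,0)|^2 \;\le\; \int_0^\infty\!\bigl(\langle\xi\rangle^{2k}|\hat\Phi(\xi,x_n)|^2 + \langle\xi\rangle^{2k-2}|\partial_n\hat\Phi(\xi,x_n)|^2\bigr)\,\d x_n.
\end{equation*}
Integrating in $\xi$ and applying Plancherel on both sides gives $\norm{\gamma\Phi}_{\H^{k-1/2}(\mathbb{R}^{n-1})} \le C\,\norm{\Phi}_{\H^k(\mathbb{R}^n_+)}$. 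Patching via the partition of unity transports the bound to $(\Omega,\pO,\eta)$. Since $\C^\infty(\Omega)$ is dense in $\H^k(\Omega)$ by Definition~\ref{DefSobolev2}, the trace $\gamma$ admits a unique bounded extension $\H^k(\Omega)\to\H^{k-1/2}(\pO)$, proving (i) and uniqueness.

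For surjectivity I would exhibit an explicit bounded right inverse in the model case. Given $f\in\H^{k-1/2}(\mathbb{R}^{n-1})$ and a cut-off $\psi\in\C_c^\infty(\mathbb{R})$ with $\psi(0)=1$, define
\begin{equation*}
(\mathcal{E}f)(x',x_n) := \mathcal{F}^{-1}_{\xi\to x'}\!\bigl(\psi(x_n\langle\xi\rangle)\,\hat f(\xi)\bigr).
\end{equation*}
Plancherel together with the substitution $t=x_n\langle\xi\rangle$ shows at once that $\gamma(\mathcal{E}f)=f$ and $\norm{\mathcal{E}f}_{\H^k(\mathbb{R}^n_+)}\le C\norm{f}_{\H^{k-1/2}(\mathbb{R}^{n-1})}$. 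Gluing these local extensions with the partition of unity produces a bounded right inverse globally, which yields (ii).

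The main obstacle is not the half-space calculation but the coordinate-invariance underpinning the reduction: one has to verify that the intrinsic Bessel-potential norms of Definitions~\ref{DefSobolev} and \ref{DefSobolev2} are equivalent, after pull-back by a $\C^\infty$-diffeomorphism and multiplication by a smooth cut-off, to the Euclidean norms used above. This in turn rests on commutator estimates between $(I-\Delta_{\tilde\eta})^{k/2}$ and pointwise multipliers and on the diffeomorphism-invariance of Bessel potentials, results which are standard but carry the true technical weight of the theorem.
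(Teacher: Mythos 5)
The paper does not actually prove this theorem: it is quoted as a known result with pointers to \cite[Theorem 7.39]{Ad03} and \cite[Theorem 8.3]{Li72}, so there is no internal argument to compare yours against. Your proposal reproduces the standard textbook proof of those references — flattening by boundary charts and a partition of unity, the tangential-Fourier integration-by-parts estimate for boundedness, and the explicit extension operator $\mathcal{F}^{-1}\bigl(\psi(x_n\langle\xi\rangle)\hat f(\xi)\bigr)$ for surjectivity — and the overall architecture, including your closing remark that the real technical weight lies in the chart-invariance of the fractional norms of Definitions~\ref{DefSobolev} and \ref{DefSobolev2}, is sound.

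There is, however, one genuine gap in the half-space step as written. Your estimate bounds the trace by the anisotropic quantity $\int_0^\infty\bigl(\langle\xi\rangle^{2k}|\hat\Phi|^2+\langle\xi\rangle^{2k-2}|\partial_n\hat\Phi|^2\bigr)\,\d x_n$, and this is dominated by $\norm{\Phi}_{\H^k}^2$ only when $k\geq 1$: after Plancherel in $x_n$ one needs $\langle\xi\rangle^{2k-2}\eta_n^2\leq C\bigl(\langle\xi\rangle^2+\eta_n^2\bigr)^k$, and the ratio $t/(1+t)^k$ with $t=\eta_n^2/\langle\xi\rangle^2$ is unbounded for $k<1$. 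So your argument does not cover the fractional range $1/2<k<1$ claimed in the statement; that range requires interpolating the trace operator between integer orders or a direct Besov-type computation. Relatedly, the endpoint $k=1/2$ in the paper's statement is in fact false — the trace map is not bounded from $\H^{1/2}(\Omega)$ to $\H^{0}(\pO)$ — so the correct hypothesis is $k>1/2$; this is an inaccuracy inherited from the statement rather than introduced by you, but your write-up should neither rely on nor claim the $k=1/2$ case.
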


Finally we introduce for later use some particular operators associated to the Laplacian.
Consider the symmetric operator on smooth functions with support away from the boundary
$\Delta_0:=\Delta_\eta\bigr|_{\C^\infty_c(\Omega)}$. Then we have the following extensions of it.

\begin{definition}\hfill
\begin{enumerate}
\item The \textbf{minimal closed extension} $\Delta_{\mathrm{min}}$ is defined to be the closure of $\Delta_0$.
Its domain is $\D(\Delta_{\mathrm{min}})=\H^2_0:=\overline{\C^\infty_c(\Omega)}^{\norm{\cdot}_2}$\,.
\item The \textbf{maximal closed extension} $\Delta_{\mathrm{max}}$ is the closed operator defined in the domain
$\D(\Delta_{\mathrm{max}})=\bigl\{ \Phi\in\H^0(\Omega)\bigr| \Delta_\eta\Phi\in\H^0(\Omega) \bigr\}$\,.
\end{enumerate}
\end{definition}

The trace map defined in Theorem \ref{LMtracetheorem} can be extended continuously to $\D(\Delta_{\mathrm{max}})$,
see for instance \cite{Fr05,Grubb68,Li72}:

\begin{theorem}[Weak trace theorem for the Laplacian]\label{weaktracetheorem}
The Sobolev space $\H^k(\Omega)$, with $k\geq2$, is dense in $\mathcal{D}(\Delta_{\mathrm{max}})$ and
there is a unique continuous extension of the trace map $\gamma$ such that
$$\gamma \colon \mathcal{D}(\Delta_{\mathrm{max}}) \to H^{-1/2}(\pO)\;.$$
Moreover $\ker \gamma = H_0^2(\Omega)$\,.
\end{theorem}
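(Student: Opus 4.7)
The plan is to define the extended trace by duality against $\H^{1/2}(\pO)$ boundary data, using Green's second identity
\begin{equation*}
\scalar{\Phi}{\Delta_\eta\Psi}-\scalar{\Delta_\eta\Phi}{\Psi}=\scalarb{\gamma(\Phi)}{\dot\gamma(\Psi)}-\scalarb{\dot\gamma(\Phi)}{\gamma(\Psi)}\,,\quad \Phi,\Psi\in\H^2(\Omega)\,,
\end{equation*}
as a defining relation, where $\dot\gamma(\Psi)=\d\Psi(\nu)\bigr|_{\pO}$ is the outward normal-derivative trace, which lies in $\H^{1/2}(\pO)$ for $\Psi\in\H^2(\Omega)$ by Theorem~\ref{LMtracetheorem}. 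Given $\Phi\in\D(\Delta_{\mathrm{max}})$ and $\psi\in\H^{1/2}(\pO)$, a lift $\Psi\in\H^2(\Omega)$ satisfying $\gamma(\Psi)=0$ and $\dot\gamma(\Psi)=\psi$ is furnished by the surjectivity of the combined map $(\gamma,\dot\gamma)\colon\H^2(\Omega)\to\H^{3/2}(\pO)\times\H^{1/2}(\pO)$ together with a continuous right inverse. One then sets
\begin{equation*}
\gamma(\Phi)[\psi]:=\scalar{\Phi}{\Delta_\eta\Psi}-\scalar{\Delta_\eta\Phi}{\Psi}\,.
\end{equation*}

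Three verifications are in order. Independence from the lift: two admissible lifts differ by a $\Psi_0\in\H^2_0=\D(\Delta_{\mathrm{min}})$, and $\scalar{\Phi}{\Delta_\eta\Psi_0}=\scalar{\Delta_\eta\Phi}{\Psi_0}$ by the very definition of $\Delta_{\mathrm{max}}$ as the adjoint of $\Delta_{\mathrm{min}}$, using density of $\C^\infty_c(\Omega)$ in $\H^2_0$. Continuity in the graph norm: Cauchy--Schwarz together with the bounded right inverse yields $|\gamma(\Phi)[\psi]|\leq(\norm{\Phi}+\norm{\Delta_\eta\Phi})\norm{\Psi}_2\leq C(\norm{\Phi}+\norm{\Delta_\eta\Phi})\norm{\psi}_{1/2}$, so $\gamma(\Phi)\in\H^{-1/2}(\pO)$ with a continuity estimate in the graph norm of $\D(\Delta_{\mathrm{max}})$. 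Agreement on $\H^2(\Omega)$: the displayed Green identity applied to any $\Phi\in\H^2(\Omega)$ shows that the new definition reproduces the classical Lions pairing of $\gamma(\Phi)\in\H^{3/2}(\pO)$ against $\psi$, so this is a genuine extension.

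For the density of $\H^k(\Omega)$ with $k\geq 2$ in $\D(\Delta_{\mathrm{max}})$ under the graph norm $\norm{\cdot}^2+\norm{\Delta_\eta\cdot}^2$, I would use a Dirichlet splitting: given $\Phi\in\D(\Delta_{\mathrm{max}})$, solve $\Delta_\eta\Psi_1=\Delta_\eta\Phi$ with $\Psi_1\in\H^2(\Omega)\cap\H^1_0(\Omega)$ via the self-adjoint Dirichlet Laplacian and its elliptic regularity up to the boundary; the residual $\Phi-\Psi_1$ is weakly harmonic in $\H^0(\Omega)$ and, via the Poisson operator, corresponds to boundary data in $\H^{-1/2}(\pO)$ which is approximable by smooth data whose harmonic extensions are in $\H^k(\Omega)$. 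Since the harmonic difference contributes nothing to $\Delta_\eta$, graph-norm convergence reduces to $\H^0$-convergence, supplied by the continuity of the Poisson operator. Uniqueness of the extension is then immediate from density. Finally, $\H^2_0\subset\ker\gamma$ follows by continuity from $\C^\infty_c(\Omega)\subset\ker\gamma$; the reverse inclusion uses that $\gamma(\Phi)=0$ forces the Green-identity relation for all $\H^2$-lifts, identifying $\Phi$ with an element of the self-adjoint Dirichlet realization, and then, combined with the vanishing of the normal trace extracted from the analogous dual construction, placing $\Phi$ in $\H^2_0$.

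The main technical obstacle is the density claim: the trace extension and its continuity are essentially formal consequences of Green's identity and the Lions trace theorem, but controlling the harmonic component of $\D(\Delta_{\mathrm{max}})$ rests on the Poisson-extension theory for $\H^{-1/2}(\pO)$ data, which carries the real analytic weight of the proof and is where the hypothesis of smooth boundary is genuinely used.
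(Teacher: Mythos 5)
The paper offers no proof of this theorem: it is quoted from the literature (Lions--Magenes, Grubb, Frey), so there is nothing internal to compare against. Your construction is essentially the standard argument from those references, and most of it is sound: defining $\gamma(\Phi)$ by duality through Green's second identity against lifts $\Psi\in\H^2(\Omega)$ with $\gamma(\Psi)=0$ and $\dot\gamma(\Psi)=\psi$, checking independence of the lift via $\D(\Delta_{\mathrm{max}})=\D(\Delta_{\mathrm{min}}^\dagger)$ and the characterization of $\H^2_0(\Omega)$ as the space of $\H^2$-functions with both traces vanishing, and reading off continuity in the graph norm from a bounded right inverse of the trace pair. The density argument via the splitting into a Dirichlet part and an $L^2$-harmonic part controlled by the Poisson operator is also the right mechanism; the one small repair is that for $k>2$ you must first approximate $\Delta_\eta\Phi$ in $\H^0(\Omega)$ by smoother data before solving the Dirichlet problem, since otherwise the particular solution $\Psi_1$ lies only in $\H^2(\Omega)$ and your approximants never reach $\H^k(\Omega)$.

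The genuine gap is the inclusion $\ker\gamma\subset\H^2_0(\Omega)$, and it cannot be closed as you propose because, with $\gamma$ denoting the Dirichlet trace alone (as in Theorem~\ref{LMtracetheorem} and in your own construction), the claim is false. Your argument correctly shows that $\gamma(\Phi)=0$ forces $\scalar{\Phi}{\Delta_\eta\Psi}=\scalar{\Delta_\eta\Phi}{\Psi}$ for all $\Psi$ in the Dirichlet domain $\H^2(\Omega)\cap\H^1_0(\Omega)$, hence by self-adjointness $\Phi\in\H^2(\Omega)\cap\H^1_0(\Omega)$ --- but this space strictly contains $\H^2_0(\Omega)$ (on the unit disc, $1-|x|^2$ belongs to it while its normal derivative on the boundary equals $-2$). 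The phrase ``the vanishing of the normal trace extracted from the analogous dual construction'' has no support: nothing in the single hypothesis $\gamma(\Phi)=0$ controls $\dot\gamma(\Phi)$. The statement that is actually true, and the one proved in the cited references, concerns the joint trace $(\gamma,\dot\gamma)\colon\D(\Delta_{\mathrm{max}})\to\H^{-1/2}(\pO)\times\H^{-3/2}(\pO)$, whose kernel is $\H^2_0(\Omega)=\D(\Delta_{\mathrm{min}})$; note also that the normal-derivative component only lands in $\H^{-3/2}(\pO)$, since it is defined by pairing against Dirichlet data $\gamma(\Psi)=\psi\in\H^{3/2}(\pO)$ of lifts with $\dot\gamma(\Psi)=0$. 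You should either prove that two-component version --- your dual construction adapts verbatim --- or record explicitly that the kernel of the Dirichlet trace alone is $\H^2(\Omega)\cap\H^1_0(\Omega)$, flagging the discrepancy with the statement as printed.
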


\section{A class of closable quadratic forms on a Riemannian manifold}\label{sec:class}

We begin presenting a canonical sesquilinear form that, on smooth functions over $\Omega$, is associated to
the Laplace-Beltrami operator. Motivated by this quadratic form we will address questions like hermiticity, closability
and semi-boundedness on suitable domains.

Integrating once by parts the expression
$\scalar{\Phi}{-\Delta_\eta\Psi}$ we obtain, on smooth functions, the following sesquilinear form
$Q \colon \C^{\infty}(\Omega)\times\C^{\infty}(\Omega)\to\mathbb{C}$\,,
\begin{equation} \label{Q-def}
Q(\Phi,\Psi)=\scalar{\d\Phi}{\d\Psi}_{\Lambda^1}-\scalarb{\varphi}{\dot{\psi}}\;.
\end{equation}

From now on the restrictions to the boundary are going to be denoted with the corresponding small size greek letters, $\varphi:=\gamma(\Phi)$.
The doted small size greek letters denote the restriction to the boundary of the normal derivatives, $\dot{\varphi}:=\gamma(\d\Phi(\nu))$,
where $\nu\in\mathfrak{X}(\Omega)$ is any vector field such that $\mathrm{i}_\nu\d\mu_\eta=\d\mu_{\partial\eta}$.
Notice that in the expression above $\d \Phi\in \Lambda^1(\Omega)$ is a 1-form on $\Omega$, thus the inner product $\langle \cdot, \cdot \rangle_{\Lambda^1}$
is defined accordingly by using the induced Hermitian structure on the cotangent bundle (see, e.g., \cite{Po81}).
We have therefore that $$\scalar{\d\Phi}{\d\Psi}_{\Lambda^1}= \int_\Omega \eta^{-1}(\d \bar{\Phi}, \d\Psi) \diff \mu_\eta\;.$$
In the second term at the right hand side of \eqref{Q-def} $\scalarb{\cdot}{\cdot}$ stands for the induced scalar product at the boundary
given explicitly by
\begin{equation}\label{scalarboundary}
\scalarb{\varphi}{\psi}=\int_{\pO }\bar{\varphi}\,  \psi \,\diff \mu_{\partial \eta}  ,
\end{equation}
where $\diff\mu_{\partial \eta}$ is the Riemannian volume defined by the restricted Riemannian metric $\partial\eta$. The subscript $\Lambda^1$ will be dropped from now on as along as there is no risk of confusion.\\\

In general, the sesquilinear form $Q$ defined above is not Hermitian. To study subspaces where $Q$ is Hermitian it is convenient to isolate the part of $Q$ related to the boundary data $(\varphi,\dot{\varphi})$.

\begin{definition}
Let $\Phi,\Psi\in \C^{\infty}(\Omega)$ and denote by $(\varphi,\dot{\varphi})$, $(\psi,\dot{\psi})$ the corresponding boundary data. The \textbf{Lagrange boundary form} is defined as:
\begin{equation}\label{lagrange}
	\Sigma\bigl(\Phi,\Psi\bigr)=\Sigma\bigl((\varphi,\dot{\varphi}),(\psi,\dot{\psi})\bigr):=\scalarb{\varphi}{\dot{\psi}}-\scalarb{\dot{\varphi}}{\psi}.
\end{equation}
Any dense subspace $\mathcal{D}\subset\H^0(\Omega)$ is said to be \textbf{isotropic with respect to $\Sigma$} if $\Sigma\bigl(\Phi,\Psi\bigr)=0\quad\forall\Phi,\Psi\in \D$.
\end{definition}

\begin{proposition}
The sesquilinear form $Q$ defined in Eq.~(\ref{Q-def}) on a dense subspace $\mathcal{D}\subset \H^{0}$ is Hermitian iff $\mathcal{D}$ is isotropic with respect to $\Sigma$.
\end{proposition}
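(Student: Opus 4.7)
The plan is to verify the equivalence by a direct algebraic computation of $Q(\Phi,\Psi)-\overline{Q(\Psi,\Phi)}$ for arbitrary $\Phi,\Psi\in\mathcal{D}$, showing that this antisymmetric part coincides (up to a sign) with the Lagrange boundary form $\Sigma(\Phi,\Psi)$. Hermiticity of $Q$ on $\mathcal{D}$ is, by definition, the vanishing of this difference for all $\Phi,\Psi\in\mathcal{D}$, which is precisely the isotropy condition.

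First I would expand
\[
Q(\Phi,\Psi)-\overline{Q(\Psi,\Phi)}
=\bigl[\scalar{\d\Phi}{\d\Psi}-\scalarb{\varphi}{\dot{\psi}}\bigr]
-\overline{\bigl[\scalar{\d\Psi}{\d\Phi}-\scalarb{\psi}{\dot{\varphi}}\bigr]}.
\]
The $L^{2}$-inner product on $\Lambda^{1}(\Omega)$ induced by the Hermitian structure on the cotangent bundle is Hermitian, so $\overline{\scalar{\d\Psi}{\d\Phi}}=\scalar{\d\Phi}{\d\Psi}$ and these two bulk terms cancel. For the boundary piece, the scalar product $\scalarb{\cdot}{\cdot}$ defined in \eqref{scalarboundary} is likewise Hermitian, hence $\overline{\scalarb{\psi}{\dot{\varphi}}}=\scalarb{\dot{\varphi}}{\psi}$. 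Combining these two observations yields
\[
Q(\Phi,\Psi)-\overline{Q(\Psi,\Phi)}
=-\scalarb{\varphi}{\dot{\psi}}+\scalarb{\dot{\varphi}}{\psi}
=-\Sigma(\Phi,\Psi),
\]
where the last equality is just the definition \eqref{lagrange}.

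From this identity both implications are immediate: if $\mathcal{D}$ is isotropic with respect to $\Sigma$, then $Q(\Phi,\Psi)=\overline{Q(\Psi,\Phi)}$ for all $\Phi,\Psi\in\mathcal{D}$, so $Q$ is Hermitian on $\mathcal{D}$; conversely, if $Q$ is Hermitian on $\mathcal{D}$, the left hand side vanishes for every pair, forcing $\Sigma(\Phi,\Psi)=0$ on $\mathcal{D}\times\mathcal{D}$.

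There is essentially no obstacle of substance here: the argument is a one-line manipulation once one observes the Hermiticity of the two pieces entering $Q$. The only point requiring a moment's care is that the boundary data $(\varphi,\dot{\varphi})$ be well-defined and lie in spaces where $\scalarb{\cdot}{\cdot}$ makes sense on $\mathcal{D}$; this is tacit in the statement (the form $Q$ must be well-defined on $\mathcal{D}$ in the first place) and will be handled in the subsequent sections by the trace theorems \ref{LMtracetheorem} and \ref{weaktracetheorem}, which justify interpreting $\Sigma$ as a pairing in the sense of Proposition~\ref{proppairing}.
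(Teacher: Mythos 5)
Your proof is correct and follows essentially the same route as the paper: the paper likewise observes that, by the Hermiticity of the bulk and boundary inner products, the difference $Q(\Phi,\Psi)-\overline{Q(\Psi,\Phi)}$ reduces to $-\Sigma(\Phi,\Psi)$, so Hermiticity of $Q$ on $\mathcal{D}$ is equivalent to isotropy of $\mathcal{D}$. Your version merely writes out explicitly the cancellation that the paper leaves as "by definition of $Q$".
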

\begin{proof}
The sesquilinear form $Q\colon\mathcal{D}\times\mathcal{D}\to\mathbb{C}$ is Hermitian if $Q(\Phi,\Psi)=\overline{Q(\Psi,\Phi)}$ for all $\Phi,\Psi\in\mathcal{D}$. By definition of $Q$ this is equivalent to $\Sigma\bigl(\Phi,\Psi\bigr)=0$, for all $\Phi,\Psi\in\mathcal{D}$, hence $\mathcal{D}$ is isotropic with respect to $\Sigma$. The reverse implication is obvious.
\end{proof}


\subsection{Isotropic subspaces}
The analysis of maximally isotropic subspaces can be handled more easily using the underlying Hilbert space struture of the Lagrange boundary form
and not considering for the moment any regularity question.
The expression \eqref{lagrange} can be understood as a sesquilinear form on the boundary Hilbert space $\H_b:=\H^0(\pO)\times \H^0(\pO)$\,,
\begin{equation*}
\Sigma\bigl(\Psi,\Phi\bigr)
	=\scalarb{\varphi}{\dot{\psi}}-\scalarb{\dot{\varphi}}{\psi}\;.\\
\end{equation*}
We will therefore focus now on the study of the sesquilinear form on the Hilbert space $\H_b$ and, while there is no risk of confusion, we will denote the scalar product in $\H^0(\pO)$ simply as $\scalar{\cdot}{\cdot}$\,,
\[
	\Sigma\left((\varphi_1,\varphi_2),(\psi_1,\psi_2)\right)
                :=\langle\varphi_1,\psi_2\rangle - \langle\varphi_2,\psi_1\rangle\;,\quad
	(\varphi_1,\varphi_2),(\psi_1,\psi_2)\in \H_b\,.
\]
Formally, $\Sigma$ is a sesquilinear symplectic form by which we mean that it satisfies the following conditions:
\begin{enumerate}
	\item $\Sigma$ is conjugate linear in the first argument and linear in the second.
	\item $\Sigma\Big((\varphi_1,\varphi_2),(\psi_1,\psi_2)\Big)
               =-\overline{\Sigma\Big((\psi_1,\psi_2),(\varphi_1,\varphi_2)\Big)}$\,,
                 $(\varphi_1,\varphi_2),(\psi_1,\psi_2)\in\H_b$\,.
	\item $\Sigma$ is nondegenerate, i.e.~$\Sigma\Big((\varphi_1,\varphi_2),(\psi_1,\psi_2)\Big)=0$
               for all $(\psi_1,\psi_2)\in\H_b$ implies $(\varphi_1,\varphi_2)=(0,0)$\,.
\end{enumerate}
The analysis of the isotropic subspaces of such sesquilinear forms is by no means new and their characterization is well known (\cite{BGP08}, \cite{Ko75}). However, in order to keep this article self-contained, we provide in the following paragraphs independent proofs of the main results that we will need.

First we write the sesquilinear symplectic form $\Sigma$ in diagonal form. This is done introducing the unitary Cayley transformation $\C\colon\H_b\to\H_b$\,,
\[
 \C(\varphi_1,\varphi_2):=\frac{1}{\sqrt{2}}\left( \varphi_1+\mathbf{i}\varphi_2, \varphi_1-\mathbf{i}\varphi_2 \right)\;,\;
  (\varphi_1,\varphi_2)\in\H_b\;.
\]
Putting
\[ \Sigma_c\Big((\varphi_+,\varphi_-),(\psi_+,\psi_-)\Big)
        :=-\mathbf{i}\Big( \langle\varphi_+,\psi_+\rangle - \langle\varphi_-,\psi_-\rangle\Big)
           \;,\quad (\varphi_+,\varphi_-),(\psi_+,\psi_-)\in\H_b \;,
\]
the relation between $\Sigma$ and $\Sigma_c$ is given by
\begin{equation}\label{relation}
	\Sigma\Big((\varphi_1,\varphi_2),(\psi_1,\psi_2)\Big)
  	=\Sigma_c\Big(\C(\varphi_1,\varphi_2),\C(\psi_1,\psi_2)\Big)
	\;,\quad (\varphi_1,\varphi_2),(\psi_1,\psi_2)\in\H_b\;.
\end{equation}

\begin{definition}
Consider a subspace $\mathcal{W}\subset\H_b$ and define the \textbf{$\Sigma$-orthogonal subspace} by
\[
\mathcal{W}^{\perp_\Sigma}:=\left\{(\varphi_1,\varphi_2)\in\H_b\mid\Sigma\Big((\varphi_1,\varphi_2),(\psi_1,\psi_2)\Big)=0
                  \;,\;\forall(\psi_1,\psi_2)\in \mathcal{W}\right\}\;.
\]
A subspace $\mathcal{W}\subset\H_b$ is \textbf{$\Sigma$-isotropic} [resp.~\textbf{maximally $\Sigma$-isotropic}] if $\mathcal{W}\subset \mathcal{W}^{\perp_\Sigma}$ [resp.~$\mathcal{W}= \mathcal{W}^{\perp_\Sigma}$].
\end{definition}

We begin enumerating some direct consequences of the preceding definitions:
\begin{lemma}
Let $\mathcal{W}\subset\H_b$ and put $\mathcal{W}_c:=\C(\mathcal{W})$.
	\begin{enumerate}\label{lemma-iso}
 		\item $\mathcal{W}$ is $\Sigma$-isotropic [resp.~maximally $\Sigma$-isotropic] iff $\mathcal{W}_c$ is $\Sigma_c$-isotropic [resp.~maximally $\Sigma_c$-isotropic].
		 \item If $(\varphi_1,\varphi_2)\in \mathcal{W}\subset \mathcal{W}^{\perp_\Sigma}$, then $\langle \varphi_1,\varphi_2\rangle=\overline{\langle \varphi_1,\varphi_2\rangle}$. If $(\varphi_+,\varphi_-)\in \mathcal{W}_c\subset \mathcal{W}_c^{\perp_{\Sigma_c}}$, then $\|\varphi_+\|=\|\varphi_-\|$.
	\end{enumerate}
\end{lemma}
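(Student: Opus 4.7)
My plan is to prove both parts by direct calculation, leveraging the intertwining identity~\eqref{relation} and the fact that $\C$ is a unitary bijection on $\H_b$.

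For part (i), the key observation is that $\C\colon\H_b\to\H_b$ is a bijection and the identity $\Sigma(v,w)=\Sigma_c(\C v,\C w)$ holds for all $v,w\in\H_b$. So first I would note that $\mathcal{W}$ is $\Sigma$-isotropic iff $\Sigma(v,w)=0$ for all $v,w\in\mathcal{W}$, which by~\eqref{relation} is equivalent to $\Sigma_c(\C v,\C w)=0$ for all $\C v,\C w\in\mathcal{W}_c=\C(\mathcal{W})$, i.e.~$\mathcal{W}_c$ is $\Sigma_c$-isotropic. For the maximal case I would first establish the identity $\C\bigl(\mathcal{W}^{\perp_\Sigma}\bigr)=\mathcal{W}_c^{\perp_{\Sigma_c}}$: the inclusion $\subset$ uses~\eqref{relation} directly, and $\supset$ uses the surjectivity of $\C$ (any $\Sigma_c$-orthogonal element is $\C$ applied to a $\Sigma$-orthogonal element). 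Then $\mathcal{W}=\mathcal{W}^{\perp_\Sigma}$ is equivalent, upon applying the bijection $\C$, to $\mathcal{W}_c=\C(\mathcal{W}^{\perp_\Sigma})=\mathcal{W}_c^{\perp_{\Sigma_c}}$.

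For part (ii), both statements follow by evaluating the respective sesquilinear forms on the diagonal. If $(\varphi_1,\varphi_2)\in\mathcal{W}\subset\mathcal{W}^{\perp_\Sigma}$, the isotropy condition yields
\[
 0=\Sigma\bigl((\varphi_1,\varphi_2),(\varphi_1,\varphi_2)\bigr)
  =\langle\varphi_1,\varphi_2\rangle-\langle\varphi_2,\varphi_1\rangle
  =\langle\varphi_1,\varphi_2\rangle-\overline{\langle\varphi_1,\varphi_2\rangle},
\]
which gives the reality statement. Analogously, for $(\varphi_+,\varphi_-)\in\mathcal{W}_c\subset\mathcal{W}_c^{\perp_{\Sigma_c}}$ the condition reads
\[
 0=\Sigma_c\bigl((\varphi_+,\varphi_-),(\varphi_+,\varphi_-)\bigr)
  =-\mathbf{i}\bigl(\|\varphi_+\|^2-\|\varphi_-\|^2\bigr),
\]
which yields $\|\varphi_+\|=\|\varphi_-\|$.

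I don't anticipate any real obstacle: the content of the lemma is purely algebraic, and the only point requiring a moment's care is the identity $\C(\mathcal{W}^{\perp_\Sigma})=\mathcal{W}_c^{\perp_{\Sigma_c}}$ used in the maximal case, which is where the bijectivity (as opposed to just injectivity) of the Cayley transform is essential.
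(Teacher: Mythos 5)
Your proof is correct and follows essentially the same route as the paper: part (i) from the intertwining identity~\eqref{relation} together with the unitarity of $\C$, and part (ii) by evaluating $\Sigma$ and $\Sigma_c$ on the diagonal. The only difference is that you spell out the identity $\C(\mathcal{W}^{\perp_\Sigma})=\mathcal{W}_c^{\perp_{\Sigma_c}}$ explicitly, which the paper leaves implicit.
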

\begin{proof}
Part (i) follows directly from Eq.(\ref{relation}) and the fact that $\C$ is a unitary transformation. To prove (ii) note that if $(\varphi_1,\varphi_2)$ is in an isotropic subspace $\mathcal{W}$, then
\[
 \Sigma\Big((\varphi_1,\varphi_2),(\varphi_1,\varphi_2)\Big)
         =\langle\varphi_1,\varphi_2\rangle - \langle\varphi_2,\varphi_1\rangle=0\;.
\]
One argues similarly in the other case.
\end{proof}

\begin{proposition}\label{pro:W}
Let $\mathcal{W}_\pm\subset\H^0(\pO)$ be closed subspaces and put $\mathcal{W}_c:=\mathcal{W}_+\times \mathcal{W}_-\subset\H_b$.
\begin{enumerate}
	\item The subspace $\mathcal{W}_c$ is $\Sigma_c$-isotropic iff it exists a partial isometry
            $V\colon \H^0(\pO)\to \H^0(\pO)$ with initial space $\mathcal{W}_+$
            and final space $\mathcal{W}_-$, i.e.~$V^*V(\H^0(\pO))=\mathcal{W}_+$ and $VV^*(\H^0(\pO))=\mathcal{W}_-$
            and
\[
 \mathcal{W}_c=\{(\varphi_+,V\varphi_+)\mid \varphi_+\in \mathcal{W}_+\}=\mathrm{gra}\,V\;.
\]
	\item The subspace $\mathcal{W}_c$ is maximally $\Sigma_c$-isotropic iff it exists a unitary $U\colon \H^0(\pO)\to \H^0(\pO)$ such that
		\begin{equation}\label{eq:U}
			\mathcal{W}_c=\{(\varphi_+,U\varphi_+)\mid \varphi_+\in \H^0(\pO)\}=\mathrm{gra}\,U\;.
		\end{equation}
\end{enumerate}

\end{proposition}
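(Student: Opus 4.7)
The plan is to treat part (i) through a direct construction that extracts a partial isometry from the isotropy condition, and then to deduce part (ii) by noting that the maximality requirement exactly promotes this partial isometry to a unitary.

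For the forward direction of (i), assuming $\mathcal{W}_c=\mathrm{gra}\,V$ for a partial isometry $V$ with initial space $\mathcal{W}_+$, I would verify isotropy by the direct computation
\[
\Sigma_c\bigl((\varphi_+,V\varphi_+),(\psi_+,V\psi_+)\bigr)
 = -\mathbf{i}\bigl(\langle\varphi_+,\psi_+\rangle - \langle V\varphi_+,V\psi_+\rangle\bigr)
 = -\mathbf{i}\langle\varphi_+,(I-V^*V)\psi_+\rangle = 0,
\]
since $V^*V$ is the orthogonal projection onto the initial space $\mathcal{W}_+\ni\psi_+$. For the converse, Lemma~\ref{lemma-iso}(ii) forces $\|\varphi_+\|=\|\varphi_-\|$ for every $(\varphi_+,\varphi_-)\in\mathcal{W}_c$. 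Hence the coordinate projection $\pi_+\colon\mathcal{W}_c\to\H^0(\pO)$, $(\varphi_+,\varphi_-)\mapsto\varphi_+$, has trivial kernel, and its inverse defines a linear isometric map $V_0\colon\mathcal{W}_+\to\mathcal{W}_-$, where $\mathcal{W}_\pm:=\pi_\pm(\mathcal{W}_c)$. Closedness of $\mathcal{W}_+$ (and symmetrically of $\mathcal{W}_-$) follows because a Cauchy sequence $\varphi_+^{(n)}\in\mathcal{W}_+$ has isometric images $V_0\varphi_+^{(n)}$ also Cauchy, so the pairs $(\varphi_+^{(n)},V_0\varphi_+^{(n)})$ converge in the closed subspace $\mathcal{W}_c$, and the first coordinate of the limit lies in $\mathcal{W}_+$. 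Extending $V_0$ by zero on $\mathcal{W}_+^\perp$ yields the desired partial isometry $V$ with $\mathcal{W}_c=\mathrm{gra}\,V$.

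For part (ii), assume $\mathcal{W}_c$ is maximally $\Sigma_c$-isotropic and write $\mathcal{W}_c=\mathrm{gra}\,V$ as in (i). If $\mathcal{W}_+$ were a proper subspace, I would pick $\chi\neq 0$ with $\chi\perp\mathcal{W}_+$ and observe that $\Sigma_c\bigl((\chi,0),(\psi_+,V\psi_+)\bigr)=-\mathbf{i}\langle\chi,\psi_+\rangle=0$ for all $\psi_+\in\mathcal{W}_+$, so $(\chi,0)\in\mathcal{W}_c^{\perp_{\Sigma_c}}\setminus\mathcal{W}_c$, contradicting maximality. A symmetric argument applied in the second coordinate forces $\mathcal{W}_-=\H^0(\pO)$, so $V$ is a unitary $U$. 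Conversely, given a unitary $U$, its graph is isotropic by (i), and any $(\varphi_1,\varphi_2)\in(\mathrm{gra}\,U)^{\perp_{\Sigma_c}}$ satisfies $\langle\varphi_1,\psi\rangle=\langle\varphi_2,U\psi\rangle$ for every $\psi\in\H^0(\pO)$, so $\varphi_1=U^*\varphi_2$, i.e.\ $\varphi_2=U\varphi_1$ and $(\varphi_1,\varphi_2)\in\mathrm{gra}\,U$, giving maximality.

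The main technical obstacle will be the bookkeeping in the converse of (i): verifying that the subspaces $\mathcal{W}_\pm$ built from the coordinate projections are genuinely closed (this rests on norm-preservation from Lemma~\ref{lemma-iso}(ii) combined with closedness of $\mathcal{W}_c$) and that the extended $V$ has exactly $\mathcal{W}_+$ as its initial space and $\mathcal{W}_-$ as its final space, rather than merely some subspaces thereof. Once the graph structure has been extracted from the isotropy condition via the norm equality $\|\varphi_+\|=\|\varphi_-\|$, the rest of the argument is essentially formal.
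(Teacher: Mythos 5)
Your proposal is correct and follows essentially the same route as the paper: the forward directions by the same direct computation with $V^*V$ (resp.\ $U$), the converse of (i) by extracting the map $\varphi_+\mapsto\varphi_-$ from the norm equality of Lemma~\ref{lemma-iso}(ii) and extending it by zero on $\mathcal{W}_+^\perp$, and the maximality in (ii) by testing vectors of the form $(\chi,0)$ and $(0,\chi')$ with $\chi\perp\mathcal{W}_+$, $\chi'\perp\mathcal{W}_-$ against $\Sigma_c$, which is exactly the paper's decomposition $\H^0(\pO)=\mathcal{W}_+\oplus\mathcal{W}_+^\perp=(U\mathcal{W}_+)\oplus(U\mathcal{W}_+)^\perp$. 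The only addition is your explicit closedness check for $\mathcal{W}_\pm$, which the paper takes as part of the hypothesis.
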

\begin{proof}
(i) For any $(\varphi_+,\varphi_-)\in \mathcal{W}_c$ we define the mapping
$V\colon \H^0(\pO)\to \H^0(\pO)$ by $V(\varphi_+):=\varphi_-$, $\varphi_+\in \mathcal{W}_+$ and $V(\varphi)=0$, $\varphi\in \mathcal{W}_+^\perp$. Since $\mathcal{W}_c\subset \mathcal{W}_c^{\perp_{\Sigma_c}}$ we have from part (ii) of Lemma~\ref{lemma-iso} that $V$ is a well-defined linear map and a partial isometry. The reverse implication is immediate: for any $(\varphi_+,V\varphi_+)\in \mathcal{W}_c$ we have
\[
 \Sigma_c\Big((\varphi_+,V\varphi_+),(\psi_+,V\psi_+)\Big)
     =-\mathbf{i}\Big(\langle\varphi_+,\psi_+\rangle- \langle V\varphi_+,V\psi_+\rangle\Big)
     =0\;,\quad \psi_+\in \H^0(\pO)\;,
\]
hence, $ \mathcal{W}_c=\{(\varphi_+,V\varphi_+)\mid \varphi_+\in \mathcal{W}_+\}=\mathrm{gra}\,V$, is $\Sigma_c$-isotropic.

(ii) Supose that $\mathcal{W}_c= \mathcal{W}_c^{\perp_{\Sigma_c}}$. By the previous item we have $\mathcal{W}_c=\{(\varphi_+,U\varphi_+)\mid \varphi_+\in \mathcal{W}_+\}$ for some partial isometry $U\colon \H^0(\pO)\to \H^0(\pO)$. Consider the following decompositions $\H^0(\pO)=\mathcal{W}_+\oplus \mathcal{W}_+^\perp= (U\mathcal{W}_+)\oplus (U\mathcal{W}_+)^\perp$ and note that any $(\varphi_+^\perp,\varphi_-^\perp)\in \mathcal{W}_+^\perp\times (U\mathcal{W}_+)^\perp$ satisfies $(\varphi_+^\perp,\varphi_-^\perp)\in \mathcal{W}_c^{\perp_{\Sigma_c}}$. Since $\mathcal{W}_c= \mathcal{W}_c^{\perp_{\Sigma_c}}$ we must have $\varphi_+^\perp=\varphi_-^\perp=0$, or, equivalently, $\mathcal{W}_+=\H^0(\pO)=U\mathcal{W}_+$, hence $\ker U =\ker U^*=\{0\}$ and $U$ is a unitary map.

To prove the reverse implication consider $\mathcal{W}_c=\{(\varphi_+,U\varphi_+)\mid \varphi_+\in \H^0(\pO)\}$ with $U$ unitary and choose $(\psi_+,\psi_-)\in \mathcal{W}_c^{\perp_{\Sigma_c}}$. Then for any $\varphi_+\in \H^0(\pO)$ we have
\[
 0=\Sigma_c\Big((\varphi_+,U\varphi_+),(\psi_+,\psi_-)\Big)
  =-\mathbf{i}\Big(\langle \varphi_+,\psi_+\rangle- \langle U\varphi_+,\psi_-\rangle\Big)
  =-\mathbf{i}\Big(\langle \varphi_+,(\psi_+-U^*\psi_-)\rangle\Big)\;.
\]
This shows that $\psi_-=U\psi_+$ and hence $(\psi_+,\psi_-)\in \mathcal{W}_c$, therefore $\mathcal{W}_c$ is maximally $\Sigma_c$-isotropic.
\end{proof}

The previous analysis allows to characterize finally the $\Sigma$-isotropic subspaces of the boundary Hilbert space $\H_b$.
\begin{theorem}\label{teo:parametriceW}
 A closed subspace $\mathcal{W}\subset\H_b$ is maximally $\Sigma$-isotropic iff there exists a unitary $U\colon  \H^0(\pO)\to \H^0(\pO)$ such that
\[
 \mathcal{W}=\left\{\Big((\1+U)\varphi\,,\,-\mathbf{i}(\1- U)\varphi\Big)\mid \varphi\in \H^0(\pO))\right\}\;.
\]
\end{theorem}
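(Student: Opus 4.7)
The plan is to reduce the statement to Proposition~\ref{pro:W}(ii) via the Cayley transformation $\C$, using the intertwining relation \eqref{relation} and Lemma~\ref{lemma-iso}(i) which together transfer the isotropy question from $\Sigma$ to the diagonalized form $\Sigma_c$.

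First, I would set $\mathcal{W}_c := \C(\mathcal{W})$. By Lemma~\ref{lemma-iso}(i), $\mathcal{W}$ is maximally $\Sigma$-isotropic if and only if $\mathcal{W}_c$ is maximally $\Sigma_c$-isotropic, and Proposition~\ref{pro:W}(ii) then furnishes a unitary $U\colon\H^0(\pO)\to\H^0(\pO)$ such that $\mathcal{W}_c=\{(\eta_+,U\eta_+)\mid \eta_+\in\H^0(\pO)\}$. Conversely, the same two results show that every subspace of this graph form is maximally $\Sigma_c$-isotropic, hence its image under $\C^{-1}$ is maximally $\Sigma$-isotropic. Thus the task reduces to translating the graph description of $\mathcal{W}_c$ through $\C^{-1}$.

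Next, I would invert the Cayley map explicitly. From $\C(\varphi_1,\varphi_2)=\frac{1}{\sqrt{2}}(\varphi_1+\mathbf{i}\varphi_2,\varphi_1-\mathbf{i}\varphi_2)$ one reads off
\[
\C^{-1}(\eta_+,\eta_-)=\frac{1}{\sqrt{2}}\Big(\eta_++\eta_-\,,\,-\mathbf{i}(\eta_+-\eta_-)\Big),
\]
so substituting $\eta_-=U\eta_+$ gives
\[
\C^{-1}(\eta_+,U\eta_+)=\frac{1}{\sqrt{2}}\Big((\1+U)\eta_+\,,\,-\mathbf{i}(\1-U)\eta_+\Big).
\]
Reparametrizing by $\varphi:=\eta_+/\sqrt{2}$ (which is a bijection of $\H^0(\pO)$ onto itself) yields precisely the claimed form
\[
\mathcal{W}=\C^{-1}(\mathcal{W}_c)=\Big\{\big((\1+U)\varphi,\,-\mathbf{i}(\1-U)\varphi\big)\mid \varphi\in\H^0(\pO)\Big\}.
\]

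Since each step is a direct manipulation, there is no serious obstacle; the only thing to check carefully is the $\sqrt{2}$-bookkeeping in the inversion of $\C$ and the fact that the reparametrization $\varphi=\eta_+/\sqrt{2}$ does not change the subspace, only its parametrization. Closedness of $\mathcal{W}$ in $\H_b$ is automatic: $\C$ is a unitary on $\H_b$, so $\C^{-1}$ preserves closedness, and the graph of a bounded operator on a Hilbert space is closed; this matches the hypothesis that $\mathcal{W}$ be closed.
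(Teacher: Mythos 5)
Your proof follows exactly the same route as the paper: reduce to Proposition~\ref{pro:W}(ii) via Lemma~\ref{lemma-iso}(i) and the Cayley transform, then pull the graph of $U$ back through $\C^{-1}$. In fact you are more explicit than the paper (which states the conclusion in one line), and your inversion of $\C$ and the $\sqrt{2}$-reparametrization are both correct.
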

\begin{proof}
By Lemma~\ref{lemma-iso}~(i) and Propostion~\ref{pro:W}~(ii) we have that $\mathcal{W}$ is maximally $\Sigma$-isotropic iff $\mathcal{W}=\C^{-1}\mathcal{W}_c$, where $\mathcal{W}_c$ is given by Eq.~(\ref{eq:U}).
\end{proof}

\begin{proposition}\label{prop: asorey}
Let $U\colon  \H^0(\pO)\to \H^0(\pO)$ be a unitary operator and consider the maximally isotropic subspace $\mathcal{W}$ given in Theorem~\ref{teo:parametriceW}. Then $\mathcal{W}$ can be rewritten as
		\begin{equation}\label{eq:asorey}
			\mathcal{W}=\Big\{(\varphi_1\,,\,\varphi_2)\in\H_b\mid \varphi_1-\mathbf{i}\varphi_2= U(\varphi_1+\mathbf{i}\varphi_2)\Big\}\;.
		\end{equation}
\end{proposition}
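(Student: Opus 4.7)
The statement is an identity between two descriptions of the same subspace, so the plan is a direct algebraic verification of the two inclusions, using only the unitarity of $U$.

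First I would show that elements of the parametric description satisfy the boundary equation. Starting from $(\varphi_1,\varphi_2) = \bigl((\1+U)\varphi,\,-\mathbf{i}(\1-U)\varphi\bigr)$, I would compute the two combinations directly:
\begin{align*}
\varphi_1 + \mathbf{i}\varphi_2 &= (\1+U)\varphi + (\1-U)\varphi = 2\varphi, \\
\varphi_1 - \mathbf{i}\varphi_2 &= (\1+U)\varphi - (\1-U)\varphi = 2U\varphi,
\end{align*}
so that $\varphi_1 - \mathbf{i}\varphi_2 = U(\varphi_1 + \mathbf{i}\varphi_2)$, which places $(\varphi_1,\varphi_2)$ in the right-hand side set of \eqref{eq:asorey}.

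Conversely, given $(\varphi_1,\varphi_2)\in\H_b$ satisfying the boundary equation $\varphi_1 - \mathbf{i}\varphi_2 = U(\varphi_1 + \mathbf{i}\varphi_2)$, I would define $\varphi := \tfrac{1}{2}(\varphi_1 + \mathbf{i}\varphi_2) \in \H^0(\pO)$ and verify directly that
\[
(\1+U)\varphi = \tfrac{1}{2}(\varphi_1+\mathbf{i}\varphi_2) + \tfrac{1}{2}U(\varphi_1+\mathbf{i}\varphi_2) = \tfrac{1}{2}(\varphi_1+\mathbf{i}\varphi_2) + \tfrac{1}{2}(\varphi_1-\mathbf{i}\varphi_2) = \varphi_1,
\]
and analogously $-\mathbf{i}(\1-U)\varphi = -\mathbf{i}\cdot\tfrac{1}{2}\bigl((\varphi_1+\mathbf{i}\varphi_2) - (\varphi_1-\mathbf{i}\varphi_2)\bigr) = \varphi_2$. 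This exhibits $(\varphi_1,\varphi_2)$ as an element of the parametric description.

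Both inclusions being elementary identities in $\H^0(\pO)$, there is no substantive obstacle; the only point worth noting is that unitarity of $U$ is not used in either direction of the equivalence of the two \emph{sets}, only that $U$ is a linear operator on $\H^0(\pO)$. Unitarity is of course what made the parametric form maximally $\Sigma$-isotropic in Theorem~\ref{teo:parametriceW}; here it is inherited from that theorem.
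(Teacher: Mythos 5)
Your proof is correct, and your treatment of the reverse inclusion is genuinely different from (and more direct than) the paper's. The forward inclusion is the same in both: plug the parametrization into the boundary equation. For the converse, you explicitly invert the parametrization by setting $\varphi=\tfrac{1}{2}(\varphi_1+\mathbf{i}\varphi_2)$ and checking $(\1+U)\varphi=\varphi_1$, $-\mathbf{i}(\1-U)\varphi=\varphi_2$; the paper instead shows that $\mathcal{W}^\perp\cap\mathcal{W}'=\{0\}$ by assembling the two conditions into a $2\times 2$ operator matrix $M$ built from $U$ and $U^*$, observing that $M$ is (a multiple of) a unitary and hence injective, and then concluding $\mathcal{W}=\mathcal{W}'$ from $\mathcal{W}\subset\mathcal{W}'$ together with closedness of $\mathcal{W}$ and an orthogonal-complement argument. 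Your route buys brevity and transparency: it avoids the closedness and complementation bookkeeping entirely, and it makes visible your closing observation that the set identity uses only linearity of $U$ (the paper's argument, by contrast, invokes unitarity essentially, since the injectivity of $M$ rests on $U^*U=UU^*=\1$). What the paper's longer route buys is mainly consistency with the orthogonality techniques already deployed in Proposition~\ref{pro:W}; it does not yield anything stronger here. One small caveat: your parenthetical that unitarity ``is inherited from that theorem'' is the right thing to say, since maximal $\Sigma$-isotropy of $\mathcal{W}$ is of course not a consequence of the set identity alone.
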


\begin{proof}
Let $\mathcal{W}$ be given as in Theorem~\ref{teo:parametriceW} and let $\mathcal{W}'$ be a subspace defined as in Eq.~\eqref{eq:asorey}. Put $\varphi_1:=(\1+U)\varphi$ and $\varphi_2:=-\mathbf{i}(\1-U)\varphi$. Then it is straightforward to verify that $(\varphi_1\,,\,\varphi_2)$ satisfy the relation defining Eq.~ \eqref{eq:asorey} and therefore $\mathcal{W}\subset \mathcal{W}'$.

Consider a subspace $\mathcal{W}'$ defined as in Eq.~\eqref{eq:asorey} and let $(\varphi_1\,,\,\varphi_2)\in \mathcal{W}'$. Then the following relation holds
\begin{equation}\label{WW'1}
	(1-U)\varphi_1-\mathbf{i}(1+U)\varphi_2=0\;.
\end{equation}
Now consider that $(\varphi_1,\varphi_2)\in \mathcal{W}^\perp$. Then for all $\varphi\in\H^0(\pO)$
\begin{align*}
	0	&=\scalar{\varphi_1}{(1+U)\varphi}+\scalar{\varphi_2}{-\mathbf{i}(1-U)\varphi}\\
		&=\scalar{(1+U^*)\varphi_1+\mathbf{i}(1-U^*)\varphi_2}{\varphi}
\end{align*}
and therefore
\begin{equation}\label{WW'2}
	(1+U^*)\varphi_1+\mathbf{i}(1-U^*)\varphi_2=0\;.
\end{equation}
Now we can arrange Eqs.~\eqref{WW'1} and \eqref{WW'2}
\begin{equation}\label{M:equation}
M\begin{pmatrix}
\varphi_1\\\varphi_2
\end{pmatrix}
:=
\begin{pmatrix}
1-U & -\mathbf{i}(1+U)\\1+U^* & \mathbf{i}(1-U^*)
\end{pmatrix}
\begin{pmatrix}
\varphi_1\\\varphi_2
\end{pmatrix}
=0\;,
\end{equation}
where now $M\colon \H_b\to\H_b$. But clearly $M$ is a unitary operator so that Eq.~\eqref{M:equation} implies that $(\varphi_1,\varphi_2)=0$ and therefore $\mathcal{W}\oplus \mathcal{W}'^\perp=(\mathcal{W}^\perp\bigcap \mathcal{W}')^\perp=\H_b$. This condition together with $\mathcal{W}\subset \mathcal{W}'$ implies $\mathcal{W}=\mathcal{W}'$ because $\mathcal{W}$ is a closed subspace, as it is easy to verify.
\end{proof}

\subsection{Admissible unitaries and closable quadratic forms}

In this subsection we will restrict to a family of unitaries $U\colon  \H^0(\pO)\to \H^0(\pO)$ that will allow us to describe a wide class
of quadratic forms whose Friedrichs' extensions are associated to self-adjoint extensions of the Laplace-Beltrami operator.
\begin{definition}\label{DefGap}
Let $U\colon  \H^0(\pO)\to\H^0(\pO)$ be unitary and denote by $\sigma(U)$ its spectrum. We say that the unitary $U$ on the boundary
\textbf{has gap at $-1$} if one of the following conditions hold:
\begin{enumerate}
\item $\1+U$ is invertible.
\item $-1\in\sigma(U)$ and $-1$ is not an accumulation point of $\sigma(U)$.
\end{enumerate}
\end{definition}

\begin{definition}\label{P,boundary}
Let $U$ be a unitary operator acting on $\H^0(\pO)$ with gap at $-1$.
Let $E_\lambda$ be the spectral resolution of the identity associated to the unitary $U$,
i.e.,  $$U=\int_{[0,2\pi]}e^{\mathbf{i}\lambda}\d E_\lambda\;.$$
The \textbf{invertibility boundary space} $W$ is defined by
$W=\operatorname{Ran}E^{\bot}_{\{\pi\}}\,.$ The orthogonal projection onto $W$ is denoted by $P$.
\end{definition}

\begin{definition}\label{partialCayley}
Let $U$ be a unitary operator acting on $\H^0(\pO)$ with gap at $-1$. The \textbf{partial Cayley transform} $A_U:\H^0(\pO)\to W$ is
the operator
\[
A_U:=\mathbf{i}\,P (U-\mathbb{I}) (U+\mathbb{I})^{-1}\;.
\]

\end{definition}

\begin{proposition}
The partial Cayley transform is a bounded, self-adjoint operator on $\H^0(\pO)$.
\end{proposition}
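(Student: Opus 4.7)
The plan is to re-interpret $A_U$ through the Borel functional calculus for the unitary $U$ and then read off both boundedness and self-adjointness from properties of a single real-valued Borel function. Specifically, I would define $f\colon[0,2\pi]\to\mathbb{R}$ by $f(\pi):=0$ and, for $\lambda\neq\pi$,
\begin{equation*}
f(\lambda):=\mathbf{i}\,\frac{e^{\mathbf{i}\lambda}-1}{e^{\mathbf{i}\lambda}+1}=-\tan(\lambda/2),
\end{equation*}
and set $A_U:=f(U)=\int_{[0,2\pi]}f(\lambda)\,\d E_\lambda$ through the spectral theorem.

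The first step is to exploit the gap at $-1$: by Definition~\ref{DefGap} there exists $\delta>0$ such that $\sigma(U)\cap\{e^{\mathbf{i}\lambda}:0<|\lambda-\pi|<\delta\}=\emptyset$, and hence $|f(\lambda)|\leq\csc(\delta/2)$ on the support of $\d E_\lambda$ in $[0,2\pi]\setminus\{\pi\}$. Combined with $f(\pi)=0$, this shows that $f\in L^\infty(\sigma(U),\d E)$, so the functional calculus yields a bounded operator $A_U=f(U)$ with $\|A_U\|\leq\csc(\delta/2)$. Self-adjointness is then automatic: $f$ is real-valued, so $A_U^*=\bar f(U)=f(U)=A_U$.

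What remains is to verify that this $A_U$ agrees with the expression $\mathbf{i}P(U-\mathbb{I})(U+\mathbb{I})^{-1}$ of Definition~\ref{partialCayley}. The subspace $W=\operatorname{Ran}E_{\{\pi\}}^\perp$ reduces $U$, and on $W$ the spectrum of $U|_W$ stays at distance $\geq\delta$ from $-1$, so $(U+\mathbb{I})|_W$ is boundedly invertible and the spectral expression is exactly $f(U|_W)$. On $W^\perp=\ker(U+\mathbb{I})$ the projection $P$ kills the output, matching the assignment $f(\pi)=0$. Hence on each reducing subspace of the orthogonal decomposition $\H^0(\pO)=W\oplus W^\perp$ both formulas coincide.

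The only point of substance is the well-definedness of $(U+\mathbb{I})^{-1}$ when $-1\in\sigma(U)$; this is precisely what the gap hypothesis resolves, by isolating $-1$ in $\sigma(U)$ and allowing the projection $P$ to absorb the singular direction so that the functional-calculus expression extends to a bounded operator on the whole of $\H^0(\pO)$.
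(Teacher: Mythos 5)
Your proof is correct and follows essentially the same route as the paper: both express $A_U$ via the spectral resolution of $U$ as the integral of the real bounded function $-\tan(\lambda/2)$ over the part of the spectrum bounded away from $-1$, obtaining boundedness from the gap and self-adjointness from the real-valuedness of the symbol. Your extra step checking that $f(U)$ agrees with $\mathbf{i}P(U-\mathbb{I})(U+\mathbb{I})^{-1}$ on the reducing decomposition $W\oplus W^{\perp}$ is a welcome tightening of what the paper leaves implicit, but it is not a different argument.
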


\begin{proof}
First notice that the operators $P$, $U$ and $A_U$ commute. That $A_U$ is bounded is a direct consequence of the Definition~\ref{DefGap},
because the operator $P(\mathbb{I}+U)$ is under these assumptions an invertible bounded operator on the boundary space $W$.
To show that $A_U$ is self-adjoint
consider the spectral resolution of the identity of the operator $U$. Since $U$ has gap at $-1$,
either $\{e^{\mathbf{i\pi}}\}\not\in\sigma(U)$ or there
exists a neighborhood $V$ of $\{e^{\mathbf{i}\pi}\}$ such that it does not contain any element of the spectrum $\sigma(U)$ besides
$\{e^{\mathbf{i}\pi}\}$. Pick $\delta\in V\cap S^1$. Then one can express the operator $A_U$ using the spectral resolution of the identity of the
operator $U$ as
$$A_U=\int_{-\pi+\delta}^{\pi-\delta}\mathbf{i}\frac{e^{\mathbf{i}\lambda}-1}{e^{\mathbf{i}\lambda}+1}\d E_\lambda
=\int_{-\pi+\delta}^{\pi-\delta}-\tan\frac{\lambda}{2}\d E_\lambda\;.$$
Since $\lambda\in[-\pi+\delta,\pi-\delta]$, then $\tan\frac{\lambda}{2}\in\mathbb{R}$.
Therefore the spectrum of $A_U$ is a subset of the real line, necessary and sufficient condition for a closed, symmetric operator to be self-adjoint.
\end{proof}

We can now introduce the class of closable quadratic forms that was announced at the beginning of this section.

\begin{definition}\label{DefQU}
Let $U$ be a unitary with gap at $-1$, $A_U$ the corresponding partial Cayley transform and $\gamma$
the trace map considered in Theorem~\ref{LMtracetheorem}.
The Hermitian quadratic form associated to the unitary $U$ is defined by
$$Q_U(\Phi,\Psi)=\scalar{\d\Phi}{\d\Psi}-\scalarb{\gamma(\Phi)}{A_U\gamma(\Phi)}\;.$$
on the domain
$$\D_U=\bigl\{ \Phi\in\H^1(\Omega)\bigr|P^{\bot}\gamma(\Phi)=0 \bigr\}\;.$$
\end{definition}

\begin{proposition}\label{H1bound}
The quadratic form $Q_U$ is bounded by the Sobolev norm of order 1, $$Q_U(\Phi,\Psi)\leq K\norm{\Phi}_1\norm{\Psi}_1\;.$$
\end{proposition}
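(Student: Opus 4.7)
The plan is to estimate $Q_U(\Phi,\Psi)$ as a sum of two contributions, the bulk term $\langle d\Phi,d\Psi\rangle$ and the boundary term $\langle\gamma(\Phi),A_U\gamma(\Psi)\rangle_{\partial\Omega}$, and show each is controlled by $\|\Phi\|_1\|\Psi\|_1$.

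For the bulk term I would apply Cauchy--Schwartz on $\Lambda^1(\Omega)$ to get $|\langle d\Phi,d\Psi\rangle|\leq\|d\Phi\|\,\|d\Psi\|$, and then invoke Proposition~\ref{equivalentsobolev}, which says the $\H^1$-norm is equivalent to $\sqrt{\|d\cdot\|^2+\|\cdot\|^2}$; in particular $\|d\Phi\|\leq C_1\|\Phi\|_1$ for some constant $C_1$, and similarly for $\Psi$. This handles the first summand with an absolute constant.

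For the boundary term I would first use the fact, proved in the preceding proposition, that the partial Cayley transform $A_U$ is a bounded self-adjoint operator on $\H^0(\partial\Omega)$, with operator norm $\|A_U\|$ that depends only on how far $-1$ is from the rest of $\sigma(U)$. Applying Cauchy--Schwartz in $\H^0(\partial\Omega)$ and the boundedness of $A_U$ gives
\[
\bigl|\langle\gamma(\Phi),A_U\gamma(\Psi)\rangle_{\partial\Omega}\bigr|
\leq \|A_U\|\,\|\gamma(\Phi)\|_{\H^0(\partial\Omega)}\,\|\gamma(\Psi)\|_{\H^0(\partial\Omega)}.
\]
Then I would invoke the Lions trace theorem (Theorem~\ref{LMtracetheorem}) to obtain a continuous map $\gamma\colon\H^1(\Omega)\to\H^{1/2}(\partial\Omega)$, and compose with the continuous embedding $\H^{1/2}(\partial\Omega)\hookrightarrow\H^0(\partial\Omega)$, yielding a constant $C_2$ with $\|\gamma(\Phi)\|_{\H^0(\partial\Omega)}\leq C_2\|\Phi\|_1$.

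Combining the two estimates with $K:=C_1^2+\|A_U\|\,C_2^2$ produces the desired inequality $|Q_U(\Phi,\Psi)|\leq K\|\Phi\|_1\|\Psi\|_1$. There is no real obstacle here: everything follows by assembling continuity of the trace, boundedness of $A_U$, and the characterization of the $\H^1$-norm in Proposition~\ref{equivalentsobolev}. The only point that deserves a moment's care is that the statement implicitly uses that $\Phi,\Psi\in\D_U\subset\H^1(\Omega)$, so that $\gamma(\Phi)$ and $\gamma(\Psi)$ are well-defined elements of $\H^{1/2}(\partial\Omega)\subset\H^0(\partial\Omega)$ and $A_U$ can legitimately act on them.
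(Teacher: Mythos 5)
Your argument is correct and follows essentially the same route as the paper: Cauchy--Schwartz plus Proposition~\ref{equivalentsobolev} for the bulk term, and for the boundary term the operator bound $\|A_U\|$ on $\H^0(\pO)$ combined with the embedding $\H^{1/2}(\pO)\hookrightarrow\H^0(\pO)$ and the Lions trace theorem, exactly as in the paper's chain of three inequalities. No substantive difference.
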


\begin{proof}
That the first summand of $Q_U$
is bounded by the $\H^1(\Omega)$ norm is direct consequence of the Cauchy-Schwartz inequality and
Proposition~\ref{equivalentsobolev}.

For the second term we have that
\begin{align*}
|\scalarb{\gamma(\Phi)}{A_U\gamma(\Psi)}|	&\leq \norm{A_U}\cdot \norm{\gamma(\Phi)}_0\,\norm{\gamma(\Psi)}_0\\
&\leq C \norm{A_U}\cdot\norm{\gamma(\Phi)}_{\frac{1}{2}}\,\norm{\gamma(\Psi)}_{\frac{1}{2}}\\
&\leq C' \norm{A_U}\cdot\norm{\Phi}_{1}\norm{\Psi}_{1}\;,
\end{align*}
where we have used Theorem~\ref{LMtracetheorem} in the last inequality.
\end{proof}

Finally, we need an additional condition of admissibility on the unitaries on the boundary that will be needed
to prove the closability of $Q_U$.

\begin{definition}\label{def:admissible}
Let $U$ be a unitary with gap at $-1$. The unitary is said to be \textbf{admissible} if the partial Cayley transform
$A_U\colon\H^0(\pO)\to \H^0(\pO)$ is continuous with respect to the Sobolev norm of order $1/2$, i.e.,
$$\norm{A\varphi}_{\H^{1/2}(\pO)}\leq K \norm{\varphi}_{\H^{1/2}(\pO)}\;.$$
\end{definition}

\begin{example}
Consider a manifold with boundary given by the unit circle, i.e.,  $\partial\Omega=S^1$,
and define the unitary $(U_\beta\varphi)(z):=e^{i\beta(z)}\,\varphi(z)$, $\varphi\in L^2(S^1)$.
If $\beta\in L^2(S^1)$ and $\ran\beta\subset\{\pi\}\cup [0,\pi-\delta]\cup [\pi+\delta,2\pi)$, for some $\delta >0$, then $U_\beta$ has gap
at $-1$. If, in addition, $\beta\in C^\infty(S^1)$, then $U_\beta$ is admissible.
\end{example}


\section{Closable and semi-bounded quadratic forms}\label{sec:closable and semibounded qf}

This section addresses the questions of semi-boundedness and closability of
the quadratic form $Q_U$ defined on its domain $\D_U$
(cf.~Definition~\ref{DefQU}).

\subsection{Functions and operators on collar neighborhoods}

We will need first some technical results that refer to the functions and operators in a collar neighborhood
close to the boundary $\pO$ and that we will denote by $\Xi$.
Recall the conventions at the beginning of Section~\ref{sec:class}: if $\Phi\in\H^1(\Omega)$,
then $\varphi=\gamma(\Phi)$ denotes its restriction to $\pO$ and for $\Phi$ smooth, $\dot{\varphi}$ is the restriction to
the boundary of the normal derivative.

\begin{lemma}\label{Lemma approxdotphi}
Let $\Phi\in\H^1(\Omega)$, $f\in\H^{1/2}(\pO)$. Then, for every $\epsilon>0$ it exists $\tilde{\Phi}\in\C^\infty(\Omega)$ such that
$\norm{\Phi-\tilde{\Phi}}_1<\epsilon$, $\norm{\varphi-\tilde{\varphi}}_{\H^{1/2}(\pO)}<\epsilon$ and
$\norm{f-\dot{\tilde{\varphi}}}_{\H^{1/2}(\pO)}<\epsilon$\,.
\end{lemma}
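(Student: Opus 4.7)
The plan is to build $\tilde\Phi$ as a sum $\tilde\Phi = \Phi_0 + h$, where $\Phi_0 \in \C^\infty(\Omega)$ is a Sobolev approximation of $\Phi$ and $h$ is a smooth correction supported in a thin collar neighbourhood of $\partial\Omega$ whose role is to adjust the normal derivative to (approximately) $f$ without disturbing either the bulk $\H^1$ norm or the boundary trace of $\Phi_0$.

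First I would invoke density of $\C^\infty(\Omega)$ in $\H^1(\Omega)$ to fix $\Phi_0 \in \C^\infty(\Omega)$ with $\norm{\Phi - \Phi_0}_1 < \epsilon/3$. By the continuity part of Theorem~\ref{LMtracetheorem} (Lions), this automatically yields $\norm{\varphi - \varphi_0}_{\H^{1/2}(\pO)} \leq C \norm{\Phi - \Phi_0}_1$, which can be absorbed into the tolerance by shrinking $\epsilon/3$ further. Next, since $\C^\infty(\pO)$ is dense in $\H^{1/2}(\pO)$, choose $f_0 \in \C^\infty(\pO)$ with $\norm{f - f_0}_{\H^{1/2}(\pO)} < \epsilon/3$, and set $g := f_0 - \dot\varphi_0 \in \C^\infty(\pO)$, which is smooth because $\Phi_0$ is smooth.

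For the correction, work in a collar neighbourhood $\Xi \cong \pO \times [0,\epsilon_0)$ with normal coordinate $x$. Pick a cutoff $\rho \in \C_c^\infty([0,\epsilon_0))$ with $\rho \equiv 1$ near $0$, and for $\delta \in (0,\epsilon_0)$ define
\[
 h_\delta(y,x) := g(y)\,x\,\rho(x/\delta),
\]
extended by zero outside $\Xi$. By construction $h_\delta \in \C^\infty(\Omega)$, $h_\delta\bigr|_{\pO} = 0$, and $\partial_x h_\delta\bigr|_{\pO} = g(y)$. A direct computation in the collar coordinates — controlling each of $\abs{h_\delta}^2$, $\abs{\partial_x h_\delta}^2$ and $\abs{\nabla_y h_\delta}^2$, and using that the Riemannian volume in $\Xi$ is comparable to $\d\mu_{\partial\eta}\,\d x$ for small $\delta$ — yields
\[
 \norm{h_\delta}_1^2 \leq C\delta \norm{g}_{L^2(\pO)}^2 + C\delta^3 \norm{\nabla_y g}_{L^2(\pO)}^2,
\]
which can be made smaller than $(\epsilon/3)^2$ by choosing $\delta$ sufficiently small.

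Setting $\tilde\Phi := \Phi_0 + h_\delta$, the three estimates are then immediate: $\norm{\Phi - \tilde\Phi}_1 \leq \norm{\Phi - \Phi_0}_1 + \norm{h_\delta}_1 < \epsilon$; the trace is $\tilde\varphi = \varphi_0$ since $h_\delta$ vanishes on $\pO$, giving $\norm{\varphi - \tilde\varphi}_{\H^{1/2}(\pO)} < \epsilon$; and the normal derivative is $\dot{\tilde\varphi} = \dot\varphi_0 + g = f_0$, so $\norm{f - \dot{\tilde\varphi}}_{\H^{1/2}(\pO)} = \norm{f - f_0}_{\H^{1/2}(\pO)} < \epsilon$. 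The only delicate step is verifying that the bulk $\H^1$ norm of $h_\delta$ can be made small \emph{independently} of the size of $g$ in $\H^{1/2}(\pO)$; this is the main point of the argument and relies crucially on the factor $x$ in the definition of $h_\delta$, which makes the normal derivative at $x=0$ equal to $g$ while the $L^2$ mass of both $h_\delta$ and $\partial_x h_\delta$ scales with a positive power of the collar thickness $\delta$.
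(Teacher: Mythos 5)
Your proposal is correct and follows essentially the same route as the paper: both arguments reduce to a smooth bulk approximant and then add a collar-supported correction of the form (boundary function)$\times$(scaled profile in the normal coordinate), whose normal derivative at $\pO$ is pinned at the desired value while its $\H^1(\Omega)$-norm vanishes with the collar thickness. The only cosmetic differences are that the paper first reduces to $\Phi$ smooth with vanishing normal derivative near $\pO$ and uses the profile $n^{-1}g(-nr)$ (so the correction has an $O(1/n)$ trace rather than an exactly zero one), whereas you subtract $\dot\varphi_0$ explicitly and use $x\rho(x/\delta)$, which kills the trace exactly.
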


\begin{proof}
The first two inequalities are standard (cf., Theorem~\ref{LMtracetheorem}). Moreover, it
is enough to consider $\Phi\in\C^{\infty}(\Omega)$ with
$\d\Phi(\nu)\equiv0$, where $\nu\in\mathfrak{X}(\Omega)$ is the normal vector field, on a collar neighborhood $\Xi$ of $\pO$,
(see \cite[Chapter 4]{Hi76} for details on such neighbourhoods).
According to the proof of \cite[Theorem 7.2.1]{davies:95} this is a dense subset of $\H^1(\Omega)$.
The compactness assumption of $\Omega$ assures that the collar neighborhood has a minimal width $\delta$. Without loss of
generality we can consider that the collar neighborhood $\Xi$ has gaussian coordinates $\mathbf{x}=(r,\bt)$, being $\frac{\partial}{\partial r}$
the normal vector field pointing outwards. In particular,
we have that  $\Xi\simeq[-\delta,0]\times\pO$ and $\pO\simeq\{0\}\times\pO$. It is enough to consider $f\in\H^{1}(\pO)$, because $\H^1(\pO)$ is dense in $\H^{1/2}(\pO)$.

Consider a smooth function $g\in\C^\infty(\mathbb{R})$ with the following properties:
\begin{itemize}
\item $g(0)=1$ and $g'(0)=-1$\,.
\item $g(s)\equiv0$, $s\in[2,\infty)$\,.
\item $|g(s)|<1$ and $|g'(s)|<1$\,.
\end{itemize}

Define now the rescaled functions
$g_n(r):=\frac{1}{n}g(-nr)$. Let $\{f_n(\bt)\}_n\subset\C^\infty(\pO)$ be any sequence such that $\norm{f_n-f}_{\H^{1}(\pO)}\to 0$.
Now consider the smooth functions
\begin{equation}\label{smoothphi}
\tilde{\Phi}_n(\bx):=\Phi(\mathbf{x})+g_n(r)f_n(\bt)\;.
\end{equation}
Clearly we have that $\dot{\tilde{\varphi}}_n(\bt)\equiv f_n(\bt)$ and therefore $\norm{\dot{\tilde{\varphi}}_n-f}_{\H^1(\pO)}\to 0$ as needed.
Now we are going to show that $\tilde{\Phi}_n\stackrel{\H^1}{\to}\Phi$. According to Proposition \ref{equivalentsobolev} it is enough to show that
the functions and all their first derivatives converge in the $\H^0(\Omega)$ norm.
\begin{subequations}\label{H1convergence}
\begin{equation}
\norm{\tilde{\Phi}_n(\mathbf{x})-\Phi(\bx)}_{\H^0(\Omega)}=\norm{g_n(r)f_n(\bt)}_{\H^0([-\frac{2}{n},0]\times\pO)}
\leq\frac{2}{n^2}\norm{f_n}_{\H^0(\pO)}\;.
\end{equation}
\begin{equation}
\norm{\frac{\partial}{\partial r}\tilde{\Phi}_n(\mathbf{x})-\frac{\partial}{\partial r}\Phi(\bx)}_{\H^0(\Omega)}
\leq\norm{f_n(\bt)}_{\H^0([-\frac{2}{n},0]\times\pO)}\leq\frac{2}{n}\norm{f_n}_{\H^0(\pO)}\;.
\end{equation}
\begin{equation}
\norm{\frac{\partial}{\partial \theta}\tilde{\Phi}_n(\mathbf{x})-\frac{\partial}{\partial \theta}\Phi(\bx)}_{\H^0(\Omega)}
=\norm{g_n(r)\frac{\partial}{\partial \theta}f_n(\bt)}_{\H^0([-\frac{2}{n},0]\times\pO)}\leq\frac{2C}{n^2}\norm{f_n}_{\H^1(\pO)}\;.
\end{equation}
\end{subequations}

The constant $C$ in the last inequality comes from $\norm{\partial_\theta f_n}_{\H^0(\pO)}\leq C \norm{f_n}_{\H^1(\pO)}$. Since $\{f_n(\bt)\}$ is a convergent sequence in $\H^1(\pO)$
the norms appearing at the right hand sides are bounded.
\end{proof}

\begin{corollary}\label{Corsemibounded}
Let $\Phi\in\H^1(\Omega)$ and $c\in\mathbb{R}$. Then for every $\epsilon>0$
there exists a
$\tilde{\Phi}\in\C^{\infty}(\Omega)$ with $\dot{\tilde{\varphi}}=c\,\tilde{\varphi}$ such that
$\norm{\Phi-\tilde{\Phi}}_1<\epsilon$.
\end{corollary}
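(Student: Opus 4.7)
The plan is to first approximate $\Phi$ by a smooth function with vanishing normal derivative on a collar of $\pO$, and then perturb it in a shrinking collar so as to install the boundary identity $\dot{\tilde\varphi}=c\tilde\varphi$ exactly without paying more than $\epsilon$ in the $\H^1$-norm. Fix Gaussian collar coordinates $\bx=(r,\bt)\in\Xi\simeq[-\delta,0]\times\pO$ as in the proof of Lemma~\ref{Lemma approxdotphi}, with $\partial/\partial r$ the outward normal on $\pO=\{0\}\times\pO$. By the density statement invoked there (cf.\ \cite[Theorem~7.2.1]{davies:95}), pick $\Phi_0\in\C^\infty(\Omega)$ with $\partial_r\Phi_0\equiv 0$ on $\Xi$ and $\|\Phi-\Phi_0\|_1<\epsilon/2$, and set $\varphi_0:=\gamma(\Phi_0)\in\C^\infty(\pO)$.

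Next I would pick any $h\in\C^\infty(\mathbb{R})$ compactly supported in $[-1,1]$ with $h(0)=0$ and $h'(0)=1$ (for instance $h(s):=s\chi(s)$ for a standard cutoff $\chi$ equal to $1$ near $0$), and for $n\ge 1/\delta$ set $h_n(r):=\tfrac{1}{n}h(nr)$, so that $h_n(0)=0$, $h_n'(0)=1$, and $\mathrm{supp}\,h_n\subset[-1/n,1/n]$. Then I define
$$
\tilde\Phi(\bx):=\Phi_0(\bx)+c\,h_n(r)\,\varphi_0(\bt)\;.
$$
Since $h_n$ vanishes near the inner collar boundary, the correction extends by zero to a smooth function on $\Omega$, and hence $\tilde\Phi\in\C^\infty(\Omega)$. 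At $r=0$ one reads off $\tilde\varphi=\varphi_0+c\,h_n(0)\varphi_0=\varphi_0$ and $\dot{\tilde\varphi}=\partial_r\Phi_0|_{r=0}+c\,h_n'(0)\,\varphi_0=c\varphi_0=c\tilde\varphi$, which is the desired identity.

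The last step is to show $\|c\,h_n\varphi_0\|_1\to 0$ as $n\to\infty$. By Proposition~\ref{equivalentsobolev} and the fact that the Riemannian volume on $\Xi$ is comparable to $\d r\,\d\mu_{\partial\eta}$, it suffices to bound the three $L^2$-norms analogous to those in \eqref{H1convergence}. The scaling $h_n(r)=\tfrac{1}{n}h(nr)$ yields $\|h_n\|_{L^2([-\delta,0])}^2=O(n^{-3})$ and $\|h_n'\|_{L^2([-\delta,0])}^2=O(n^{-1})$, which combined with $\|\varphi_0\|_{\H^1(\pO)}<\infty$ gives $\|c\,h_n\varphi_0\|_1=O(n^{-1/2})$. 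Choosing $n$ large enough that this is below $\epsilon/2$ produces $\|\Phi-\tilde\Phi\|_1<\epsilon$. The only delicate point is to arrange $\tilde\varphi=\varphi_0$ and $\dot{\tilde\varphi}=c\varphi_0$ \emph{exactly} (not merely approximately as in Lemma~\ref{Lemma approxdotphi}), which is precisely what the conditions $h_n(0)=0$ and $h_n'(0)=1$ enforce.
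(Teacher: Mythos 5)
Your proposal is correct and follows essentially the same route as the paper: first approximate by a smooth function with vanishing normal derivative on a collar, then add a correction of the form $c\cdot(\text{radial profile})\cdot\varphi_0(\bt)$ whose profile vanishes at $r=0$ with normal derivative $1$ there, and check that the correction is $O(n^{-1/2})$ in $\H^1$. Your only (harmless, in fact slightly cleaner) deviation is choosing the profile $h_n$ compactly supported near the boundary, whereas the paper uses $g_n(r)-\tfrac1n$, which is a nonzero constant away from the boundary and forces the extra $\tfrac{c}{n}\vol(\Omega)\sup|\Phi(0,\bt)|$ terms in its estimates.
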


\begin{proof}
As in the proof of the preceding lemma it is enough to approximate any smooth function
$\Phi$ with vanishing normal derivative in a collar neighborhood.
Pick now a sequence of smooth functions $$\tilde{\Phi}_n(\bx):=\Phi(\bx)+c\Phi(0,\bt)\bigl(g_n(r)-\frac{1}{n}\bigr)\;,$$
where $g_n$ is the sequence of scaled functions defined in the proof of the preceding lemma.
This family of functions clearly verifies the boundary condition
$\dot{\tilde{\varphi}}=c\,\tilde{\varphi}$.
The inequalities \eqref{H1convergence} now read
\begin{align*}
\norm{\tilde{\Phi}_n(\mathbf{x})-\Phi(\bx)}_{\H^0(\Omega)}
&\leq\norm{c\Phi(0,\bt)\bigl(g_n(r)-\frac{1}{n}\bigr)}_{\H^0([-\frac{2}{n},0]\times\pO)}
  +\frac{c}{n}\vol(\Omega)\cdot\sup_{\Omega}|\Phi(0,\bt)|\\
&\leq\frac{2}{n^2}\norm{c\Phi(0,\bt)}_{\H^0(\pO)}+\frac{c}{n}\vol(\Omega)\cdot\sup_{\Omega}|\Phi(0,\bt)|\;.
\end{align*}
\begin{align*}
\norm{\frac{\partial}{\partial r}\tilde{\Phi}_n(\mathbf{x})-\frac{\partial}{\partial r}\Phi(\bx)}_{\H^0(\Omega)}
&\leq\norm{c\Phi(0,\bt)}_{\H^0([-\frac{2}{n},0]\times\pO)}\\
&\leq\frac{2c}{n}\norm{\Phi(0,\bt)}_{\H^0(\pO)}\;.
\end{align*}
\begin{align*}
\norm{\frac{\partial}{\partial \theta}\tilde{\Phi}_n(\mathbf{x})-\frac{\partial}{\partial \theta}\Phi(\bx)}_{\H^0(\Omega)}
&\leq\norm{c\bigl(g_n(r)-\frac{1}{n}\bigr)\frac{\partial}{\partial \theta}\Phi(0,\bt)}_{\H^0([-\frac{2}{n},0]\times\pO)}
  +\frac{c}{n}\vol(\Omega)\cdot\sup_{\Omega}| \frac{\partial \Phi(0,\bt)}{\partial \theta}|\\
&\leq\frac{2c}{n^2}\norm{\frac{\partial \Phi(0,\bt)}{\partial \theta}}_{\H^0(\pO)}
  +\frac{c}{n}\vol(\Omega)\cdot\sup_{\Omega}|\frac{\partial \Phi(0,\bt)}{\partial \theta}|\;.
\end{align*}
\end{proof}

\begin{corollary}\label{Corclosable}
Let $\{\Phi_n\}_n\subset\H^1(\Omega)$ and $A_U$ be the partial Cayley transform of an admissible unitary $U$.
Then it exists a sequence of smooth functions $\{\tilde{\Phi}_n\}\subset\C^{\infty}(\Omega)$ such that
$\norm{\Phi_n-\tilde{\Phi}_n}_{\H^1(\Omega)}<\frac{1}{n}$\,,
$\norm{\varphi_n-\tilde{\varphi}_n}_{\H^{1/2}(\pO)}<\frac{1}{n}$\,, and
$\norm{\dot{\tilde{\varphi}}_n-A_U\tilde{\varphi}_n}_{\H^{1/2}(\pO)}<\frac{1}{n}$\,.
\end{corollary}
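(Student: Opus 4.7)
The plan is to reduce this to Lemma~\ref{Lemma approxdotphi} applied at each $n$ with a carefully chosen target normal-derivative datum $f_n$, and then use the admissibility hypothesis to transfer the closeness of $\dot{\tilde\varphi}_n$ to $A_U\varphi_n$ into closeness to $A_U\tilde\varphi_n$.

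For each $n$, I would set $f_n := A_U\varphi_n$. This is a legitimate element of $\H^{1/2}(\pO)$: by the Lions trace theorem $\varphi_n = \gamma(\Phi_n) \in \H^{1/2}(\pO)$, and by admissibility (Definition~\ref{def:admissible}) $A_U$ maps $\H^{1/2}(\pO)$ boundedly into itself with some constant $K$. Apply Lemma~\ref{Lemma approxdotphi} to the pair $(\Phi_n,f_n)$ with tolerance $\varepsilon_n := \frac{1}{n(1+K)}$ to produce a smooth function $\tilde\Phi_n \in \C^\infty(\Omega)$ such that
\[
 \norm{\Phi_n-\tilde\Phi_n}_1<\varepsilon_n,\qquad
 \norm{\varphi_n-\tilde\varphi_n}_{\H^{1/2}(\pO)}<\varepsilon_n,\qquad
 \norm{f_n-\dot{\tilde\varphi}_n}_{\H^{1/2}(\pO)}<\varepsilon_n.
\]
The first two inequalities of the corollary are then immediate since $\varepsilon_n \leq 1/n$.

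The remaining inequality follows from a triangle estimate combined with admissibility:
\[
 \norm{\dot{\tilde\varphi}_n-A_U\tilde\varphi_n}_{\H^{1/2}(\pO)}
 \leq \norm{\dot{\tilde\varphi}_n-A_U\varphi_n}_{\H^{1/2}(\pO)}
     +\norm{A_U(\varphi_n-\tilde\varphi_n)}_{\H^{1/2}(\pO)}
 \leq \varepsilon_n + K\varepsilon_n = \frac{1}{n}.
\]
Here the first summand is exactly $\norm{f_n-\dot{\tilde\varphi}_n}_{\H^{1/2}(\pO)}$ and the second uses the $\H^{1/2}$-boundedness of $A_U$ with constant $K$.

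The only real subtlety is making sure the constant $K$ is fixed before choosing $\varepsilon_n$, so that the tolerance $\varepsilon_n = \frac{1}{n(1+K)}$ can be taken uniformly in $n$; this is exactly what admissibility guarantees. No further regularity on $\{\Phi_n\}$ is needed, and the boundary condition $P^\perp\varphi_n=0$ does not enter the argument — the corollary is a general approximation statement whose purpose is to supply, in the proof of closability of $Q_U$, smooth test functions that already nearly satisfy the Cayley-type boundary relation $\dot{\tilde\varphi}_n = A_U\tilde\varphi_n$.
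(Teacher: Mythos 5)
Your argument is correct and coincides with the paper's own proof: both take $f_n:=A_U\varphi_n\in\H^{1/2}(\pO)$, invoke Lemma~\ref{Lemma approxdotphi} with tolerance $\epsilon_n=\frac{1}{(1+\norm{A_U}_{\H^{1/2}(\pO)})\,n}$, and conclude the third estimate by the same triangle inequality using the $\H^{1/2}$-boundedness of $A_U$. Nothing is missing.
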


\begin{proof}
For $\Phi_{n_0}$, $n_0\in\mathbb{N}$, take the approximating smooth function
$\tilde{\Phi}_{n_0}$ as in Lemma~\ref{Lemma approxdotphi} with
$$f:=A_U\varphi_{n_0}\in\H^{1/2}(\pO)$$
(note that since $U$ is admissible we have indeed that $f\in\H^{1/2}(\pO)$, cf. Definition~\ref{def:admissible}).
Choose also $\epsilon>0$ such that
$$\epsilon\leq \frac{1}{(1+\norm{A_U}_{\H^{1/2}(\pO)})n_0}$$
and note that this implies $\epsilon\leq\frac{1}{n_0}$.
Then the first two inequalities follow directly from Lemma~\ref{Lemma approxdotphi}.
Moreover, we also have
\begin{align*}
\norm{\dot{\tilde{\varphi}}_{n_0}-A_U\tilde{\varphi}_{n_0}}_{\H^{1/2}(\pO)}&\leq \norm{\dot{\tilde{\varphi}}_{n_0}-A_U\varphi_{n_0}}_{\H^{1/2}(\pO)}
  +\norm{A_U\varphi_{n_0}-A_U\tilde{\varphi}_{n_0}}_{\H^{1/2}(\pO)}\\
&\leq \epsilon+ \norm{A_U}_{\H^{1/2}(\pO)}\norm{\varphi_{n_0}-\tilde{\varphi}_{n_0}}_{\H^{1/2}(\pO)}\leq (1+\norm{A_U}_{\H^{1/2}(\pO)})\epsilon
 \leq \frac{1}{n_0}\;.
\end{align*}
\end{proof}

For the analysis of the semi-boundedness and closability of the quadratic form $(Q_U,\D_U)$ defined in the previous section
we need to analyze first the following one-dimensional problem in an interval. The operator is defined with Neumann conditions on one end of the interval and Robin-type conditions on the other end.

\begin{definition}\label{Defunidimensional}
Consider the interval $I=[0,2\pi]$ and a real constant $c\in\mathbb{R}$.
Define the second order differential operator
$$  R\colon\D(R)\to\H^0([0,2\pi]) \quad\text{by}\quad
    R=-\frac{\d^2}{\d r^2}
$$
on the domain
$$\D(R):=\left\{\Phi\in\C^{\infty}(I)\;\Bigr|\;\; \frac{\partial \Phi}{\partial r}\bigr|_{r=0}=0
         \quad\text{and}\quad \frac{\partial \Phi}{\partial r}\bigr|_{r=2\pi}
=c\Phi|_{r=2\pi} \right\}\subset\H^0([0,2\pi])\;.$$
\end{definition}

\begin{proposition}\label{prop: intervalrobin}
The symmetric operator $R$ of Definition~\ref{Defunidimensional} is essentially self-adjoint with discrete
spectrum and semi-bounded below with lower bound $\Lambda_0$\,.
\end{proposition}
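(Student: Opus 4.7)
The approach is to study $R$ via its associated quadratic form and invoke Kato's representation theorem (Theorem~\ref{fundteo}). A single integration by parts shows that on $\D(R)$ one has $\scalar{\Phi}{R\Phi}=q(\Phi)$, where
\[
  q(\Phi):=\int_0^{2\pi}|\Phi'(r)|^2\,\d r-c\,|\Phi(2\pi)|^2,\qquad \Phi\in \H^1([0,2\pi]),
\]
the Neumann condition at $r=0$ killing one boundary term and the Robin condition at $r=2\pi$ converting the other into the finite-dimensional correction. The first task is to prove that $q$, regarded as a Hermitian sesquilinear form on $\H^1([0,2\pi])$, is semi-bounded and closed. Semi-boundedness follows from the standard one-dimensional trace inequality: for each $\epsilon>0$ there exists $C_\epsilon>0$ with $|\Phi(2\pi)|^2\le\epsilon\norm{\Phi'}^2+C_\epsilon\norm{\Phi}^2$, so that choosing $\epsilon=1/(2|c|+1)$ yields $q(\Phi)\ge\Lambda_0\,\norm{\Phi}^2$ for a suitable constant $\Lambda_0\in\mathbb{R}$. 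Closedness is immediate since $\normm{\cdot}_q$ is then equivalent to the standard norm of $\H^1([0,2\pi])$, which is complete.

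Kato's theorem produces a unique self-adjoint operator $\tilde R$ with the same lower bound $\Lambda_0$ whose quadratic form is $q$. Testing the identity $q(\Psi,\Phi)=\scalar{\Psi}{\tilde R\Phi}$ first against $\Psi\in \C_c^{\infty}((0,2\pi))$ identifies $\tilde R\Phi=-\Phi''$ in the distributional sense, hence $\Phi\in\H^2([0,2\pi])$; allowing $\Psi$ to carry arbitrary endpoint data then reads off
\[
  \D(\tilde R)=\bigl\{\Phi\in \H^2([0,2\pi])\mid \Phi'(0)=0,\ \Phi'(2\pi)=c\,\Phi(2\pi)\bigr\}.
\]
Discreteness of $\sigma(\tilde R)$ follows from the Rellich compact embedding $\H^1([0,2\pi])\hookrightarrow \H^0([0,2\pi])$: the form domain is compactly embedded in the ambient Hilbert space, so the resolvent of $\tilde R$ is compact and $\sigma(\tilde R)$ consists of a sequence of eigenvalues accumulating only at $+\infty$, bounded below by $\Lambda_0$.

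The remaining step, and the main technical point, is to verify that $\overline R=\tilde R$, i.e., that $\D(R)$ is a core for $\tilde R$ in the graph norm. Given $\Phi\in\D(\tilde R)\subset \H^2([0,2\pi])$, the plan is to mollify $\Phi$ in the interior to gain smoothness and then superimpose two smooth bump-function corrections supported near $r=0$ and $r=2\pi$ so as to restore the prescribed values of $\Phi'(0)$ and of $\Phi'(2\pi)-c\,\Phi(2\pi)$, in direct analogy with the collar-neighborhood construction of Lemma~\ref{Lemma approxdotphi} and Corollary~\ref{Corsemibounded}. This is the main obstacle: the two corrections must enforce both endpoint conditions simultaneously while preserving $\H^2$-convergence. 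However, since the problem is regular and one-dimensional, the required estimates reduce to elementary scaling arguments for the bump functions, strictly simpler than the manifold case treated earlier.
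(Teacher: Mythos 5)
Your proof is correct, but it follows a genuinely different route from the paper's. You work entirely on the form side: integrate by parts once, prove semi-boundedness and closedness of $q(\Phi)=\norm{\Phi'}^2-c|\Phi(2\pi)|^2$ on $\H^1([0,2\pi])$ via the one-dimensional trace inequality, invoke Kato's representation theorem, get discreteness from the compact embedding $\H^1\hookrightarrow\H^0$, and then recover essential self-adjointness of $R$ itself by showing $\D(R)$ is a graph-norm core of the representing operator. The paper instead simply \emph{cites} essential self-adjointness from the literature, obtains discreteness from the stability of the essential spectrum under self-adjoint extensions of an operator with finite deficiency indices, and proves semi-boundedness by explicitly solving the eigenvalue equation $-(\mathbf{i}\lambda+c)e^{-2\pi\mathbf{i}\lambda}+(\mathbf{i}\lambda-c)e^{2\pi\mathbf{i}\lambda}=0$, which in addition shows that $R$ is positive for $c\le 0$ and has exactly one negative eigenvalue for $c>0$. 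Your approach is more self-contained (it does not outsource essential self-adjointness) at the price of the core argument, which you only sketch; the sketch is sound, since in one dimension an $\H^2$ function obeying the two endpoint conditions can be approximated in $\H^2$ by smooth functions and the resulting small boundary defects removed by adding fixed bump functions with disjoint supports near each endpoint, multiplied by coefficients tending to zero. Two minor points: your $\Lambda_0$ as produced by the trace inequality is a priori only \emph{some} lower bound, not the sharp one, so to match the statement you should redefine $\Lambda_0$ as the bottom of the (discrete) spectrum, which Kato's theorem identifies with the sharp lower bound of the closed form; and you lose the explicit spectral information (sign of $\Lambda_0$ as a function of $c$) that the paper's computation provides, though only $|\Lambda_0|$ is used later in Theorem~\ref{maintheorem1}.
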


\begin{proof}
It is well known that this operator together with this boundary conditions defines an essentially self-adjoint operator
(see, e.g., \cite{AIM05, BGP08, Grubb68}). We show next that its spectrum is semi-bounded below.
Its closure is a self-adjoint extension of the Laplace operator defined on $\H_0^2[0,2\pi]$.
The latter operator has finite dimensional deficiency indices and its Dirichlet extension is known to have
empty essential spectrum. According to \cite[Theorem 8.18]{We80} all the self-adjoint extensions of a
closed, symmetric operator with finite deficiency indices have the same essential spectrum and therefore the spectrum of $R$ is discrete.

Consider now the following spectral problem:
\begin{equation}
R\Phi=\Lambda\Phi,\quad \frac{\partial \Phi}{\partial r}\Bigr|_{r=0}=0,\quad \frac{\partial \Phi}{\partial r}\Bigr|_{r=2\pi}=c\Phi|_{r=2\pi}\;,
\end{equation}
with $c$ a real constant.
On general solutions $\Phi(r)=Ae^{\mathbf{i}\lambda r}+Be^{-\mathbf{i}\lambda r}$ we impose the boundary conditions.
For nonzero solutions we obtain the following relation
\begin{equation}\label{spectral function}
	-(\mathbf{i}\lambda+c)e^{-\mathbf{i}2\pi\lambda}+(\mathbf{i}\lambda-c)e^{\mathbf{i}2\pi\lambda}=0\;,
\end{equation}
where $\Lambda=\lambda^2\in\mathbb{R}$. The equation is symmetric under the interchange $\lambda\to-\lambda$.
It is therefore enough to consider either $\lambda\geq 0$ or $\lambda=\mathbf{i}\mu$ with $\mu>0$.
These two choices correspond to the positive and negative eigenvalues, respectively.
The imaginary part of Eq.~\eqref{spectral function} vanishes identically.
If $\lambda\geq 0$ its real
part takes the form $$\tan 2\pi\lambda= -\frac{c}{\lambda}\;,$$ which leads to infinite solutions for each
$c\in\mathbb{R}$ and therefore there are infinite positive eigenvalues.
If $\lambda=\mathbf{i}\mu$ we obtain from Eq.~\eqref{spectral function}
$$e^{-4\pi\mu}=\frac{\mu-c}{\mu+c}\;,$$
which has either no solution for $c<0$, the trivial solution $\mu=0$ for $c=0$
and exactly one negative solution for $c>0$. So the operator $R$ is positive for $c\leq0$ and semi-bounded below for $c>0$.
We denote the lowest possible eigenvalue by $\Lambda_0$.
\end{proof}

\begin{definition}\label{Deftensorproduct}
Consider the interval $I=[0,2\pi]$ and let $\{\Gamma_i(\bt)\}\subset\H^0(\pO)$ be an orthonormal basis.
Consider the following operator $A$ on the tensor product
$\H^0(I)\otimes\H^0(\pO)\simeq \H^0(I\times\pO)$
given by
$$A\colon\D(A)\to \H^0(I)\otimes\H^0(\pO) \quad\text{where}\quad A:=R\otimes\mathbb{I}\;,$$
on its natural domain
$$\D(A)=\Bigl\{\Phi\in\H^0(I)\otimes\H^0(\pO)\;\bigr|\;\; \Phi=\sum_{i=1}^n\Phi_i(r)\Gamma_i(\bt)\,,\;n\in\mathbb{N}\,,\;
               \Phi_i\in\D(R) \Bigr\}\;.$$
\end{definition}

\begin{proposition}\label{Asemibounded}
The operator $A$ is essentially self-adjoint, semi-bounded below and has the same lower bound $\Lambda_0$
as the operator $R$ of Proposition~\ref{prop: intervalrobin}.
\end{proposition}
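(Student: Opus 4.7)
The plan is to diagonalize $A$ using the spectral decomposition of $R$ and then verify that the tensor-product eigenbasis sits inside $\D(A)$, so that the closure of $A$ can be identified by a direct truncation argument.

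By Proposition~\ref{prop: intervalrobin}, $\bar R$ has purely discrete spectrum with eigenvalues $\Lambda_0\leq \Lambda_1\leq\cdots$; the corresponding eigenfunctions $\{\Phi_k\}_{k\geq 0}$ are smooth solutions of $-\Phi_k''=\Lambda_k\Phi_k$ satisfying the boundary conditions of Definition~\ref{Defunidimensional}, hence they lie in $\D(R)$ and form an orthonormal basis of $\H^0(I)$. Consequently $\{\Phi_k\otimes\Gamma_i\}_{k,i\in\mathbb{N}}$ is an orthonormal basis of $\H^0(I)\otimes\H^0(\pO)$, and every basis vector belongs to $\D(A)$. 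I would then define the self-adjoint operator $B$ on $\H^0(I)\otimes\H^0(\pO)$ via the spectral theorem by $B(\Phi_k\otimes\Gamma_i)=\Lambda_k\,\Phi_k\otimes\Gamma_i$ on its natural domain; its spectrum is $\{\Lambda_k\}_{k\geq 0}$, and it is bounded below by $\Lambda_0$.

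Next I would check the inclusion $A\subset B$: any $\Phi\in\D(A)$ has the form $\sum_{i=1}^n\Phi_i(r)\Gamma_i(\bt)$ with $\Phi_i\in\D(R)$, and expanding $\Phi_i=\sum_k\scalar{\Phi_k}{\Phi_i}\Phi_k$ in $\H^0(I)$ immediately gives $A\Phi=B\Phi$. Hence $\bar A\subset B$. For the reverse inclusion, I would show that $\D(A)$ is a core for $B$ by direct truncation: for $\psi=\sum_{k,i}c_{ki}\Phi_k\otimes\Gamma_i\in\D(B)$, the partial sums $\psi_N=\sum_{k\leq N,\,i\leq N}c_{ki}\Phi_k\otimes\Gamma_i$ all lie in $\D(A)$ (since $\Phi_k\in\D(R)$ and the sum over $i$ is finite), and the conditions $\sum|c_{ki}|^2<\infty$ and $\sum|\Lambda_k|^2|c_{ki}|^2<\infty$ yield $\psi_N\to\psi$ and $B\psi_N\to B\psi$. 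This places $\psi$ in $\D(\bar A)$, forcing $\bar A=B$, so $A$ is essentially self-adjoint and semi-bounded below with lower bound $\Lambda_0$.

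The only genuine subtlety I foresee is checking that the eigenfunctions of $\bar R$ really belong to $\D(R)$ rather than merely to $\D(\bar R)$; this uses the fact that $R$ is an elliptic ODE with smooth coefficients and that the boundary conditions defining $\D(R)$ are preserved by the eigenvalue equation. Once this is settled, the remaining steps are standard manipulations with orthonormal bases on tensor products.
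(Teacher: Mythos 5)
Your argument is correct, but it follows a different route from the paper's. The paper proves essential self-adjointness by showing directly that the deficiency spaces are trivial: for $\Psi\in\ker(A^\dagger\mp\mathbf{i})$ one expands $\Psi=\sum_i\Psi_i\Gamma_i$, tests against vectors of the form $\Phi_{i_0}\Gamma_{i_0}$, and concludes $\scalar{\Psi_{i_0}}{(R\pm\mathbf{i})\Phi_{i_0}}_{\H^0(I)}=0$ for all $\Phi_{i_0}\in\D(R)$, which forces $\Psi_{i_0}=0$ because $R\pm\mathbf{i}$ has dense range; semi-boundedness is then obtained in one line from $\scalar{\Phi}{A\Phi}=\sum_i\scalar{\Phi_i}{R\Phi_i}\geq\Lambda_0\norm{\Phi}^2$, using only the orthonormality of the $\Gamma_i$. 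You instead diagonalize: you use the discreteness of $\sigma(\bar R)$ to produce an eigenbasis $\{\Phi_k\}$, check $\Phi_k\in\D(R)$ (your flagged subtlety is real but harmless --- the eigenfunctions are explicitly $Ae^{\mathbf{i}\lambda r}+Be^{-\mathbf{i}\lambda r}$ in the proof of Proposition~\ref{prop: intervalrobin}, so smoothness and the boundary conditions are immediate), and identify $\bar A$ with the diagonal operator $B$ via a truncation/core argument. Your approach is somewhat longer and leans on the spectral theorem and on the discreteness of the spectrum of $R$, which the paper's deficiency-space argument does not need; in exchange it yields strictly more, namely the full spectrum of $\bar A$ (the set $\{\Lambda_k\}$ with infinite multiplicities) and, in particular, that the lower bound $\Lambda_0$ is actually attained --- a point the paper's inequality $\scalar{\Phi}{A\Phi}\geq\Lambda_0\norm{\Phi}^2$ leaves implicit, even though the statement asserts that the lower bounds of $A$ and $R$ coincide. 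Both proofs are sound; yours is the more informative, the paper's the more economical.
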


\begin{proof}
Let $\Psi\in\ker(A^\dagger\mp\mathbf{i})$ and consider its decompostion in terms of the orthonormal basis
$\{\Gamma_i(\theta)\}\subset\H^0(\pO)$ such that
$\Psi=\sum_{i=0}^\infty\Psi(r)_i\Gamma_i(\bt)$. We have that $\scalar{\Psi}{(A\pm\mathbf{i})\Phi}=0\quad\forall \Phi\in\D(A)\;.$
In particular for any $\Phi=\Phi_{i_0}\Gamma_{i_0}\in\D(A)$. Then
\begin{align*}
0=\scalar{\Psi}{(A\pm\mathbf{i})\Phi_{i_0}\Gamma_{i_0}}&=\sum_i^{\infty}\scalar{\Psi_i}{(R\pm\mathbf{i})
  \Phi_{i_0}}_{\H^0(I)}\scalar{\Gamma_i}{\Gamma_{i_0}}_{\H^{0}(\pO)}\\
&=\scalar{\Psi_{i_0}}{(R\pm\mathbf{i})\Phi_{i_0}}_{\H^0(I)}\quad\forall \Phi_{i_0}\in\D(R)\;.
\end{align*}
This implies that $\Psi_{i_0}=0$ because, by Proposition~\ref{prop: intervalrobin},
$R$ is essentially self-adjoint. Therefore
$\Psi=0$ and $A$ is essentially self-adjoint.

Finally we show the semi-boundedness condition.
Using the orthonormality of the basis $\{\Gamma_i(\bt)\}$
and for any $\Phi\in\D(A)$
we have that
$$\scalar{\Phi}{A\Phi}_{\H^0(I\times\pO)}=\sum_{i=1}^n\scalar{\Phi_i}{R\Phi_i}_{\H^0(I)}\geq\Lambda_0\sum_{i=1}^n\scalar{\Phi_i}{\Phi_i}_{\H^0(I)}
  =\Lambda_0\scalar{\Phi}{\Phi}_{\H^0(I\times\pO)}\;.$$
\end{proof}

\subsection{Quadratic forms and extensions of the minimal Laplacian}

We begin associating quadratic forms to some of the operators on a collar neighborhood of the precedent subsection.

\begin{lemma}\label{AdomainH1}
Denote by $Q_A$ the closed quadratic form represented by the closure of $A$. Then its domain $\D(Q_A)$
contains the Sobolev space of class 1. For any $\Phi\in\H^1(I\times\pO)\subset\D(Q_A)$ we have the expression
$$Q_A(\Phi)=
\int_{\pO}\Bigl[\int_I \frac{\partial \bar{\Phi}}{\partial r}\frac{\partial \Phi}{\partial r}\d r- c|\gamma(\Phi)|^2 \Bigr]\d\mu_{\partial \eta}
\;.$$
\end{lemma}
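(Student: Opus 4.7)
The plan is to combine an integration-by-parts computation on the core $\D(A)$ with an $\H^1$-approximation argument, using Kato's representation theorem to identify the closure.

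\textbf{Step 1: Explicit computation on $\D(A)$.} For any $\Phi=\sum_{i=1}^{n}\Phi_i(r)\Gamma_i(\bt)\in\D(A)$ I would apply Fubini together with integration by parts in the radial variable. The Neumann condition at $r=0$ kills the boundary term there, while the Robin condition at $r=2\pi$ produces the term $-c|\Phi_i(2\pi)|^2$. Using the orthonormality of $\{\Gamma_i\}$, this yields
\[
\scalar{\Phi}{A\Phi}_{\H^0(I\times\pO)}
=\sum_{i=1}^{n}\int_0^{2\pi}|\Phi_i'(r)|^2\,\d r-c\sum_{i=1}^{n}|\Phi_i(2\pi)|^2
=\int_{\pO}\Bigl[\int_I\bigl|\tfrac{\partial\Phi}{\partial r}\bigr|^2\d r-c|\gamma(\Phi)|^2\Bigr]\d\mu_{\partial\eta}\;.
\]
Call the right-hand side $\tilde Q(\Phi)$. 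By Proposition~\ref{Asemibounded}, the closure $\overline{A}$ is self-adjoint and semi-bounded with lower bound $\Lambda_0$, so by Kato's theorem (Theorem~\ref{fundteo}) the closure of the form $\scalar{\cdot}{A\cdot}$ on $\D(A)$ is precisely $Q_A$.

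\textbf{Step 2: Extend $\tilde Q$ to $\H^1(I\times\pO)$.} The expression $\tilde Q(\Phi)$ makes sense for every $\Phi\in\H^1(I\times\pO)$: the radial derivative lies in $\H^0$ and, by the Lions trace Theorem~\ref{LMtracetheorem}, $\gamma(\Phi)\in\H^{1/2}(\pO)\subset\H^0(\pO)$. Combining Cauchy--Schwarz with Proposition~\ref{equivalentsobolev} and the trace bound, I would obtain a constant $K$ such that
\[
|\tilde Q(\Phi)|\le \|\partial_r\Phi\|_{\H^0(I\times\pO)}^2+|c|\,\|\gamma(\Phi)\|_{\H^0(\pO)}^2\le K\|\Phi\|_1^2\;,
\]
so $\tilde Q$ is continuous with respect to the $\H^1$-norm. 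Consequently, for every $\Phi\in\H^1(I\times\pO)$ the form-norm quantity $\tilde Q(\Phi)+(1-\Lambda_0)\|\Phi\|^2$ is dominated by $\|\Phi\|_1^2$ (up to an additive constant).

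\textbf{Step 3: Density of $\D(A)$ in $\H^1(I\times\pO)$ in the $\H^1$-norm.} Finite sums $\sum_i\phi_i(r)\Gamma_i(\bt)$ with $\phi_i\in\C^\infty(I)$ are dense in $\H^1(I\times\pO)$ (by separation of variables using the basis $\{\Gamma_i\}$ and standard approximation in each Fourier mode). The remaining issue is to modify each $\phi_i$ so that $\phi_i'(0)=0$ and $\phi_i'(2\pi)=c\,\phi_i(2\pi)$. This is done via the collar-neighborhood cut-off technique already used in Lemma~\ref{Lemma approxdotphi} and Corollary~\ref{Corsemibounded}: rescaled bump functions $g_n$ supported in shrinking neighborhoods of the two endpoints adjust the normal derivative to the prescribed value while perturbing the function in the $\H^1(I)$-norm by $O(1/n)$. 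Carrying this out on each factor $\phi_i$ and controlling the finite sum produces an approximating sequence in $\D(A)$.

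\textbf{Step 4: Identification and main obstacle.} Given $\Phi\in\H^1(I\times\pO)$ and a sequence $\Phi_n\in\D(A)$ with $\Phi_n\to\Phi$ in $\H^1$, Step~2 gives $\Phi_n\to\Phi$ in the $Q_A$-form norm; hence $\Phi\in\D(Q_A)$ and $Q_A(\Phi)=\lim_n \scalar{\Phi_n}{A\Phi_n}=\lim_n \tilde Q(\Phi_n)=\tilde Q(\Phi)$, which is the stated formula. The point demanding the most care is Step~3: the boundary conditions defining $\D(A)$ couple both endpoints of $I$ and must be imposed simultaneously on each radial factor while keeping $\H^1$-convergence of the full tensor-product sum. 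The collar-neighborhood argument is tailored precisely for this (as the proof of Corollary~\ref{Corsemibounded} already shows for the Robin end), and the Neumann end is handled analogously.
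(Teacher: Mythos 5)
Your proposal is correct and follows essentially the same route as the paper: compute $\scalar{\Phi}{A\Phi}$ on the core $\D(A)$ by radial integration by parts, bound the form norm by the $\H^1(I\times\pO)$-norm via Proposition~\ref{equivalentsobolev} and the trace theorem, and conclude by density of $\D(A)$ in $\H^1(I\times\pO)$. Your Step~3 in fact spells out the endpoint-adjustment argument more explicitly than the paper, which simply invokes Corollary~\ref{Corsemibounded} for this density.
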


\begin{proof}
Let $\Phi\in\D(A)$. Then we have recalling the boundary conditions specified in the domain $\D(R)$ that
\begin{align}
Q_A(\Phi)&=\scalar{\Phi}{A\Phi}_{\H^0(I\times\pO)}=\sum_i\scalar{\Phi_i}{R\Phi_i}_{\H^0(I)}\notag\\
&=\sum_i\scalar{\frac{\partial \Phi_i}{\partial r}}{\frac{\partial \Phi_i}{\partial r}}_{\H^0(I)}-c\bar{\Phi}_i(0)\Phi_i(0)\notag\\
&=\int_{\pO}\Bigl[\int_I \frac{\partial \bar{\Phi}}{\partial r}\frac{\partial \Phi}{\partial r}\d r -c|\varphi|^2\Bigr]\d\mu_{\partial \eta}\;.
   \label{QFA}
\end{align}
Now it is easy to check that the graph norm of this quadratic form is dominated by the Sobolev norm of order 1, $\H^1(I\times\pO)$ \,.
\begin{align*}
\normm{\Phi}^2_{Q_A}&=(1+|\Lambda_0|)\norm{\Phi}^2_{\H^0(I\times\pO)}+Q_A(\Phi)\\
&\leq(1+|\Lambda_0|)\norm{\Phi}^2_{\H^0(I\times\pO)}+\int_{\pO}\int_I
     \frac{\partial \bar{\Phi}}{\partial r}\frac{\partial \Phi}{\partial r}\d r\d\mu_{\partial \eta} +c\norm{\varphi}^2_{\H^0(\pO)}\\
&\leq (1+|\Lambda_0|)\norm{\Phi}^2_{\H^0(I\times\pO)}+C\norm{\Phi}^2_{\H^1(I\times\pO)}\\
&\leq C'\norm{\Phi}^2_{\H^1(I\times\pO)}\;,
\end{align*}
where in the second step we have used again the equivalence appearing in Proposition \ref{equivalentsobolev} and Theorem \ref{LMtracetheorem}.
The above inequality shows that $\overline{\D(A)}^{\norm{\cdot}_1}\subset \D(Q_A)$. Moreover, Corollary~\ref{Corsemibounded}
states that $\D(A)$ is dense in $\H^1(I\times\pO)$. Hence the expression Eq.~\eqref{QFA} holds also on $\H^1(I\times\pO)$.
\end{proof}

\begin{theorem}\label{maintheorem1}
Let $U\colon\H^0(\pO)\to\H^0(\pO)$ be a unitary operator with gap at $-1$.
Then the quadratic form $Q_U$ of Definition \ref{DefQU} is semi-bounded below.
\end{theorem}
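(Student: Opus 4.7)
The strategy is to control the possibly negative boundary term $\scalarb{\gamma(\Phi)}{A_U\gamma(\Phi)}$ by a combination of the Dirichlet integral $\norm{d\Phi}^2$ and a multiple of $\norm{\Phi}^2$. Since $A_U$ is bounded and self-adjoint on $\H^0(\pO)$, I would first estimate $|\scalarb{\gamma(\Phi)}{A_U\gamma(\Phi)}|\leq c\,\norm{\gamma(\Phi)}^2_{\pO}$ with $c:=\norm{A_U}$, so it suffices to produce $K\in\mathbb{R}$ such that $\norm{d\Phi}^2-c\,\norm{\gamma(\Phi)}^2_{\pO}\geq K\norm{\Phi}^2$ for every $\Phi\in\D_U$.

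The plan is to localize near $\pO$ by means of a collar neighborhood and deploy the one-dimensional analysis of Proposition~\ref{Asemibounded} fiberwise. I would choose Gaussian normal coordinates $(r,\bt)\in[0,\epsilon]\times\pO$ on a collar $\Xi$, in which the Riemannian volume form factors as $\d\mu_\eta=\rho(r,\bt)\,\d r\,\d\mu_{\partial\eta}$ with $\rho$ smooth and $\rho(0,\bt)\equiv 1$, while the Dirichlet integrand obeys $|\d\Phi|^2_\eta\geq|\partial_r\Phi|^2$. Shrinking $\epsilon$ if necessary, compactness of $\pO$ gives $0<\rho_{\min}\leq\rho\leq\rho_{\max}$ on $\Xi$.

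Applying Lemma~\ref{AdomainH1} and Proposition~\ref{Asemibounded} on the product cylinder $[0,\epsilon]\times\pO$ with Robin constant $c':=c/\rho_{\min}$ at the $\pO$-end and Neumann at the inner end (the paper's interval $[0,2\pi]$ rescales trivially to $[0,\epsilon]$) would yield a constant $\Lambda_0=\Lambda_0(c',\epsilon)\in\mathbb{R}$ with
$$\int_\pO\!\!\int_0^\epsilon|\partial_r\Phi|^2\,\d r\,\d\mu_{\partial\eta}\;-\;\frac{c}{\rho_{\min}}\,\norm{\gamma(\Phi)}^2_{\pO}\;\geq\;\Lambda_0\int_\pO\!\!\int_0^\epsilon|\Phi|^2\,\d r\,\d\mu_{\partial\eta}.$$
Multiplying by $\rho_{\min}$, using $\int_\Xi|\partial_r\Phi|^2\,\d\mu_\eta\geq\rho_{\min}\int_\pO\!\int_0^\epsilon|\partial_r\Phi|^2\,\d r\,\d\mu_{\partial\eta}$ on the left and bounding the right hand side by $\norm{\Phi}^2$ via $\int_\pO\!\int_0^\epsilon|\Phi|^2\,\d r\,\d\mu_{\partial\eta}\leq\rho_{\min}^{-1}\norm{\Phi}^2$ (pushing the sign of $\Lambda_0$ through), I would arrive at $\int_\Xi|\partial_r\Phi|^2\,\d\mu_\eta\geq c\,\norm{\gamma(\Phi)}^2_{\pO}+\tilde\Lambda_0\,\norm{\Phi}^2$ for some $\tilde\Lambda_0\in\mathbb{R}$.

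The proof would conclude by a Neumann-type estimate on the bulk: $\norm{d\Phi}^2\geq\int_\Xi|\d\Phi|^2_\eta\,\d\mu_\eta\geq\int_\Xi|\partial_r\Phi|^2\,\d\mu_\eta$, where the nonnegative Dirichlet form on $\Omega\setminus\Xi$ is simply dropped. Combining the displays yields $Q_U(\Phi)\geq\norm{d\Phi}^2-c\,\norm{\gamma(\Phi)}^2_{\pO}\geq\tilde\Lambda_0\,\norm{\Phi}^2$, the desired semi-boundedness. The main obstacle will be the metric distortion in the collar: the factor $\rho_{\min}<1$ forces the shifted Robin constant $c/\rho_{\min}$ in the one-dimensional estimate, and one must verify that the corresponding lower bound $\Lambda_0$ stays finite (which is clear from the explicit transcendental equation appearing in the proof of Proposition~\ref{prop: intervalrobin}, whose smallest root depends continuously on $c'$ and $\epsilon$). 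Noticeably, the hypothesis that $U$ have gap at $-1$ enters only through the boundedness of $A_U$, so admissibility is not needed here; it will instead play its role in the closability statement.
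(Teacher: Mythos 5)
Your proposal is correct and follows essentially the same route as the paper's proof: restrict to a collar neighborhood, drop the nonnegative bulk and tangential contributions, control the metric density by constants close to its boundary value, and reduce to the one-dimensional Neumann--Robin comparison operator of Proposition~\ref{Asemibounded} and Lemma~\ref{AdomainH1} with a rescaled Robin constant ($c/\rho_{\min}$ in your notation, $\norm{A}/(1-\delta)$ in the paper's). Your closing observation that only the gap condition (hence boundedness of $A_U$) is used, and not admissibility, also matches the paper.
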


\begin{proof}
Let $(\Omega,\pO,\eta)$ be a compact, Riemannian manifold with boundary. One can always select a collar neighborhood $\Xi$ of the boundary with coordinates $(r,\bt)$ such that $\Xi\simeq[-L,0]\times\pO$ and where $$\eta(r,\bt)=\begin{bmatrix}1 & 0 \\ 0 & g(r,\bt) \end{bmatrix}\;.$$ The normal vector field to the boundary is going to be $\frac{\partial}{\partial r}$\,. With this choice the induced Riemannian metric at the boundary becomes $\partial\eta(\bt)\equiv g(0,\bt)$\,. The thickness $L$ of the collar neighborhood $\Xi$ can be also selected such that it exists $\delta\ll1$ that verifies
\begin{equation}\label{compactcollar}
	(1-\delta)\sqrt{|g(0,\bt)|}\leq \sqrt{|g(r,\bt)|}\leq (1+\delta)\sqrt{|g(0,\bt)|}\;.
\end{equation}
The quadratic form $Q_U$ can be adapted to this splitting. Let $\Phi\in\D_U\subset\H^1(\Omega)$. Obviously $\Phi|_\Xi\in\H^1(\Xi)\simeq\H^1(I\times\pO)$\,. In what follows, to simplify the notation and since there is no risk of confusion, the symbol $\Phi$ will stand for both $\Phi\in\H^1(\Omega)$ and $\Phi|_{\Xi}\in\H^1(\Xi)$.
\begin{subequations}
\begin{align}
Q_U(\Phi)&=\int_\Omega\eta^{-1}(\d\bar{\Phi},\d \Phi)\d\mu_\eta-\int_\pO\bar{\varphi}A\varphi\d\mu_{\partial \eta}\\
&=\int_\Xi\eta^{-1}(\d\bar{\Phi},\d \Phi)\d\mu_\eta+\int_{\Omega\backslash\Xi}\eta^{-1}(\d\bar{\Phi},\d \Phi)\d\mu_\eta
  -\int_\pO\bar{\varphi}A\varphi\d\mu_{\partial \eta}\\
&\geq \int_\Xi\eta^{-1}(\d\bar{\Phi},\d \Phi)\d\mu_\eta-\int_\pO\bar{\varphi}A\varphi\d\mu_{\partial \eta}\label{semibounded1}\\
&=\int_\pO\int_I\Bigl[\frac{\partial \bar{\Phi}}{\partial r}\frac{\partial \Phi}{\partial r}+ g^{-1}(\d_{\bt}\Phi,\d_{\bt}\Phi)\Bigr]
  \sqrt{|g(r,\bt)|}\d r\wedge\d\bt-\int_\pO\bar{\varphi}A\varphi\d\mu_{\partial \eta}\\
&\geq \int_\pO\int_I\frac{\partial \bar{\Phi}}{\partial r}\frac{\partial \Phi}{\partial r}\sqrt{|g(r,\bt)|}\d r\wedge\d\bt
  -\int_\pO\bar{\varphi}A\varphi\d\mu_{\partial \eta}\label{semibounded2}\\
&\geq (1-\delta)\int_\pO\int_I\frac{\partial \bar{\Phi}}{\partial r}\frac{\partial \Phi}{\partial r}
  \sqrt{|g(0,\bt)|}\d r\wedge\d\bt-\int_\pO\bar{\varphi}A\varphi\sqrt{|g(0,\bt)|}\d\bt\label{semibounded3}\\
&\geq(1-\delta)\int_\pO\Bigl[\int_I\frac{\partial \bar{\Phi}}{\partial r}\frac{\partial \Phi}{\partial r}\d r
  -\frac{\norm{A}}{(1-\delta)}|\varphi|^2\Bigr]\sqrt{|g(0,\bt)|}\d\bt\\
&\geq -|\Lambda_0|(1-\delta)\norm{\Phi}^2_{\H^0(I\times\pO)}\geq -|\Lambda_0|\frac{1-\delta}{1+\delta}\norm{\Phi}^2_{\H^0(\Xi)}\geq
  -|\Lambda_0|\frac{1-\delta}{1+\delta}\norm{\Phi}^2_{\H^0(\Omega)}\;.\label{semibounded4}
\end{align}\\
\end{subequations}	
In the step leading to \eqref{semibounded1} we have used the fact that the second term is positive. In the step leading to
\eqref{semibounded2} we have used that the second term in the first integrand is positive. Then \eqref{semibounded3} follows using
the bounds \eqref{compactcollar}. The last chain of inequalities follows by Proposition~\ref{Asemibounded} and Lemma~\ref{AdomainH1},
taking $c=\norm{A}/(1-\delta)$\,.
Notice that the semi-bound of Proposition \ref{Asemibounded} is always negative in this case because $c=\norm{A}/(1-\delta)>0$.
In Definition \ref{Defunidimensional} the interval $I$ was taken of length $2\pi$ whereas in this case it has length $L$.
This affects only in a constant factor that can be absorbed in the constant $c$ by means of a linear transformation of the
manifold $T\colon [0,2\pi]\to I$\,.
\end{proof}

\begin{theorem}\label{maintheorem2}
Let $U:\H^0(\pO)\to\H^0(\pO)$ be an admissible, unitary operator. Then the quadratic form $Q_U$ of Definition \ref{DefQU} is closable.
\end{theorem}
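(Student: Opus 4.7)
My plan is to prove closability by identifying $Q_U$, on a core of smooth functions satisfying the natural boundary condition exactly, with the quadratic form of a symmetric semi-bounded operator, and then invoking the classical fact that such forms are always closable (cf.\ Theorem X.23 in \cite{reed-simon-2}).

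I would first introduce
\[
\D_0 := \bigl\{\Phi\in\C^\infty(\Omega) : P^\perp\gamma\Phi=0\ \text{and}\ P\gamma(\d\Phi(\nu))=A_U\gamma\Phi\bigr\}\subset\D_U,
\]
and define $T\Phi:=-\Delta_\eta\Phi$ on $\D_0$. A direct integration by parts, combined with the self-adjointness of $A_U$ and the identity $A_U=PA_U$, shows that $\Sigma(\Phi,\Psi)=0$ for all $\Phi,\Psi\in\D_0$, so $T$ is symmetric. Green's formula further yields $Q_U(\Phi)=\langle\Phi,T\Phi\rangle$ on $\D_0$, and Theorem~\ref{maintheorem1} then forces $T$ to be semi-bounded below. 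The associated form $Q_T(\Phi,\Psi):=\langle\Phi,T\Psi\rangle$ with domain $\D_0$ is therefore closable, and I shall denote its closure by $(\overline{Q}_T,\overline{\D}_T)$.

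Next I would exhibit $(Q_U,\D_U)$ as a restriction of $(\overline{Q}_T,\overline{\D}_T)$. By Proposition~\ref{H1bound} and the semi-boundedness of $Q_U$, $\H^1(\Omega)$-convergence implies convergence in the $Q_U$-graph norm, so it suffices to show that $\D_0$ is $\H^1$-dense in $\D_U$. Given $\Phi\in\D_U$, Corollary~\ref{Corclosable} furnishes smooth $\tilde\Phi_n$ with $\|\Phi-\tilde\Phi_n\|_1\to 0$ and with the residual boundary errors $\|P^\perp\gamma\tilde\Phi_n\|_{\H^{1/2}(\pO)}$ and $\|P\dot{\tilde\varphi}_n-A_U\tilde\varphi_n\|_{\H^{1/2}(\pO)}$ both tending to zero. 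I would then construct a correction $\Psi_n\in\C^\infty(\Omega)$, supported in a collar of $\pO$ and of the form provided by Lemma~\ref{Lemma approxdotphi}, absorbing both residuals so that $\tilde\Phi_n+\Psi_n\in\D_0$ and $\|\Psi_n\|_1\to 0$.

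The hard part will be this final density step. The two boundary conditions $P^\perp\gamma=0$ and $P\gamma(\d\cdot(\nu))=A_U\gamma$ are coupled, so the correction cannot be achieved in a single application of Lemma~\ref{Lemma approxdotphi}. The natural iterative scheme---alternately killing the trace residual and the normal-derivative residual---converges only when the associated correction map is contractive on $\H^{1/2}(\pO)$. This is precisely where admissibility enters: the hypothesis $\|A_U\varphi\|_{\H^{1/2}(\pO)}\leq K\|\varphi\|_{\H^{1/2}(\pO)}$ provides the geometric control of the residuals needed to pass to the limit in $\H^{1/2}(\pO)$ and hence, by Lemma~\ref{Lemma approxdotphi}, in $\H^1(\Omega)$. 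Without admissibility only the $L^2(\pO)$-boundedness of $A_U$ would be available, which does not suffice to control the $\H^1(\Omega)$-norm of the correction.
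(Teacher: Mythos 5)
Your overall strategy --- realizing $Q_U$ as a restriction of the closure of the form of the symmetric, semi-bounded operator $-\Delta_\eta$ on a smooth core $\D_0$ with \emph{exact} boundary conditions --- is sound in its formal logic, and the first half is correct: symmetry of $T$ on $\D_0$ via the Lagrange form and the self-adjointness of $A_U$, the identity $Q_U(\Phi)=\langle\Phi,T\Phi\rangle$ on $\D_0$, semi-boundedness inherited from Theorem~\ref{maintheorem1}, and closability of $Q_T$ all go through (note $\C^\infty_c(\Omega)\subset\D_0$, so $T$ is densely defined). But the argument stands or falls with the $\H^1(\Omega)$-density of $\D_0$ in $\D_U$, and that step --- which you yourself flag as the hard part --- is not established, and I do not see how to establish it with the tools available. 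Two concrete obstructions. First, membership in $\D_0$ requires the trace of a \emph{smooth} function to lie exactly in $W=\operatorname{Ran}P$; but $P$ is merely the $L^2(\pO)$-spectral projection of $U$, so $W$ is an abstract closed subspace of $\H^0(\pO)$ that need not contain enough smooth functions nor interact well with $\H^{1/2}(\pO)$, and neither Lemma~\ref{Lemma approxdotphi} nor anything else in the paper lets you prescribe the \emph{trace} of a smooth approximant exactly --- the lemma only prescribes the normal derivative, and only up to $\epsilon$ in $\H^{1/2}(\pO)$. Second, your resolution of the coupling between the two boundary conditions is an iterative correction scheme said to converge when the correction map is contractive on $\H^{1/2}(\pO)$; admissibility gives only $\norm{A_U\varphi}_{\H^{1/2}(\pO)}\leq K\norm{\varphi}_{\H^{1/2}(\pO)}$ with an unspecified constant $K$ that may well exceed $1$, so no contraction, and hence no convergence of the scheme, is available. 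In unfavourable cases $\D_0$ could contain essentially only functions with vanishing trace, which are certainly not $\H^1$-dense in $\D_U$.

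The paper's proof avoids this entirely by never asking for exact boundary conditions. Given a $\normm{\cdot}_{Q_U}$-Cauchy sequence $\{\Phi_n\}\subset\D_U$ with $\norm{\Phi_n}\to0$, Corollary~\ref{Corclosable} produces smooth $\tilde{\Phi}_n$ that are $\H^1$-close to $\Phi_n$ and satisfy the Robin-type condition only asymptotically, $\norm{\dot{\tilde{\varphi}}_n-A_U\tilde{\varphi}_n}_{\H^{1/2}(\pO)}\to0$; admissibility is used precisely there, to guarantee $A_U\varphi_n\in\H^{1/2}(\pO)$ and to control $\norm{A_U(\varphi_n-\tilde{\varphi}_n)}_{\H^{1/2}(\pO)}$ --- not to produce a contraction. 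The residual boundary term is then killed in the limit by pairing $\H^{-1/2}(\pO)$ against $\H^{1/2}(\pO)$ (Proposition~\ref{proppairing} together with Theorem~\ref{weaktracetheorem}, since $\norm{\Phi_n}\to0$ places the limit in $\D(\Delta_{\mathrm{max}})$ and makes the bulk term vanish), which directly verifies the sequential closability criterion of Remark~\ref{Remclosable}. To rescue your route you would need an independent proof that the smooth functions satisfying the exact boundary conditions are $\H^1$-dense in $\D_U$; that is essentially the assertion that $\D_0$ is a form core for $Q_U$, a statement at least as strong as the theorem you are trying to prove.
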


\begin{proof}
According to Remark \ref{Remclosable} a quadratic form is closable iff for any $\Phi\in\overline{\D_U}^{\normm{\cdot}_{Q_U}}$
such that the corresponding Cauchy sequence $\{\Phi_n\}$ verifies $\norm{\Phi_n}\to0$ then
$Q(\Phi)=0$. Let $\Phi\in\overline{\D_U}^{\normm{\cdot}_{Q_U}}$. \\

(a) Lets show that it exist $\{\tilde{\Phi}_n\}\in\C^\infty(\Omega)$ such that $\normm{\Phi-\tilde{\Phi}_n}_{Q_U}\to 0$ and $\norm{\dot{\tilde{\varphi}}_n-A_U\tilde{\varphi}_n}_{\H^{1/2}(\pO)}\to0$\,. It exists $\{\Phi_n\}\in\D_U\subset\H^1(\Omega)$ such that $\normm{\Phi-\Phi_n}_{Q_U}\to 0$\,. For the sequence $\{\Phi_n\}$ take $\{\tilde{\Phi}_n\}\in\C^{\infty}(\Omega)$ as in Corollary \ref{Corclosable}. Then we have that
\begin{align*}
\normm{\Phi-\tilde{\Phi}_n}_{Q_U}&\leq \normm{\Phi-\Phi_n}_{Q_U}+\normm{\Phi_n-\tilde{\Phi}_n}_{Q_U}\\
&\leq \normm{\Phi-\Phi_n}_{Q_U}+K\norm{\Phi_n-\tilde{\Phi}_n}_1\;,
\end{align*}
where we have used Proposition \ref{H1bound}.\\

(b) Lets assume that $\norm{\Phi_n}\to0$. This implies that $\norm{\tilde{\Phi}_n}\to0$.
For every $\Psi\in\H^2_0=\D(\Delta_{\mathrm{min}})$ we have that
$$|\scalar{\Delta_{\mathrm{min}}\Psi}{\tilde{\Phi}_n}|\leq \norm{\Delta_{\mathrm{min}}\Psi}\norm{\tilde{\Phi}_n}\to 0\;.$$
Hence $\lim\tilde{\Phi}_n\in\D(\Delta_{\mathrm{min}}^\dagger)=\D(\Delta_{\mathrm{max}})$. According to
Theorem~\ref{weaktracetheorem} the traces of such functions exist and are elements of $\H^{-1/2}(\pO)$,
i.e., $\tilde{\varphi}_n\stackrel{\H^{-1/2}(\pO)}{\to}\tilde{\varphi}$\,.\\

(c) Finally we have that
\begin{align*}
Q_U(\Phi)&=\lim_{m\to\infty}\lim_{n\to\infty}\left[\scalar{\d\tilde{\Phi}_n}{\d\tilde{\Phi}_m}
           -\scalarb{\tilde{\varphi}_n}{A_U\tilde{\varphi}_m}\right]\\
&=\lim_{m\to\infty}\lim_{n\to\infty}\left[\scalar{\tilde{\Phi}_n}{-\Delta_\eta\tilde{\Phi}_m}+\scalarb{\tilde{\varphi}_n}{\dot{\tilde{\varphi}}_m}
   -\scalarb{\tilde{\varphi}_n}{A_U\tilde{\varphi}_m}\right]\\
&=\lim_{m\to\infty}\pairb{\tilde{\varphi}}{\dot{\tilde{\varphi}}_m-A_U\tilde{\varphi}_m}=0\;.
\end{align*}
Notice that in the last step we have used the continuous extension given in Proposition \ref{proppairing} of the scalar
product of the boundary $\scalarb{\cdot}{\cdot}$ to the pairing $\pairb{\cdot}{\cdot}:\H^{-1/2}(\pO)\times\H^{1/2}(\pO)\to\mathbb{C}$
associated to the scale of Hilbert spaces $\H^{1/2}(\pO)\subset\H^0(\pO)\subset\H^{-1/2}(\pO)$\,.
\end{proof}

Theorem \ref{maintheorem1} and Theorem \ref{maintheorem2} ensure that Theorem \ref{fundteo} applies and that the closure of the quadratic
form $Q_U$ for an admissible unitary $U$ is representable by means of a unique self-adjoint operator $T$,
with domain $\D(T)\subset\D(\overline{Q}_U):=\overline{\D_U}^{\normm{\cdot}_{Q_U}}$, i.e.,
$$\overline{Q}_U(\Psi,\Phi)=\scalar{\Psi}{T\Phi}\quad\Psi\in\D(\overline{Q}_U),\Phi\in\D(T)\;.$$
The following theorem establishes the relation between this operator $T$ and the Laplace-Beltrami operator.

\begin{theorem}\label{DeltaUextDeltamin}
Let $T$ be the self-adjoint operator with domain $\D(T)$ representing the closed quadratic form $\overline{Q}_U$ with domain $\D(\overline{Q}_U)$. The operator $T$ is a self-adjoint extension of the closed symmetric operator $-\Delta_{\mathrm{min}}$.
\end{theorem}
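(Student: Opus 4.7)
The strategy is to verify directly the characterization of $\D(T)$ afforded by part (i) of Kato's representation theorem (Theorem~\ref{fundteo}): I will show that every $\Phi$ in $\D(\Delta_{\mathrm{min}}) = \H^2_0(\Omega)$ lies in $\D(T)$ with $T\Phi = -\Delta_\eta \Phi$. The argument naturally splits into a test-function step followed by a closure step.

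First, I would reduce to smooth, compactly supported test functions. For $\Phi \in \C^\infty_c(\Omega)$ one has $\gamma(\Phi) = 0$ and $\dot\varphi = 0$, so $P^\perp \gamma(\Phi) = 0$ trivially, hence $\Phi \in \D_U$. For any $\Psi \in \D_U \subset \H^1(\Omega)$ the usual integration by parts (which holds because $\Psi \in \H^1$ and $\Phi \in \C^\infty_c$) gives
\[
  Q_U(\Psi,\Phi) = \scalar{\d\Psi}{\d\Phi} - \scalarb{\gamma(\Psi)}{A_U\gamma(\Phi)} = \scalar{\Psi}{-\Delta_\eta \Phi},
\]
the boundary term vanishing because $\gamma(\Phi)=0$. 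The identity then extends to all $\Psi \in \D(\overline{Q}_U)$: the left-hand side is continuous in $\Psi$ with respect to $\normm{\cdot}_{Q_U}$ by definition of the closure, and the right-hand side is continuous in $\Psi$ with respect to $\norm{\cdot}$, which is dominated by the form norm. By Kato's representation theorem (i) this proves $\Phi \in \D(T)$ and $T\Phi = -\Delta_\eta \Phi$.

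Second, I would extend from $\C^\infty_c(\Omega)$ to all of $\H^2_0(\Omega)$ by a closedness argument. Given $\Phi \in \H^2_0(\Omega)$ choose $\{\Phi_n\}\subset\C^\infty_c(\Omega)$ with $\Phi_n \to \Phi$ in $\norm{\cdot}_2$. Then $-\Delta_\eta \Phi_n \to -\Delta_{\mathrm{min}}\Phi$ in $\H^0(\Omega)$, while $\normm{\cdot}_{Q_U}$ is controlled by $\norm{\cdot}_1$ (combining the semi-boundedness of Theorem~\ref{maintheorem1} with the bound of Proposition~\ref{H1bound}) and hence by $\norm{\cdot}_2$. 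Thus $\{\Phi_n\}$ is Cauchy in the form norm, so $\Phi \in \D(\overline{Q}_U)$, and by the first step $T\Phi_n = -\Delta_\eta\Phi_n$ converges in $\H^0(\Omega)$ to $-\Delta_{\mathrm{min}}\Phi$. Closedness of the self-adjoint operator $T$ then yields $\Phi \in \D(T)$ and $T\Phi = -\Delta_{\mathrm{min}}\Phi$, which is exactly the assertion that $T$ extends $-\Delta_{\mathrm{min}}$.

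There is no serious analytic obstacle here beyond careful book-keeping: the genuine work has already been done in Theorems~\ref{maintheorem1} and \ref{maintheorem2}, which guarantee that Kato's theorem applies. The only subtle point is to ensure that the identity $Q_U(\Psi,\Phi)=\scalar{\Psi}{-\Delta_\eta\Phi}$ is promoted from $\Psi\in\D_U$ to $\Psi\in\D(\overline{Q}_U)$ \emph{before} invoking the representation theorem, since it is the closed form (not the initial one on $\D_U$) that Kato's theorem pairs with the representing operator $T$.
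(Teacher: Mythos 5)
Your proposal is correct and follows essentially the same route as the paper: both verify the Kato characterization of $\D(T)$ by integrating by parts to kill the boundary term and then promote the identity from $\Psi\in\D_U$ to $\Psi\in\D(\overline{Q}_U)$ using the core property. The only difference is that the paper works directly with $\Phi\in\H^2_0(\Omega)\subset\D_U$ (where both $\varphi$ and $\dot\varphi$ vanish), whereas you first treat $\Phi\in\C^\infty_c(\Omega)$ and then pass to $\H^2_0(\Omega)$ via closedness of $T$ --- a slightly more careful but equivalent bookkeeping of the same argument.
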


\begin{proof}
By Theorem \ref{fundteo} we have that $\Phi\in\D(T)$ iff $\Phi\in\D(\overline{Q}_U)$ and it exists $\chi\in\H^0(\Omega)$
such that $$Q_U(\Psi,\Phi)=\scalar{\Psi}{\chi}\quad\forall\Psi \in \D(\overline{Q}_U)\;.$$
Let $\Phi\in\H^2_0(\Omega)\subset\D_U$ and $\Psi\in\D_U$. Then
\begin{align*}
Q(\Psi,\Phi)&=\scalar{\d\Psi}{\d\Phi}-\scalarb{\psi}{A\varphi}\\
&=\scalar{\Psi}{-\Delta_{\mathrm{min}}\Phi}+\scalarb{\psi}{\dot{\varphi}}-\scalarb{\psi}{A\varphi}\\
&=\scalar{\Psi}{-\Delta_{\mathrm{min}}\Phi}\;.
\end{align*}
Since $\D_U$ is a core for $\overline{Q}_U$ and $\D(\overline{Q}_U)\subset\H^0(\Omega)$
the above equality holds also for every $\Psi\in\D(\overline{Q}_U)$.
Therefore $\D(\Delta_{\mathrm{min}})=\H^2_0(\Omega)\subset\D(T)$ and moreover $T|_{\D(\Delta_{\mathrm{min}})}=-\Delta_{\mathrm{min}}$.
\end{proof}

\section{Examples}\label{sec:examples}

In this section we introduce some examples that show that the characterization of the quadratic forms of Section \ref{sec:class}
and Section \ref{sec:closable and semibounded qf} include a large class of possible self-adjoint extensions of the Laplace-Beltrami operator.
This section also illustrates the simplicity in the description of extensions using admissible unitaries at the boundary.

As the boundary manifold $\pO$ is an $(n-1)$-dimensional, smooth manifold, there always exist a
$(n-1)$-simplicial complex  $\mathcal{K}$ and a smooth diffeomorphism $f:\mathcal{K}\to\pO$ such that $f(\mathcal{K})=\pO$, cf., \cite{Wh40,Wh57}. Any simplex in the complex is
diffeomorphic to a reference polyhedron $\Gamma_0\subset\mathbb{R}^{n-1}$.
The simplicial complex $\mathcal{K}$ defines therefore a triangulation of the boundary $\pO=\cup_{i=1}^N\Gamma_i$, where $\Gamma_i:=f(A_i)$,
$A_i\in\mathcal{K}$. For each element of the triangulation $\Gamma_i$ it exists a diffeomorphism $g_i:\Gamma_0\to\Gamma_i$.
Consider a reference Hilbert space $\H^0(\Gamma_0,\d\mu_0)$, where $d\mu_0$ is a fixed smooth volume element.
Each diffeomorphism $g_i$ defines a unitary transformation as follows:
\begin{definition}\label{def unitary transformation}
Let $|J_i|$ be the Jacobian determinant of the transformation of coordinates given by the diffeomorphism $g_i:\Gamma_0\to\Gamma_i$\,.
Let $\mu_i\in\C^\infty(\pO)$ be the proportionality factor $g_i^{\star}\d\mu_{\partial\eta}=\mu_i\d\mu_0$\,,
where $g_i^{\star}$ stands for the pull-back of the diffeomorphism. The unitary transformation
$T_i:\H^0(\Gamma_i,\d\mu_{\partial\eta})\to\H^0(\Gamma_0,\d\mu_0)$ is defined by
\begin{equation}\label{unitary transformation}
T_i\Phi:=\sqrt{|J_i|\mu_i}(\Phi\circ g_i)\;.
\end{equation}
\end{definition}
We show that the transformation above is unitary. First note that $T$ is invertible. It remains to
show that $T$ is an isometry:
\begin{align*}
\scalar{\Phi}{\Psi}_{\Gamma_i}&=\int_{\Gamma_i}\overline{\Phi}\Psi\d\mu_{\partial \eta}\\
&=\int_{\Gamma_0}(\overline{\Phi\circ g_i})(\Psi\circ g_i)|J_i|g_i^{\star}\d\mu_{\partial \eta}\\
&=\int_{\Gamma_0}(\overline{\Phi\circ g_i})(\Psi\circ g_i)|J_i|\mu_i \d\mu_0=\scalar{T_i\Phi}{T_i\Psi}_{\Gamma_0}\;.
\end{align*}

\begin{example}\label{periodic}
Consider that the boundary of the Riemannian manifold $(\Omega,\pO,\eta)$ admits a triangulation of two elements,
i.e., $\pO=\Gamma_1\cup\Gamma_2$\,. The Hilbert space of the boundary satisfies
$\H^0(\pO)=\H(\Gamma_1\cup\Gamma_2)\simeq \H^0(\Gamma_1)\oplus\H^0(\Gamma_2)$.
The isomorphism is given explicitly by the characteristic functions $\chi_i$ of the submanifolds $\Gamma_i$, $i=1,2$\,.
Modulo a null measure set we have that $$\Phi=\chi_1\Phi+\chi_2\Phi\;.$$ We shall define unitary operators $U=\H^0(\pO)\to\H^0(\pO)$
that are adapted to the block structure induced by the latter direct sum:
$$U=\begin{bmatrix} U_{11} & U_{12} \\ U_{21} & U_{22} \end{bmatrix}\;,$$
where $U_{ij}:\H^0(\Gamma_{j})\to\H^0(\Gamma_{i})$\,. Hence consider the following unitary operator
\begin{equation}\label{Eqperiodic}
U=\begin{bmatrix} 0 & T_1^*T_2 \\ T_2^*T_1 & 0 \end{bmatrix}\;,
\end{equation}
where the unitaries $T_i$ are defined as in Definition \ref{def unitary transformation}. Clearly, $U^2=\mathbb{I}$, and therefore the spectrum of $U$ is $\sigma(U)=\{-1,1\}$ with the corresponding orthogonal projectors given by $$P^{\bot}=\frac{1}{2}(\mathbb{I}-U)\;,$$ $$P=\frac{1}{2}(\mathbb{I}+U)\;.$$ The partial Cayley transform $A_U$ is in this case the null operator, since $P(\mathbb{I}-U)=0$. The unitary operator is therefore admissible and the corresponding quadratic form will be closable. The domain of the corresponding quadratic form $Q_U$ is given by all the functions $\Phi\in\H^1(\Omega)$ such that $P^\bot\gamma(\Phi)=0$, which in this case becomes
\begin{equation}
P^\bot\gamma(\Phi)=\frac{1}{2}
\begin{bmatrix}
\mathbb{I}_1 & -T_1^*T_2 \\ -T_2^*T_1 & \mathbb{I}_2
\end{bmatrix}\begin{bmatrix} \chi_1\gamma(\Phi) \\ \chi_2 \gamma(\Phi) \end{bmatrix}=
\begin{bmatrix}
\chi_1\gamma(\Phi)-T_1^*T_2\chi_2\gamma(\Phi)\\
-T_2^*T_1\chi_1\gamma(\Phi)+\chi_2\gamma(\Phi)
\end{bmatrix}=0\;.
\end{equation}
We can rewrite the last condition as
\begin{equation}
T_1(\chi_1\gamma(\Phi))=T_2(\chi_2\gamma(\Phi))\;.
\end{equation}	
More concretely, this boundary conditions describe generalized periodic boundary conditions identifying the two
triangulation elements of the boundary with each other. The unitary transformations $T_i$ are necessary to make the triangulation
elements congruent. In particular,
if $(\Gamma_1,\eta_1)$ and $(\Gamma_2,\eta_2)$ are isomorphic as Riemannian manifolds then one can
recover the standard periodic boundary conditions.
\end{example}

\begin{example}\label{ex:quasiperiodic}
Consider the same situation as in the previous example but with the unitary operator replaced by
\begin{equation}\label{Eqquasiperiodic}
U=\begin{bmatrix} 0 & T_1^*e^{i\alpha}T_2 \\ T_2^*e^{-i\alpha}T_1 & 0 \end{bmatrix}\,,\quad \alpha\in \C^\infty(\Gamma_0)\;.
\end{equation}
In this case we have also that $U^2=\mathbb{I}$ and the calculations of the previous example can be applied step by step.
More concretely $P^\bot=(I-U)/2$ and the partial Cayley transform also vanishes. The boundary condition becomes in this case
\begin{equation}\label{quasiperiodic}
T_1(\chi_1\gamma(\Phi))=e^{i\alpha}T_2(\chi_2\gamma(\Phi))\;.
\end{equation}
This boundary conditions can be called generalized, quasiperiodic boundary conditions. For simple geometries and constant function
$\alpha$ these are the boundary conditions that define the periodic Bloch functions.
\end{example}

The condition $\alpha\in \C^\infty(\Gamma_0)$ in the example above can be relaxed. First we will show that the isometries $T_i$
do preserve the regularity of the function.

\begin{proposition}\label{prop regularity}
Let $T_i$ be a unitary transformation as given by Definition \ref{def unitary transformation}. Let $\Phi\in\H^k(\Gamma_i)$, $k\geq0$.
Then $T_i\Phi\in\H^k(\Gamma_0)$.
\end{proposition}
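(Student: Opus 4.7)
The plan is to factor $T_i$ as a composition of two operators, each of which is known to be bounded on $\H^k$, and then invoke standard Sobolev calculus. Write $T_i = M_{\rho_i} \circ C_{g_i}$, where $C_{g_i}\colon \Phi \mapsto \Phi \circ g_i$ is the pullback by the diffeomorphism $g_i\colon \Gamma_0 \to \Gamma_i$, and $M_{\rho_i}$ is multiplication by the weight $\rho_i := \sqrt{|J_i|\mu_i}$. Since $g_i$ is a smooth diffeomorphism between the compact manifolds $\Gamma_0$ and $\Gamma_i$, both $|J_i|$ and $\mu_i$ are smooth and strictly positive on $\Gamma_0$, hence $\rho_i \in \C^\infty(\Gamma_0)$ and is bounded away from zero with all its derivatives bounded.

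First I would handle $C_{g_i}$. For smooth $\Phi \in \C^\infty(\Gamma_i)$, the chain rule together with compactness of $\Gamma_0$ and smoothness of $g_i$ gives pointwise bounds $|D^\alpha(\Phi\circ g_i)| \leq C_\alpha \sum_{|\beta|\leq|\alpha|} |(D^\beta\Phi)\circ g_i|$. Integrating and making the change of variables $\bt \mapsto g_i(\bt)$ (whose Jacobian is smooth and bounded above and below), one obtains $\norm{\Phi\circ g_i}_{\H^k(\Gamma_0)} \leq C\,\norm{\Phi}_{\H^k(\Gamma_i)}$ for every integer $k \geq 0$. Density of smooth functions then extends $C_{g_i}$ to a bounded map $\H^k(\Gamma_i) \to \H^k(\Gamma_0)$. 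Fractional orders (if needed in the applications later) follow by interpolation between consecutive integer orders, or equivalently by observing that the intrinsic definition via $(I-\Delta_{\partial\eta})^{k/2}$ is preserved under diffeomorphisms up to an equivalence of norms.

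Next, for $M_{\rho_i}$, the standard Leibniz-type inequality on compact manifolds shows that multiplication by any element of $\C^\infty(\Gamma_0)$ defines a bounded operator on $\H^k(\Gamma_0)$: for smooth $\Psi$, one has $\norm{\rho_i \Psi}_{\H^k(\Gamma_0)} \leq C_k(\rho_i)\,\norm{\Psi}_{\H^k(\Gamma_0)}$, where $C_k(\rho_i)$ depends on the sup-norms of derivatives of $\rho_i$ up to order $k$, which are finite by compactness. Again, density extends this to all of $\H^k(\Gamma_0)$.

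Composing the two bounded maps yields $T_i\Phi = \rho_i(\Phi\circ g_i) \in \H^k(\Gamma_0)$ with $\norm{T_i\Phi}_{\H^k(\Gamma_0)} \leq K_k\,\norm{\Phi}_{\H^k(\Gamma_i)}$ for a constant $K_k$ depending only on $g_i$ and $k$, which gives the claim. The only delicate point, and the one I would take most care with, is to be clear about the definition of $\H^k(\Gamma_i)$ when $\Gamma_i$ is a piece of the boundary triangulation rather than a boundaryless manifold: here one uses the intrinsic definition via an ambient boundaryless manifold as in Definition~\ref{DefSobolev2}, and the estimates above go through verbatim since they are local and diffeomorphism-invariant.
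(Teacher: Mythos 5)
Your proposal is correct and follows essentially the same route as the paper: both factor $T_i$ into the pullback by the diffeomorphism $g_i$ (whose boundedness on $\H^k$ the paper simply cites from Adams--Fournier and Davies, while you sketch it via the chain rule and a change of variables) followed by multiplication by the smooth positive weight $\sqrt{|J_i|\mu_i}$. The only cosmetic difference is in the multiplication step, where the paper manipulates the integral $\int\overline{f\Phi}(I-\Delta_{\tilde\eta})^k(f\Phi)\,\d\mu_{\tilde\eta}$ directly on a boundaryless ambient manifold whereas you invoke the standard Leibniz estimate; both arguments rest on the same reduction and yield the same conclusion.
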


\begin{proof}
It is well known, cf. \cite[Theorem 3.41]{Ad03} or \cite[Lemma 7.1.4]{davies:95}, that the pull-back of a function under a smooth diffeomorphism
$g:\Omega_1\to\Omega_2$ preserves the regularity of the function,
i.e., $g^{\star}\Phi\in\H^k(\Omega_1)$ if $\Phi\in\H^k(\Omega_2)$, $k\geq0$.
It is therefore enough to prove that multiplication by a smooth positive function also preserves the regularity.
According to Definition \ref{DefSobolev2} it is enough to prove it for a smooth, compact, boundaryless Riemannian manifold
$(\tilde{\Omega},\tilde{\eta})$ and to consider that $\Phi\in\C^\infty(\tilde{\Omega})$, since this set is dense in $\H^k(\tilde{\Omega})$.
Let $f\in\C^\infty(\tilde{\Omega})$\,.
\begin{align*}
\int_{\tilde{\Omega}} \overline{f\Phi}(I-\Delta_{\tilde{\eta}})^k(f\Phi)\d\mu_{\tilde{\eta}}
&\leq \sup_{\tilde{\Omega}}|f|\int_{\tilde{\Omega}}\overline{\Phi}(I-\Delta_{\tilde{\eta}})^k(f\Phi)\d\mu_{\tilde{\eta}}\\
&\leq \sup_{\tilde{\Omega}}|f|\int_{\tilde{\Omega}}\overline{(I-\Delta_{\tilde{\eta}})^k\Phi}f\Phi\d\mu_{\tilde{\eta}}\\
&\leq (\sup_{\tilde{\Omega}}|f|)^2\int_{\tilde{\Omega}}\overline{(I-\Delta_{\tilde{\eta}})^k\Phi}\Phi\d\mu_{\tilde{\eta}}<\infty\;.
\end{align*}
We have used Definition \ref{DefSobolev} directly and the fact that the operator $(I-\Delta_{\tilde{\eta}})^k$ is
essentially self-adjoint over the smooth functions.
\end{proof}

According to Proposition~\ref{prop regularity} we have that $T_i(\chi_i\gamma(\Phi))\in\H^{1/2}(\Gamma_0)$, $i=1,2$.
Therefore, to get nontrivial solutions for the expression \eqref{quasiperiodic}, the function $\alpha:\Gamma_0\to[0,2\pi]$
can be chosen such that $e^{i\alpha}T_2(\chi_2\gamma)\in\H^{1/2}(\Gamma_0)$. Since $\C^0(\Gamma_0)$ is a dense subset in $\H^{1/2}(\Gamma_0)$,
and pointwise multiplication is a continuous operation for continuous functions it is enough to consider $\alpha\in\C^0(\Gamma_0)$.

\begin{example}\label{generalized Robin}
Consider that the boundary of the Riemannian manifold $(\Omega,\pO,\eta)$ admits a triangulation of two elements like in the Example \ref{periodic}.
So we have that $\pO=\Gamma_1\cup\Gamma_2$\,. Consider the following unitary operator $U:\H^0(\pO)\to\H^0(\pO)$ adapted to the block structure
defined by this triangulation
\begin{equation}\label{EqRobin}
U=\begin{bmatrix} e^{\mathrm{i}\beta_1}\mathbb{I}_1 & 0\\ 0 & e^{\mathrm{i}\beta_2}\mathbb{I}_2 \end{bmatrix}\;,
\end{equation}
where $\C^0(\Gamma_i)\ni\beta_i:\Gamma_i\to [-\pi+\delta,\pi-\delta]$ with $\delta>0$. The latter condition guaranties that the unitary matrix
has gap at $-1$. Since the unitary is diagonal in the block structure, it is clear that $P^\bot=0$\,.
The domain of the quadratic form $Q_U$ is given in this case by all the functions $\Phi\in\H^1(\Omega)$ \,. The partial Cayley transform is
in this case the operator $A_U=\H^0(\pO)\to\H^0(\pO)$ defined by
\begin{equation}\label{partial Cayley Robin}
A_U=\begin{bmatrix} -\tan\frac{\beta_1}{2} & 0 \\ 0 & -\tan{\frac{\beta_2}{2}} \end{bmatrix}\;.
\end{equation}
A matrix like the one above will lead to self-adjoint extensions of the Laplace-Beltrami operator that verify generalized Robin type boundary
conditions $\chi_i\dot{\varphi}=-\tan\frac{\beta_i}{2}\chi_i\varphi$. Unfortunately, the partial Cayley transform does not satisfy the
admissibility condition in this case. Nevertheless, we will show that the quadratic form above is indeed closable.
\end{example}

Given a triangulation of the boundary $\pO=\cup_{i=1}^N\Gamma_i$ we can consider the Hilbert space that results of the direct sum of the corresponding Sobolev spaces. We will denote it as $$\oplus\H^k:=\oplus_{i=1}^{N}\H^{k}(\Gamma_i)\;.$$ Assuming that the partial Cayley transform verifies the condition $$\norm{A_U\gamma(\Phi)}_{\oplus\H^{1/2}}\leq K \norm{\gamma(\Phi)}_{\oplus\H^{1/2}}\;,$$ we can generalize Lemma \ref{Lemma approxdotphi} and Corollary \ref{Corclosable} as follows.

\begin{lemma}[Lemma $\ref{Lemma approxdotphi}^*$]\label{Lemma bis}
	Let $\Phi\in\H^1(\Omega)$, $f\in\oplus\H^{1/2}$. Then, for every $\epsilon>0$ it exists $\tilde{\Phi}\in\C^\infty(\Omega)$ such that $\norm{\Phi-\tilde{\Phi}}_1<\epsilon$, $\norm{\varphi-\tilde{\varphi}}_{\H^{1/2}(\pO)}<\epsilon$ and $\norm{f-\dot{\tilde{\varphi}}}_{\oplus\H^{1/2}}<\epsilon$\,.
\end{lemma}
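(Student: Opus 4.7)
The plan is to reduce this generalized statement to the original Lemma~\ref{Lemma approxdotphi} by first approximating the target data $f\in\oplus\H^{1/2}$ by a smooth function on the full boundary $\pO$, and then invoking the original construction. An element $f=(f_1,\dots,f_N)\in\oplus\H^{1/2}$ differs from an element of $\H^{1/2}(\pO)$ in that its components need not match across the interfaces $\Gamma_i\cap\Gamma_j$, so the reduction hinges on replacing each $f_i$ by a smooth approximation that vanishes near $\partial\Gamma_i$, which can then be glued smoothly with the neighboring pieces.

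The key technical ingredient is the density
\[
 \overline{\C^\infty_c(\mathrm{int}\,\Gamma_i)}^{\;\norm{\cdot}_{\H^{1/2}(\Gamma_i)}}=\H^{1/2}(\Gamma_i)\;,
\]
which holds precisely at the borderline exponent $s=1/2$ (the standard identity $\H^{1/2}_0=\H^{1/2}$; for $s>1/2$ well-defined traces obstruct this). Using this density, for each $i$ I would pick a sequence $f_i^{(k)}\in\C^\infty_c(\mathrm{int}\,\Gamma_i)$ with $\norm{f_i-f_i^{(k)}}_{\H^{1/2}(\Gamma_i)}\to 0$, and then define $f^{(k)}\in\C^\infty(\pO)\subset\H^{1/2}(\pO)$ by declaring it to equal $f_i^{(k)}$ on $\Gamma_i$ for every $i$. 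Because each $f_i^{(k)}$ is supported strictly inside its simplex, the pieces vanish in a full neighborhood of every interface and therefore fit together as a globally smooth function on $\pO$.

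Next, I would apply the original Lemma~\ref{Lemma approxdotphi} with data $f^{(k)}$ and an arbitrarily small tolerance $\epsilon_k>0$, producing $\tilde\Phi_k\in\C^\infty(\Omega)$ with $\norm{\Phi-\tilde\Phi_k}_1<\epsilon_k$, $\norm{\varphi-\tilde\varphi_k}_{\H^{1/2}(\pO)}<\epsilon_k$ and $\norm{f^{(k)}-\dot{\tilde{\varphi}}_k}_{\H^{1/2}(\pO)}<\epsilon_k$. Since the restriction maps $\H^{1/2}(\pO)\to\H^{1/2}(\Gamma_i)$ are continuous for every $i$, there exists $C>0$ with $\norm{\cdot}_{\oplus\H^{1/2}}\leq C\norm{\cdot}_{\H^{1/2}(\pO)}$, and the triangle inequality yields
\[
 \norm{f-\dot{\tilde{\varphi}}_k}_{\oplus\H^{1/2}}\leq \norm{f-f^{(k)}}_{\oplus\H^{1/2}}+C\,\epsilon_k\;.
\]
Choosing $k$ large and $\epsilon_k$ small makes this smaller than $\epsilon$, while the other two estimates on $\tilde\Phi_k$ transfer verbatim from Lemma~\ref{Lemma approxdotphi}, so $\tilde\Phi:=\tilde\Phi_k$ satisfies all three required bounds.

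The main obstacle is the density fact at $s=1/2$; it is standard but essential that we work exactly at the critical exponent, because only here can compactly supported smooth functions on each open simplex approximate an arbitrary $\H^{1/2}$ element and so be assembled by zero extension into a globally smooth function on $\pO$. Once this ingredient is granted, the remainder of the argument is a clean triangle-inequality reduction to the original lemma.
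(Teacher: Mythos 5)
Your proof is correct and follows essentially the same route as the paper, which simply reruns the construction of Lemma~\ref{Lemma approxdotphi} after noting that $\H^1(\pO)$ is dense in $\oplus\H^{1/2}$. Your version actually supplies the justification the paper leaves implicit --- the density of $\C^\infty_c(\mathrm{int}\,\Gamma_i)$ in $\H^{1/2}(\Gamma_i)$ at the critical exponent, which is exactly what allows discontinuities across the interfaces to be approximated by globally smooth boundary data --- and then cleanly invokes the original lemma as a black box.
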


\begin{proof}
 The proof of this lemma follows exactly the one for the Lemma \ref{Lemma approxdotphi}. It is enough to notice that the space $\H^1(\pO)$ is dense in $\oplus\H^{1/2}$.
\end{proof}

\begin{corollary}[Corollary $\ref{Corclosable}^*$]\label{Corollary bis}
Let $\{\Phi_n\}\subset\H^1(\Omega)$ and let $A_U$ be the partial Cayley transform of a unitary operator with gap at $-1$
such that $\norm{A\gamma(\Phi)}_{\oplus\H^{1/2}}\leq K \norm{\gamma(\Phi)}_{\oplus\H^{1/2}}$\,.
Then it exists a sequence of smooth functions $\{\tilde{\Phi}_n\}\in\C^{\infty}(\Omega)$ such that $\norm{\Phi_n-\tilde{\Phi}_n}_{\H^1(\Omega)}<\frac{1}{n}$\,,
$\norm{\varphi_n-\tilde{\varphi}_n}_{\H^{1/2}(\pO)}<\frac{1}{n}$\,, and
$\norm{\dot{\tilde{\varphi}}_n-A_U\tilde{\varphi}_n}_{\oplus\H^{1/2}}<\frac{1}{n}$\,.
\end{corollary}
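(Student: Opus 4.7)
The proof will follow the structure of Corollary~\ref{Corclosable} almost verbatim, with two replacements: Lemma~\ref{Lemma approxdotphi} is swapped for Lemma~bis just proved, and the admissibility bound on $A_U$ in the $\H^{1/2}(\pO)$ norm is swapped for the hypothesized bound in the direct-sum norm $\oplus\H^{1/2}$. The plan is a short three-step argument that fixes $n_0\in\mathbb{N}$, produces a smooth approximant $\tilde{\Phi}_{n_0}$ tailored so that its normal derivative matches the image of $\varphi_{n_0}$ under $A_U$ (up to error), and then closes the triangle inequality.

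For the setup, I would fix $n_0$ and set $f:=A_U\varphi_{n_0}$. Since $\Phi_{n_0}\in\H^1(\Omega)$, Theorem~\ref{LMtracetheorem} gives $\varphi_{n_0}\in\H^{1/2}(\pO)$; restriction to the triangulation elements embeds $\H^{1/2}(\pO)$ continuously into $\oplus\H^{1/2}$ with some constant $C>0$, so $\varphi_{n_0}\in\oplus\H^{1/2}$ and, by the new boundedness hypothesis, $f\in\oplus\H^{1/2}$ as well. Choose $\epsilon>0$ with $\epsilon(1+KC)\leq 1/n_0$ and apply Lemma~bis to the pair $(\Phi_{n_0},f)$ to obtain a $\tilde{\Phi}_{n_0}\in\C^\infty(\Omega)$ such that
\[
\norm{\Phi_{n_0}-\tilde{\Phi}_{n_0}}_1<\epsilon,\quad
\norm{\varphi_{n_0}-\tilde{\varphi}_{n_0}}_{\H^{1/2}(\pO)}<\epsilon,\quad
\norm{f-\dot{\tilde{\varphi}}_{n_0}}_{\oplus\H^{1/2}}<\epsilon.
\]
The first two inequalities of the Corollary are then immediate from $\epsilon\leq 1/n_0$.

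The third inequality is the only one requiring a computation. By the triangle inequality,
\[
\norm{\dot{\tilde{\varphi}}_{n_0}-A_U\tilde{\varphi}_{n_0}}_{\oplus\H^{1/2}}
\leq \norm{\dot{\tilde{\varphi}}_{n_0}-A_U\varphi_{n_0}}_{\oplus\H^{1/2}}
    +\norm{A_U(\varphi_{n_0}-\tilde{\varphi}_{n_0})}_{\oplus\H^{1/2}}.
\]
The first term on the right is $\norm{\dot{\tilde{\varphi}}_{n_0}-f}_{\oplus\H^{1/2}}<\epsilon$ by the choice of $f$ and Lemma~bis. For the second, the hypothesis gives $\norm{A_U(\varphi_{n_0}-\tilde{\varphi}_{n_0})}_{\oplus\H^{1/2}}\leq K\norm{\varphi_{n_0}-\tilde{\varphi}_{n_0}}_{\oplus\H^{1/2}}\leq KC\epsilon$, where $C$ is the continuity constant of the restriction $\H^{1/2}(\pO)\hookrightarrow\oplus\H^{1/2}$. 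Combining, the total is at most $(1+KC)\epsilon\leq 1/n_0$.

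The only nontrivial ingredient in the argument is the continuity of the restriction map $\H^{1/2}(\pO)\to\oplus\H^{1/2}$, which is needed to pass from the $\H^{1/2}(\pO)$-approximation supplied by Lemma~bis to the $\oplus\H^{1/2}$-bound required by the hypothesis on $A_U$; this is a standard consequence of the fact that the triangulation is smooth and the pieces $\Gamma_i$ are submanifolds with Lipschitz boundary, so that restriction to each $\Gamma_i$ is a bounded operator on the half-order Sobolev space. Apart from this bookkeeping, everything else is a direct transcription of the proof of Corollary~\ref{Corclosable}.
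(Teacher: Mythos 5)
Your proposal is correct and follows essentially the same route as the paper, which simply reruns the proof of Corollary~\ref{Corclosable} with Lemma~\ref{Lemma bis} supplying the approximant for $f=A_U\varphi_{n_0}\in\oplus\H^{1/2}$ and the hypothesized $\oplus\H^{1/2}$-bound on $A_U$ replacing the admissibility bound. In fact you are slightly more careful than the paper, which passes silently from the $\H^{1/2}(\pO)$-estimate on $\varphi_{n_0}-\tilde{\varphi}_{n_0}$ to the $\oplus\H^{1/2}$-norm; your explicit invocation of the continuity of the restriction map $\H^{1/2}(\pO)\to\oplus\H^{1/2}$ (with its constant $C$ absorbed into the choice of $\epsilon$) fills that small gap.
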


\begin{proof}
The proof is the same as for Corollary \ref{Corclosable} but now we take $\tilde{\Phi}_{n_0}$ as in Lemma \ref{Lemma bis} with
$f=A_U\varphi_{n_0}\in\oplus\H^{1/2}$\,.
\end{proof}

Now we can show that the quadratic forms $Q_U$ defined for unitary operators of the form appearing in Example~\ref{generalized Robin} are closable.
We show first that the partial Cayley transform of Equation \eqref{partial Cayley Robin} verifies the conditions of the
Corollary~\ref{Corollary bis} above. We have that
\begin{align*}
\norm{A_U\varphi}_{\oplus\H^{1/2}}^2&=\norm{A_U\chi_1\varphi}^2_{\H^{1/2}(\Gamma_1)}+\norm{A_U\chi_2\varphi}^2_{\H^{1/2}(\Gamma_2)}\\
&=\norm{\tan{\frac{\beta_1}{2}}\chi_1\varphi}^2_{\H^{1/2}(\Gamma_1)}+\norm{\tan{\frac{\beta_2}{2}}\chi_2\varphi}^2_{\H^{1/2}(\Gamma_2)}\\
&\leq K \left[\norm{\chi_1\varphi}^2_{\H^{1/2}(\Gamma_1)}+\norm{\chi_2\varphi}^2_{\H^{1/2}(\Gamma_2)}\right]=K\norm{\varphi}^2_{\oplus\H^{1/2}}\;.
\end{align*}
The last inequality follows from the discussion after Example \ref{ex:quasiperiodic} because the functions
$\beta_i\colon\Gamma_i\to[-\pi+\delta,\pi-\delta]$ are continuous. Take the sequence $\{\Phi_n\}\in\D_U$ as in the
proof of Theorem \ref{maintheorem2} and accordingly take $\{\tilde{\Phi}_n\}\in\C^{\infty}(\Omega)$ as in Corollary~\ref{Corollary bis}. Then we have that
\begin{align*}
|Q(\Phi)|&=\lim_{m\to\infty}\lim_{n\to\infty}\left|\scalar{\d\tilde{\Phi}_n}{\d\tilde{\Phi}_m}-\scalarb{\tilde{\varphi}_n}{A\tilde{\varphi}_m}\right|\\
&\leq\lim_{m\to\infty}\lim_{n\to\infty}\left[|\scalar{\tilde{\Phi}_n}{-\Delta_\eta\tilde{\Phi}_m}|+|\scalarb{\tilde{\varphi}_n}{\dot{\tilde{\varphi}}_m-A_U\tilde{\varphi}_m}|\right]\\
&=\lim_{m\to\infty}\lim_{n\to\infty}\left[|\scalar{\tilde{\Phi}_n}{-\Delta_\eta\tilde{\Phi}_m}|+|\smash{\sum_{i=1}^N}\scalar{\tilde{\varphi}_n}{\dot{\tilde{\varphi}}_m-A_U\tilde{\varphi}_m}_{\Gamma_i}|\right]\\
&\leq\lim_{m\to\infty}\lim_{n\to\infty}\sum_{i=1}^N|\scalar{\tilde{\varphi}_n}{\dot{\tilde{\varphi}}_m-A_U\tilde{\varphi}_m}_{\Gamma_i}|\\
&\leq\lim_{m\to\infty}\lim_{n\to\infty}\sum_{i=1}^N\norm{\chi_i\tilde{\varphi}_n}_{\H^{-1/2}(\Gamma_i)}\norm{\chi_i\dot{\tilde{\varphi}}_m-\chi_iA_U\tilde{\varphi}_m}_{\H^{1/2}(\Gamma_i)}\\
&\leq \lim_{m\to\infty}\lim_{n\to\infty} \norm{\tilde{\varphi}_n}_{\H^{-1/2}(\pO)} \smash{\sum_{i=1}^N}\norm{\chi_i\dot{\tilde{\varphi}}_m-\chi_iA_U\tilde{\varphi}_m}_{\H^{1/2}(\Gamma_i)}=0\;.
\end{align*}
We have used Definition \ref{DefSobolev2} and the structure of the scales of Hilbert spaces
$\H^{1/2}(\Gamma_i)\subset\H^0(\Gamma_i)\subset\H^{-1/2}(\Gamma_i)$\,. Hence, the unitary operators of Example \ref{generalized Robin} are closable.
In particular, this class of closable quadratic forms defines generalized Robin type boundary conditions $\dot{\varphi}=-\tan{\frac{\beta}{2}}\varphi$
where $\beta$ is allowed to be a piecewise continuous function with discontinuities at the vertices of the triangulation.

\begin{example}
Consider a unitary operator at the boundary of the form
\begin{equation}\label{Eq mixed}
	U=\begin{bmatrix} -\mathbb{I}_1 & 0\\ 0 & e^{\mathrm{i}\beta_2}\mathbb{I}_2 \end{bmatrix}\;,
\end{equation}
with $\beta_2:\Gamma_2 \to [-\pi+\delta,\pi-\delta]$ continuous. Again we need the condition $\delta>0$
in order to guaranty that the unitary matrix $U$ has gap at $-1$. In this case it is clear that
$$P^\bot=\begin{bmatrix} \mathbb{I}_1 & 0 \\ 0 & 0 \end{bmatrix}\;,$$ and that the partial Cayley transform becomes
$$A_U=\begin{bmatrix} 0 \\ -\tan{\frac{\beta_2}{2}} \end{bmatrix}\;.$$ This partial Cayley transform verifies the weaker
admissibility condition of the previous example and therefore defines a closable quadratic form too. This one defines a
boundary condition of the mixed type where
$$\chi_1\varphi=0\,,\quad\chi_2\dot{\varphi}=-\tan{\frac{\beta_2}{2}}\chi_2\varphi\;.$$
In particular when $\beta_2=0$ this mixed type boundary condition defines the boundary conditions of the so called
\emph{Zaremba problem} with $$\chi_1\varphi=0\,,\quad\chi_2\dot{\varphi}=0\;.$$
\end{example}

\begin{example}
Let $(\Omega,\pO,\eta)$ be a smooth, compact, Riemannian manifold.
Suppose that the boundary manifold admits a triangulation $\pO=\cup_{i=1}^N\Gamma_i$. Any unitary matrix that has
blockwise the structure of any of the above examples, i.e., Equations \eqref{Eqperiodic}, \eqref{Eqquasiperiodic},
\eqref{EqRobin} or \eqref{Eq mixed} leads to a closable, semi-bounded quadratic form $Q_U$.
\end{example}

\paragraph{\bf Acknowledgements:}
It is a pleasure to thank many useful conversations with Giuseppe Marmo and Manuel Asorey on this topic.
Part of this work was done while we were visiting the Department of Mathematics at UC Berkeley.
We would like to thank the hospitality and stimulating atmosphere during this visit.


\begin{thebibliography}{99.}

\bibitem{Ma01}
R.~Abraham, J.~Marsden and T.~Ratiu. \emph{Manifolds, Tensor Analysis and Applications. Second Edition},
Springer-Verlag, New York, 1988.

\bibitem{Ad03}
R.A.~Adams, J.J.F.~Fournier. \emph{Sobolev Spaces}. Pure and Applied Mathematics, Academic Press, 2003.

\bibitem{Arlinskii00}
Y.~Arlinskii.
\emph{Abstract boundary conditions for maximal sectorial extensions of sectorial operators}.
Math. Nachr., {\bf 209} (2000), 5--36.

\bibitem{AIM05}
M.~Asorey, A.~Ibort and G.~Marmo.
{\em Global theory of quantum boundary conditions and topology change}.
Int. J. Mod. Phys. A, \textbf{20} (2005), 1001--1025.

\bibitem{behrnd-langer-07}
J.~Behrndt and M.~Langer. {\em Boundary value problems for elliptic partial
differential operators on bounded domains}. J.~Funct.~Anal., {\bf 243} (2007), 536--565.

\bibitem{behrnd-langer-10}
J.~Behrndt and M.~Langer. {\em On the adjoint of a symmetric operator}.
J. London Math. Soc., {\bf 82} (2010), 563--580.

\bibitem{Be68} J.M. Berezanskii. \emph{Expansions in Eigenfunctions of Selfadjoint Operators}.
Translations of Mathematical Monographs, American Mathematical Society, 1968.

\bibitem{Be08}  M. V. Berry and M.R. Dennis.
{\em Boundary-condition-varying circle billiards and gratins: the Dirichlet singularity}.
J. Phys. A: Math. Theor., {\bf 41} (2008), 135203.

\bibitem{Be09}  M.V. Berry. {\em Hermitian boundary conditions at a Dirichlet singularity: the Marletta-Rozenblum model}.
J. Phys., A: Math. Theor., {\bf 42} (2009), 165208.


\bibitem{BGP08}
J.~Br\"uning, V.~Geyler and K.~Pankrashkin.
\emph{Spectra of self-adjoint extensions and applications to solvable Schr\"{o}dinger operators}.
Rev. Math. Phys., \textbf{20}(1) (2008), 1--70.

\bibitem{davies:95}
E.B.~Davies. \emph{Spectral theory and differential operators}. Cambridge
University Press, Cambridge, 1995.

\bibitem{Fr05}
C. Frey. \emph{On nonlocal boundary value problems for elliptic operators}.
Ph.D.~Thesis, University of K\"{o}ln, 2005.

\bibitem{Fr34}
K. Friedrichs. \emph{Spektraltheorie halbbeschr\"{a}nkter Operatoren und Anwendung auf die Spektralzerlegung von Differentialoperatoren}. Math. Ann., {\bf 110} (1934), 685--713.

\bibitem{Grubb68}
G. Grubb. \emph{A characterization of the nonlocal boundary value problems associated with an elliptic operator}.
Ann. Sc. Norm. Sup., {\bf 22}(3) (1968), 425--513.

\bibitem{Grubb73}
\bysame. \emph{Weakly semi-bounded boundary problems and sesquilinear forms}.
Ann. Inst. Fourier, Grenoble {\bf 23}(4) (1973), 145--194.

\bibitem{Grubb11}
\bysame. \emph{Spectral asymptotics for {R}obin problems with a discontinuous coefficient}. Journal of Spectral Theory, {\bf 1}(2) (2011), 155--177.

\bibitem{Hi76}
M.~W.~Hirsch. \emph{Differential Topology}. Graduate Texts in Mathematics, Springer, 1976.

\bibitem{IbPer2010}
A.~Ibort, J.M.~P\'erez-Pardo. \emph{Numerical Solutions of the Spectral Problem for arbitrary Self-Adjoint Extensions of the
One-Dimensional Schr\"odinger Equation}. SIAM J. Numer. Anal., \textbf{51} (2013), 1254--1279.

\bibitem{kato:95}
T.~Kato. \emph{Perturbation theory for linear operators}. Classics in
  Mathematics, Springer, 1995.

\bibitem{Ko75}
A.N.~Kochubei.
\emph{Extensions of symmetric operators and symmetric binary relations}. Math. Notes, {\bf17}(1) (1975), 25--28.

\bibitem{Ko99} V.~Koshmanenko.
\emph{Singular Quadratic Forms in Perturbation Theory}. Kluwer Academic Publishers, 1999.

\bibitem{Kovarik-Laptev-12}
H.~Kova\v{r}\'ik and A.~Laptev.
{\em Hardy inequalities for Robin Laplacians}.
J. Funct. Anal, {\bf 262} (2012), 4972--4985.


\bibitem{Li72}
J.L.~Lions and E.~Magenes. \emph{Nonhomogeneous boundary value problems and applications I}.
Grundlehren der mathematischen Wissenschaften, Springer, 1972.

\bibitem{lledo-post:07}
F.~Lled\'o and O.~Post. \emph{Generating spectral gaps by geometry}. Prospects
  in Mathematical Physics, Young Researchers Symposium of the 14th International Congress on Mathematical Physics, Lisbon, July 2003, Contemporary
  Mathematics, vol. 437 (2007), pp.~159--169.

\bibitem{lledo-post:08}
\bysame. \emph{Existence of spectral gaps, covering manifolds and residually
  finite groups}. Rev. Math. Phys., \textbf{20} (2008), 199--231.

\bibitem{lledo-post:08b}
\bysame. \emph{Eigenvalue bracketing for discrete and metric graphs}.
J. Math. Anal. Appl., \textbf{348} (2008), 806-833.

\bibitem{Ma09}
M.~Marletta and G. Rozenblum. {\em A {L}aplace Operator with Boundary Conditions Singular at one Point}. J. Phys. A: Math. Theor., {\bf 42} (2009), 125204.

\bibitem{Po81}
W.A.~Poor. \emph{Differential Geometric Structures}. McGraw-Hill, 1981.

\bibitem{post-lnm-12}
O.~Post. {\em Spectral Analysis on Graph-like Spaces}. Lecture Notes in Mathematics 2039,
Springer Verlag, Berlin, 2012.

\bibitem{post-13}
O.~Post. {\em Boundary pairs associated with quadratic forms}.
Preprint 2012, arXiv:math.FA/1210.4707.

\bibitem{Ne29}  J. von Neumann. {\em Allgemeine Eigenwerttheorie
Hermitischer Funktionaloperatoren}.  Math. Ann., {\bf 102} (1929), 49--131.


\bibitem{reed-simon-1}
M.~Reed and B.~Simon.
\emph{Methods of modern mathematical physics I: Functional analysis}. Academic Press, San Diego, 1980.

\bibitem{reed-simon-2}
\bysame. \emph{Methods of modern mathematical physics II:
Fourier Analysis, Self-adjointness}. Academic Press, San Diego, 1975.

 \bibitem{We80}
J.~Weidmann.
\emph{Linear Operators in Hilbert spaces}. Springer-Verlag, New York, 1980.

\bibitem{Wh40}
J.H.C.~Whitehead. {\em On ${C}^1$-Complexes}. Ann. of Math., {\bf 41}(4) (1940), 809--824.

\bibitem{Wh57}
H.~Whitney. {\em Geometric Integration Theory}. Princeton Mathematical Series. Princeton University Press, Princeton, New Jersey, 1957.

\end{thebibliography}
\end{document}